\newtheorem{theorem}{Theorem}[section]
\newtheorem{lemma}[theorem]{Lemma}
\newtheorem{corollary}[theorem]{Corollary}
\newtheorem{remark}[theorem]{Remark}
\DeclareMathOperator*{\esssup}{ess\,sup}
\newcommand{\D}{\mathbb{D}}
\newcommand\R{{\mathbb R}}
\newcommand{\linomega}{\zeta}
\title{Large-time behavior of the 2D thermally non-diffusive Boussinesq equations with Navier-slip boundary conditions}
\author[1]{Fabian Bleitner}
\affil[1]{McMaster University}
\author[2]{Elizabeth Carlson}
\affil[2]{California Institute of Technology}
\author[3]{Camilla Nobili}
\affil[3]{University of Surrey}
\date{}
\begin{document}

\maketitle

\begin{abstract}
This paper investigates the large-time behavior of a buoyancy-driven fluid without thermal diffusion under Navier-slip boundary conditions in a bounded domain with Lipschitz-continuous second derivatives. After establishing improved regularity for classical solutions, we analyze their large-time asymptotics. Specifically, we show that the solutions converge to a state where, as $t \rightarrow \infty$, $\|u\|_{W^{1,p}} \rightarrow 0$, and hydrostatic balance is achieved in the weak topology of $L^2$. Furthermore, we identify the necessary conditions under which stable stratification and hydrostatic balance can be achieved in the strong topology as time approaches infinity. We then analyze a particular steady state, the hydrostatic equilibrium, characterized by $ u = 0 $, $ \theta = \beta x_2 + \gamma $, and $ p = \frac{\beta}{2}x_2^2 + \gamma x_2 + \delta $. In a periodic strip, we establish the linear stability of this state for $\beta > 0$, indicating that the temperature is vertically stably stratified. This work builds upon the results in \cite{doeringWuZhaoZhen2018}, which focus on free-slip boundary conditions, as well as recent studies \cite{Aydin_Kukavica_Ziane_2023, aydin2024fractional} that address no-slip boundary conditions. Notably, the novelty of this study lies in the ability to directly bound the pressure term, made possible by the Navier-slip boundary conditions.
\end{abstract}

\section{Introduction}
In real-world phenomena, fluid behavior is influenced by external forces, boundary conditions, and various physical factors, including active scalars that characterize the flow. In the ocean, temperature and salinity are common examples of these scalars. This paper focuses on the Boussinesq approximation, where density variations due to temperature differences are retained only for generating buoyancy forces under the influence of gravity. Here, the active scalar (typically representing temperature or density), which diffuses and is advected by the fluid, affects the flow solely through buoyancy forces.
The Boussinesq system is given by
\begin{align*}
    u_t + u\cdot \nabla u - \nu \Delta u 
    &= 
    - \nabla p + \theta \hat{k}
    \\
    \nabla \cdot u &= 0
    \\
    \theta_t + u\cdot \nabla \theta 
    &=
    \mu \triangle \theta
    \\
    (u,\theta)(x,0)&=(u_0,\theta_0)(x)
\end{align*}
where $u$ is the velocity, $\theta$ is the active scalar, $\hat{k}$ represents the vertical direction ($\hat{k} = e_2$ in 2D, $\hat{k} = e_3$ in 3D), $\nu$ is the viscosity, and $\mu$ is the diffusivity of the active scalar.
In full space with $\nu, \mu >0$, the system has a unique global weak solution with initial data in $L^p$, with improved regularity of the solution when the initial data is smooth \cite{Cannon_DiBenedetto_1980}.
Limiting cases are natural extensions for study, allowing one to gain insight into dominating behaviors of the system.  For example, compressible ($\nabla \cdot u \neq 0$) adiabatic ($\mu = 0$) and inviscid ($\nu = 0$) flows are commonly considered in astrophysics (see, e.g., \cite{WoodBushby_2016}).

The particular limiting case we consider in this paper is the case where the fluid has an active scalar whose diffusivity is negligible, i.e. $\mu = 0$.  
In particular we consider the 2D incompressible, thermally non-diffusive Boussinesq equations
 \begin{align}
    u_t + u\cdot \nabla u - \nu \Delta u 
&=
    -\nabla p + \theta e_2 \label{navierStokes}
\\
    \nabla \cdot u &= 0 \label{incompressibility}
\\
    \theta_t +u\cdot \nabla \theta &= 0 \label{advection}
\\
    (u,\theta)(x,0)&=(u_0,\theta_0)(x) \label{initialData}
\end{align}
One physical understanding of the choice of $\mu = 0$ is the assumption that the system is adiabatic, that is, no heat (or scalar) is exchanged with the surroundings through diffusion, and transport occurs purely through fluid motion.
The motivation for studying this problem is closely related to boundary layer theory \cite{Li_Xu_Zhu_2016}. Additionally, in two dimensions, the structure of the fully inviscid and thermally non-diffusive system -- while at most locally well-posed (see \cite{Chae_Imanuvilov_1999, Chae_Kim_Nam_1999} and references therein) -- bears similarities to the structure of the three-dimensional Euler equations for axisymmetric swirling flows. This similarity leads to analogous mathematical challenges in obtaining estimates. For further discussion on this connection, see \cite{doeringWuZhaoZhen2018}, and for recent progress, refer to \cite{Li_Wang_2023} and references therein.
Several studies have addressed the system's well-posedness in full space, the torus \cite{Chae_Kim_Nam_1999, Hou_Li_2005, Chae_2006, Hmidi_Keraani_2007, Danchin_Paicu_2008, Larios_Lunasin_Titi_2013, Hu_Kukavica_Ziane_2015, Kukavica_Wang_Ziane_2016}, and bounded domains with smooth boundaries, considering both no-slip \cite{Lai_Pan_Zhao_2011, Hu_Kukavica_Ziane_2013} and free-slip conditions \cite{doeringWuZhaoZhen2018, Jang_Kim_2023}.

In this paper, we are interested in the Navier-slip boundary conditions in the tangential direction and no-penetration conditions in the normal direction, that is
 \begin{align}
    (\D u\ n + \alpha u)\cdot \tau &=0, \label{navSlip}
    \\
    u\cdot n &= 0\label{nonPenetration}
\end{align}
where $n$ is the unit outward normal vector, $\tau = n^\perp = (-n_2,n_1)$ is the corresponding tangent vector, and $\D u=\frac{1}{2}(\nabla u + \nabla u^T)$ is the symmetric gradient.
The Navier-slip boundary conditions serve as an ``interpolation" between free-slip and no-slip conditions. They are more physically realistic than free-slip conditions and, in many cases, provide a more accurate representation of fluid behavior compared to no-slip conditions (see, e.g., \cite{Lauga_Brenner_Stone_2007_no_slip_bc}).

Kelliher \cite{Kelliher_2006} extended results on the existence, uniqueness, and regularity of solutions for the 2D incompressible Navier-Stokes equations with Navier-slip boundary conditions, building on earlier work \cite{Clopeau_Mikelic_Robert_1998, LopesFilho_NussenzveigLopes_Planas_2005}.
The global existence and uniqueness of the partially dissipative Boussinesq system \eqref{navierStokes}-\eqref{initialData} on general bounded domains with Navier-slip boundary conditions for non-smooth initial data is proven in \cite{Hu_et_al_2018_GWP_partialDissipBoussinesq}.
Using techniques from \cite{bleitnerNobili} we follow the main strategy of \cite{doeringWuZhaoZhen2018}, who study the same system with free-slip boundary conditions, and prove higher regularity of the solutions under the same conditions as in \cite{Hu_et_al_2018_GWP_partialDissipBoussinesq}.
Our first main result concerns the regularity of the system.
\begin{theorem}[Regularity]
\label{theorem_well_posedness}
\leavevmode
\begin{itemize}
    \item (Uniform Regularity)
        Let $\Omega$ be a $C^{2,1}$-domain, $0<\alpha\in W^{1,\infty}(\partial\Omega)$, $u_0\in H^2(\Omega)$ satisfy the corresponding incompressibility and boundary conditions and $\theta_0\in L^{\tilde r}(\Omega)$ with $\tilde r\geq 4$. Then solutions of \eqref{navierStokes}-\eqref{navSlip} satisfy
        \begin{align}
            u &\in L^\infty\left((0,\infty);H^2(\Omega)\right) \cap L^p\left((0,\infty);W^{1,p}(\Omega)\right) \label{reg-u}
            \\
            u_t &\in L^\infty\left((0,\infty);L^2(\Omega)\right)\cap L^2\left((0,\infty);H^1(\Omega)\right) \label{reg-ut}
            \\
            p &\in L^\infty\left((0,\infty);H^1(\Omega)\right) \label{reg-p}
            \\
            \theta &\in L^\infty\big((0,\infty);L^r(\Omega)\big) 
        \end{align}
        for any $2\leq p <\infty$ and $1\leq r \leq \tilde r$.
    \item (Higher regularity)
        If additionally $\Omega$ is a simply connected $C^{3,1}$-domain, $0<\alpha\in W^{2,\infty}(\partial\Omega)$ and the initial data satisfies $\theta_0\in W^{1,\tilde q}$ with $\tilde q \geq 2$, then
        \begin{align*}
            u &\in L^2\left((0,T);H^3(\Omega)\right) \cap L^{\frac{2p}{p-2}}\left((0,T);W^{2,p}(\Omega)\right)
            \\
            \theta &\in L^\infty\left((0,T);W^{1,q}(\Omega)\right)
        \end{align*}
        for any $T>0$, $2\leq p <\infty$ and $1\leq q\leq \tilde q$.
\end{itemize}
\end{theorem}
In \cite{Hu_et_al_2018_GWP_partialDissipBoussinesq} the authors establish global well-posedness results for strong solutions in the class $u\in L^{\infty}\left((0,T); H^1(\Omega)\right)\cap L^2\left((0,T); H^2(\Omega)\right)$ and $\theta\in L^{\infty}\left((0,T); L^{\infty}(\Omega)\right)$ for any $T>0$. 
Using techniques from \cite{bleitnerNobili}, Theorem \ref{theorem_well_posedness} extends these results by showing that the crucial a-priori bounds hold universally in time in a space of higher regularity. Specifically we obtain $u \in L^\infty\left((0,\infty);H^2(\Omega)\right) \cap L^p\left((0,\infty);W^{1,p}(\Omega)\right)$ for any $p<\infty$. Additionally, under more restrictive conditions, we prove $ u \in L^2\left((0,T);H^3(\Omega)\right) \cap L^{\frac{2p}{p-2}}\left((0,T);W^{2,p}(\Omega)\right)$ for any $T>0$ and $2\leq p<\infty$.

\vspace{0.5cm}

Our next goal is to show large-time asymptotic behavior of the solution to \eqref{navierStokes}--\eqref{navSlip}.
In the last ten years, there have been numerous works studying the large-time asymptotic behavior of the Boussinesq system (with full and different combinations of partial dissipation or diffusion) in either 2 or 3 dimensions: 
For the case $\mu=0$ and $\nu>0$ several studies have addressed different configurations. In two-dimensional domains, results include cases with $C^1$ boundaries \cite{Lai_Pan_Zhao_2011}, periodic domains \cite{Biswas_Foias_Larios_2013, Tao_et_al_2020}, and $C^{2,\gamma}$
polygonal boundaries with free-slip conditions \cite{doeringWuZhaoZhen2018, Li_Wang_Zhao_2020}. Studies on systems with no-slip boundary conditions can be found in \cite{Li_Wang_Zhao_2020, Aydin_Kukavica_Ziane_2023}, while results for periodic strips are presented in \cite{Castro_Cordoba_Lear_2019, Chen_Li_2022}. Additional boundary conditions and small initial data have been explored in \cite{Dong_Sun_2023}, and results with lower bounds for the 2D torus, full space, and periodic strips are discussed in \cite{Kiselev_Park_Yao_2022}.
In the case where both $\mu>0$ and $\nu>0$, studies on the three-dimensional full space can be found in \cite{Brandolese_Schonbeck_2012, Weng_2016}.
For mixed viscosity and diffusivity, various results are available for two-dimensional systems \cite{Lai_Wu_Xu_et_al_2021, Wan_Chen_Chen_2022, Zhong_2022, Kang_Lee_Nguyen_2024}, while three-dimensional results can be found in \cite{Ma_Li_2023}.

In \cite{doeringWuZhaoZhen2018}, the authors considered free-slip boundary conditions, and analyze the large-time behavior of the perturbation near a particular steady state called the hydrostatic equilibrium 
and showed that the $L^2$ norm of the velocity field and its first order spatial and temporal derivatives converge to zero as $t\rightarrow \infty$. As a consequence, their result demonstrates that the pressure and concentration stratify in the vertical direction in the weak topology.
In \cite{Kukavica_Massatt_Ziane_2023} the authors assumed no-slip boundary conditions for $u$ on a bounded domain and showed that for data satisfying $(u_0, \theta_0)\in(H^2\cap H_0^1)\times H^1$ with $\nabla \cdot u_0=0$, one has 
$$\|\nabla u(t)\|_{L^2}\underset{t\rightarrow \infty}{\longrightarrow}0, \qquad \|Au(t)-\mathbb{P}(\theta(t)e_2)\|_{L^2}\underset{t\rightarrow \infty}{\longrightarrow}0 \quad \mbox{ and } \|Au(t)\|_{L^2}\leq C\,, $$
where $\mathbb{P}=Id+\nabla (-\Delta)^{-1}{\rm{div}}$ is the Leray projection and $A=-\mathbb{P}\Delta$ is the Stokes operator.  In the follow-up paper \cite{Aydin_Kukavica_Ziane_2023}, the authors proved that, as $t\rightarrow \infty$, $\mathbb{P}(\theta(t)e_2)$ weakly converges to $0$ in the class of $L^2$ vector fields with $\nabla\cdot u=0$ and $u\cdot n=0$ on $\partial\Omega$. Most recently, in the same setting the authors of \cite{aydin2024fractional} extend the results in \cite{Kukavica_Massatt_Ziane_2023} to initial data with different regularity and prove that
the concentration $\theta(t)$ achieves a steady state $\bar{\theta}$ if and only if $\lim_{t\rightarrow \infty}(Id-\mathbb{P})(\theta(t)e_2)=\bar{\theta}e_2$, with $\|\bar{\theta}\|_{L^2}=\|\theta_0\|_{L^2}$
Moreover, if such convergence to a limiting steady state holds, it is proved that $\|A u(t)\|_{L^2}\underset{t\rightarrow \infty}{\longrightarrow}0$ and  $\|\nabla p-\theta(t)e_2\|_{L^2}\underset{t\rightarrow \infty}{\longrightarrow}0\,$.

Inspired by the results in \cite{doeringWuZhaoZhen2018}, \cite{Aydin_Kukavica_Ziane_2023} and \cite{aydin2024fractional} and using techniques developed in \cite{bleitnerNobili} we prove the following

\begin{theorem}[Large-Time Behavior]\label{new-th-large_time}
Let $\Omega$ be a $C^{2,1}$-domain, $0<\alpha\in W^{1,\infty}(\partial\Omega)$, $u_0\in H^2(\Omega)$ satisfy the corresponding incompressibility and boundary conditions and $\theta_0\in L^{4}(\Omega)$. 
\begin{enumerate}
    \item \label{largetimetheorem_part_decay}
    As $t\to \infty$ we have 
    \begin{enumerate}
    \item  \begin{equation*}\label{largetimetheorem_decay_u}
     u(t)\to 0 \quad \text{ in }W^{1,q} \mbox{ for any } 1\leq q <\infty
     \end{equation*}
    \item
    \begin{equation*}\label{largetimetheorem_decay_ut}
    u_t(t)\to 0 \quad \text{ in }L^2
    \end{equation*}
    \item
    \begin{equation*} 
   \Delta u\rightharpoonup 0, \qquad\nabla p(t) - \theta(t) e_2\rightharpoonup 0 \quad \text{ in }L^2
    \label{largetimetheorem_decay_pTheta_weak}
    \end{equation*}
\end{enumerate}
    \item\label{largetimetheorem_part_decomposed}
     Let $\xi\in L^2$ with $\nabla\cdot \xi=0$ and $\xi\cdot n=0$ on $\partial\Omega$ and $\nabla \chi \in L^2$ with $\int_{\Omega} \chi=0$ be the divergence-free and curl-free part of the vector $\theta e_2$ respectively, i.e. $\theta e_2=\xi+\nabla \chi$. Then, as $t\rightarrow \infty$
    \begin{enumerate}
    \item
    \begin{equation*}
    \nabla p(t) - \nabla \chi(t) \to 0 \quad \text{ in }L^2
    \label{largetimetheorem_decay_p_grad_strong}
    \end{equation*}
    \item
    \begin{equation*}
    \xi(t) \rightharpoonup 0 \quad  \text{ in }L^2
    \label{largetimetheorem_decay_Projtheta_weak}
    \end{equation*}
    \item
    \begin{equation*}\label{largetimetheorem_decay_DeltaUprojTheta}
    \nu \Delta u(t) + \xi(t) \to 0 \quad  \text{ in }L^2
    \end{equation*}
\end{enumerate}
\item \label{largetimetheorem_part_assume}
Suppose that $\theta e_2$ converges to a steady state $\bar{\theta}$ in $L^2$. Then as $t\rightarrow \infty$
\begin{equation*}\label{full-conv-H2}
 \Delta u(t)\to 0 \quad \text{ in } L^2
\end{equation*}
and 
\begin{equation}\label{main-conv-results}
 \nabla p(t)-\theta(t) e_2\to 0 \quad \text{ in } L^2
 \end{equation}
\end{enumerate}
\end{theorem}

Notice that under the assumptions in part \ref{largetimetheorem_part_assume} we have $\|\xi\|_{L^2}\rightarrow 0$ as $t\rightarrow \infty$ (see the \hyperlink{proof_convergence_theorem_part_assumption}{proof} of Theorem \ref{new-th-large_time}). This implies that
$$\bar{\theta}e_2=\lim_{t\rightarrow \infty}\nabla \chi\,\qquad \mbox{in } L^2$$
which means that $\theta e_2$ must be curl free in the limit, and, in particular $0=\nabla\times(\bar{\theta}e_2)=\partial_1 \bar{\theta}$. The fact that $\bar{\theta}$ is independent of $x_1$ means that the fluid is vertically stratified, while the convergence $\|\nabla p-\theta(t)e_2\|_{L^2}\underset{t\rightarrow \infty}{\longrightarrow}0$ implies the hydrostatic balance is achieved (in the limit) in the strong topology. 
Parts \ref{largetimetheorem_part_decomposed} and \ref{largetimetheorem_part_assume} of our theorem were inspired by recent results in \cite{aydin2024fractional} and \cite{Kukavica_Massatt_Ziane_2023}.

We note that, unlike in \cite{aydin2024fractional} and \cite{Kukavica_Massatt_Ziane_2023}, we do not project our equations onto the space of divergence-free vector fields. While such a projection would simplify the analysis, it is not compatible with the boundary conditions considered here. Instead, taking advantage of the improved regularity properties of the flow under Navier-slip boundary conditions, we work directly with the equations and estimate the pressure, following ideas from \cite{drivasNguyenNobili2022,bleitnerNobili}. Additionally, under the assumptions of part \ref{largetimetheorem_part_assume}, we are able to demonstrate full convergence of the Laplacian (not just its divergence-free component) as a consequence of \eqref{largetimetheorem_decay_DeltaUprojTheta}.

One particular steady state that is worth discussing in this context is the hydrostatic equilibrium considered in \cite{doeringWuZhaoZhen2018}: Suppose for a moment that $\nu\neq 0$ and $\mu\neq 0$ and that we are looking for a solution of the form $(0, p_{\rm{he}}(x_2),\theta_{\rm{he}}(x_2))$. When formally substituting this Ansatz in \eqref{navierStokes} and \eqref{advection} we obtain $\partial_{x_2}p_{\rm{he}}(x_2)=\theta_{\rm{he}}(x_2)$ and $\partial_{x_2}^2 \theta(x_2)=0$ respectively. From the last equation we deduce that $\theta_{\rm{he}}(x_2)$ must be an affine function of the depth of the form $\theta(x_2)=\beta x_2+\gamma$.
In the case of the fully dissipative system ($\mu\neq 0$ and $\nu\neq 0$), under free-slip or no-slip boundary conditions an easy argument shows that the steady solution $(0, \beta x_2+\gamma)$ is globally asymptotically stable when $\beta>0$ (see Section 1.3 in \cite{doeringWuZhaoZhen2018}). This motivates us to study the stability of this Ansatz in the particular circumstances when the thermal diffusion is insignificant and the velocity satisfies Navier-slip boundary conditions on an arbitrary but regular domain. The asymptotic stability around the hydrostatic equilibrium of the solution of the Boussinesq system without molecular diffusivity has been studied under free-slip boundary conditions on a rectangular domain \cite{doeringWuZhaoZhen2018}, on the periodic strip $\mathbb{T}\times (0,1)$ \cite{Dong_2021} and on an infinite strip $\mathbb{R}\times (0,1)$ \cite{Dong_Sun_2022}. 

We perturb system \eqref{navierStokes}-\eqref{navSlip}, defining 
\begin{equation}
    \tilde u = u - u_{\textnormal{he}}, \quad
    \tilde\theta = \theta - \theta_{\textnormal{he}},\quad
    \tilde p = p - p_{\textnormal{he}}
\end{equation}
where 
\begin{equation}\label{HEq}
    u_{\textnormal{he}} = 0, \quad
    \theta_{\textnormal{he}} = \beta x_2 + \gamma, \quad
    p_{\textnormal{he}} = \frac{\beta}{2} x_2^2 + \gamma x_2 + \delta.
\end{equation}
Then $(\tilde u, \tilde\theta,  \tilde p)$
satisfy
\begin{align} 
    \tilde u_t + \tilde u\cdot \nabla \tilde u - \nu \Delta \tilde u 
&=
    -\nabla \tilde p + \tilde \theta e_2
    \label{tilde_navierStokes}
\\
    \nabla \cdot \tilde u &= 0
\\
    \tilde \theta_t +\tilde u\cdot \nabla \tilde \theta + \beta \tilde u_2 &= 0
    \\
    (\tilde u,\tilde \theta)(x,0)&=(\tilde u_0,\tilde \theta_0)(x)
    \\
    \tilde u \cdot n &= 0
    \\
    (\D \tilde u\ n + \alpha \tilde u)\cdot \tau&= 0
    \label{tilde_navSlip}
\end{align}
where $\tilde \theta_0 = \theta_0-\beta x_2+\gamma$ and $\tilde u_0=u_0$. Note that this system is exactly \eqref{navierStokes}-\eqref{navSlip}, just rewritten in the $\,\widetilde \cdot$ variables, and therefore Theorems \ref{theorem_well_posedness} and \ref{new-th-large_time} apply to \eqref{tilde_navierStokes}-\eqref{tilde_navSlip}.
Linearizing we obtain the system
\begin{align}
    U_t - \nu \Delta U + \nabla P &= \Theta e_2\label{linearized_pde_u}
    \\
    \nabla \cdot U &= 0
    \label{linearized_incompressible}
    \\
    \Theta_t +\beta U_2 &= 0 \label{linearized_pde_theta}
    \\
    (U,\Theta)(x,0)&=(U_0,\Theta_0)(x)
    \\
    U \cdot n &= 0
    \label{linearized_no_penetration}
    \\
    (\D U\ n + \alpha U)\cdot \tau&= 0 \label{linearized_navier_slip}
\end{align}
where we use capital letters to distinguish from the variables of the nonlinear system. The corresponding vorticity will be denoted by $\linomega$. Note that the eigenvalue problem for spatially periodic solutions to \eqref{linearized_pde_u}-\eqref{linearized_pde_theta} coincides with Theorem 1.4 (3) in \cite{doeringWuZhaoZhen2018}, implying instability when $\beta<0$. If however $\beta>0$ we are able to prove the linear stability of \eqref{linearized_pde_u}-\eqref{linearized_navier_slip}. Indeed, this makes physical sense as denser parcels of fluid sink while lighter parcels rise.

\begin{theorem}[Linearized System]
\label{theorem_linearStability}
Let $\Omega=\mathbb{T}\times(0,1)$, $\beta>0$, $\alpha\in \R$ and assume $\theta_0\in H^2(\Omega)$ and $U_0\in H^2(\Omega)$ satisfy the corresponding incompressibility and boundary conditions.
\begin{itemize}
    \item (Regularity)
        Then solutions of \eqref{linearized_pde_u}-\eqref{linearized_navier_slip} satisfy
        \begin{align}
            U&\in L^\infty\left((0,\infty);H^2(\Omega)\right) \cap L^2\left((0,\infty);H^3(\Omega)\right)\label{stabilityTheorem_reg_U}
            \\
            U_t&\in L^\infty\left((0,\infty);H^1(\Omega)\right) \cap L^2\left((0,\infty);H^2(\Omega)\right)
            \\
            \Theta &\in L^\infty\left((0,\infty);H^2(\Omega)\right)
            \\
            P &\in L^\infty\left((0,\infty);H^1(\Omega)\right)
            \\
            P_t &\in L^\infty\left((0,\infty);H^1(\Omega)\right) \cap L^2\left((0,\infty);H^1(\Omega)\right).
        \end{align}
    \item (Linear Stability)
        Additionally the hydrostatic equilibrium is stable in the sense that
        \begin{align}
            \|U(t)\|_{H^2} &\to 0
            \label{stabilityTheorem_decay_U}
            \\
            \|\nabla P(t) - \Theta (t) e_2 \|_{L^2} &\to 0
            \label{stabilityTheorem_decay_ThetaP}
        \end{align}
        for $t\to\infty$.
\end{itemize}
\end{theorem}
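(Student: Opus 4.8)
The plan is to prove this in two stages: first establish the regularity estimates (which, as with Theorems 1.1 and 1.2, follow from energy-type estimates on the linearized system, now much simpler since the nonlinearity is absent), and then extract the decay. The linearized system has a natural conserved/dissipated energy structure: multiplying \eqref{linearized_pde_u} by $U$, integrating over $\Omega$, using $\nabla\cdot U=0$ and the Navier-slip boundary conditions \eqref{linearized_navier_slip} to handle the viscous term (this produces the usual Korn-type boundary term $\int_{\partial\Omega}\alpha |U_\tau|^2$, which has a good sign since $\alpha>0$), and multiplying \eqref{linearized_pde_theta} by $\Theta/\beta$, the buoyancy coupling terms $\int \Theta U_2$ cancel. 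This yields
\begin{align*}
    \frac{1}{2}\frac{d}{dt}\left(\|U\|_{L^2}^2 + \frac{1}{\beta}\|\Theta\|_{L^2}^2\right) + 2\nu\|\D U\|_{L^2}^2 + \nu\int_{\partial\Omega}\alpha|U_\tau|^2\,d\sigma = 0,
\end{align*}
so $\|U\|_{L^2}^2 + \beta^{-1}\|\Theta\|_{L^2}^2$ is nonincreasing and $\int_0^\infty \|\D U\|_{L^2}^2\,dt<\infty$; by a Korn inequality adapted to the Navier-slip boundary condition this controls $\int_0^\infty\|U\|_{H^1}^2\,dt$. This is exactly where $\beta>0$ is essential — the weight $1/\beta$ must be positive for the combined quantity to be a norm. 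The higher-order bounds \eqref{stabilityTheorem_reg_U} come from differentiating the equations in time (using that $U_t$ satisfies the same linear system with forcing $\Theta_t e_2 = -\beta U_2 e_2$) and from elliptic regularity for the Stokes problem with Navier-slip boundary conditions on the $C^{3,1}$, simply connected domain, together with $\Theta\in L^\infty(H^1)$ which follows since $\Theta_t = -\beta U_2$ and one can bound $\nabla\Theta$ via $\partial_t\nabla\Theta = -\beta\nabla U_2$ and the integrability of $\|U\|_{H^1}$ — though here a uniform-in-time $H^1$ bound on $\Theta$ needs the decay of $U$, so this step and the decay step are somewhat intertwined.

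For the decay \eqref{stabilityTheorem_decay_U}, the strategy is the standard one for such gradient-flow-like systems: combine the integrability $\int_0^\infty\|U\|_{H^1}^2\,dt<\infty$ with a uniform bound on $\frac{d}{dt}\|U\|_{H^1}^2$ (or on $\|U_t\|$ in a suitable norm) to conclude $\|U(t)\|_{H^1}\to 0$. To upgrade to $H^2$, I would use the Stokes elliptic estimate $\|U\|_{H^2}\lesssim \|U_t\|_{L^2} + \|\Theta\|_{L^2} + \|U\|_{H^1}$ (schematically, with lower-order terms from the boundary condition), so it suffices to show $\|U_t(t)\|_{L^2}\to 0$. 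For that, run the same energy argument on the time-differentiated system: $U_t$ solves a Stokes system with forcing $-\beta U_2 e_2$, and $\Theta_t=-\beta U_2$, so testing with $U_t$ and $\Theta_t/\beta$ gives $\frac{1}{2}\frac{d}{dt}(\|U_t\|_{L^2}^2 + \beta^{-1}\|\Theta_t\|_{L^2}^2) + 2\nu\|\D U_t\|^2 + \nu\int_{\partial\Omega}\alpha|U_{t,\tau}|^2 = 0$ — wait, the forcing and the $\Theta_t$ equation again conspire so the coupling cancels, leaving $\|U_t\|_{L^2}^2+\beta^{-1}\|\Theta_t\|_{L^2}^2$ monotone and $\int_0^\infty\|\D U_t\|^2\,dt<\infty$, whence $\|U_t\|_{H^1}\in L^2$ and, combined with monotonicity (or a bound on its derivative), $\|U_t(t)\|_{L^2}\to 0$. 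Then \eqref{stabilityTheorem_decay_U} follows, and \eqref{stabilityTheorem_decay_ThetaP} follows since $\nabla P - \Theta e_2 = \nu\Delta U - U_t$ in $L^2$, both terms of which go to zero.

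The main obstacle I anticipate is not the energy cascade itself but the careful treatment of the boundary terms and the pressure. On a domain with only Lipschitz second (resp. third) derivatives, integrating by parts in the viscous term against $U$ and especially against $U_t$ requires knowing $U_t$ satisfies the Navier-slip condition (it does, by differentiating \eqref{linearized_navier_slip} in time since $\alpha$ is time-independent), and one must verify the Korn-type inequality $\|U\|_{H^1}^2 \lesssim \|\D U\|_{L^2}^2 + \|U\|_{L^2}^2$ holds with the boundary integral — or, better, a Poincaré–Korn inequality eliminating the $\|U\|_{L^2}$ term using $\int_{\partial\Omega}\alpha|U_\tau|^2$ so that one gets exponential rather than merely asymptotic decay; I would aim for the qualitative statement as written, which only needs the weaker Korn inequality plus the $L^2$-in-time integrability. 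The pressure bounds $P\in L^\infty(H^1)$, $P_t\in L^\infty(H^1)\cap L^2(H^1)$ are recovered from the equation by solving, at each time, the elliptic problem for $P$ with the Neumann-type data coming from the normal component of \eqref{linearized_pde_u} on the boundary — the same mechanism used in the proof of Theorem 1.1, and I would simply invoke those estimates with the (now linear, hence gentler) right-hand side.
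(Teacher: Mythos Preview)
Your overall strategy---combined energy for $(U,\Theta)$, then for $(U_t,\Theta_t)$, then elliptic regularity---matches the paper's in spirit, and your observation that the time-differentiated system enjoys the \emph{same} cancellation (so $\|U_t\|_{L^2}^2+\beta^{-1}\|\Theta_t\|_{L^2}^2$ is nonincreasing and $\int_0^\infty\|\D U_t\|_{L^2}^2\,dt<\infty$) is correct and in fact tidier than what the paper writes out. However there is a genuine gap in your route to $\|U(t)\|_{H^2}\to 0$.

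You propose the Stokes estimate $\|U\|_{H^2}\lesssim \|U_t\|_{L^2}+\|\Theta\|_{L^2}+\|U\|_{H^1}$ and say ``it suffices to show $\|U_t(t)\|_{L^2}\to 0$''. But $\|\Theta(t)\|_{L^2}$ does \emph{not} decay---it is merely bounded (indeed $\|U\|_{L^2}^2+\beta^{-1}\|\Theta\|_{L^2}^2$ converges to a generally nonzero limit). So this estimate cannot yield $H^2$ decay, and since $\nabla P-\Theta e_2=\nu\Delta U-U_t$, you are equally stuck on \eqref{stabilityTheorem_decay_ThetaP}. The paper avoids this by working in the vorticity formulation: testing the linearized vorticity equation $\boldomega_t-\nu\Delta\boldomega=\partial_1\Theta$ with $\boldomega$ and carefully handling the boundary terms via \eqref{lin_vorticity_bc} and Lemma~\ref{lemma_nablaPcdotU_onBoundary} produces
\[
\nu\|\nabla\boldomega\|_{L^2}^2 \lesssim \big(\|U_t\|_{H^1}+\|P\|_{H^1}+\|\Theta\|_{H^1}\big)\,\|U\|_{H^1}+\|U\|_{H^1}^2.
\]
The crucial point is that the $\Theta$ contribution now appears \emph{multiplied by} $\|U\|_{H^1}$ (coming from $\int_\Omega\boldomega\,\partial_1\Theta\le\|\boldomega\|_{L^2}\|\nabla\Theta\|_{L^2}$ and boundary terms of the same flavour), so once $\|U_t\|_{H^1}$, $\|P\|_{H^1}$, $\|\Theta\|_{H^1}$ are shown to be uniformly bounded, the decay $\|U\|_{H^1}\to 0$ drags $\|U\|_{H^2}$ to zero. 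This is also why the paper needs $\Theta\in L^\infty((0,\infty);H^1)$ and $U_t\in L^\infty((0,\infty);H^1)$ before it can close the $H^2$ decay; the vorticity route (testing $\partial_t$ of the vorticity equation with $\boldomega_t$) is what delivers $U_t\in L^\infty(H^1)\cap L^2(H^2)$, a step your outline glosses over.

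On $\Theta\in L^\infty(H^1)$: you are right that integrating $\partial_t\nabla\Theta=-\beta\nabla U_2$ with only $\nabla U_2\in L^2(0,\infty;L^2)$ is delicate (Cauchy--Schwarz in time gives a factor $T^{1/2}$). The paper handles this by a telescoping/Jensen argument over unit time intervals rather than via decay of $U$, so the regularity and decay steps are not intertwined in the way you worried.
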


The regularity result follows in a similar fashion to that of the nonlinear system. 

We observe that Theorem \ref{new-th-large_time} shows, as $t\rightarrow \infty$, the solution converges to a state that satisfies the hydrostatic balance in the \textit{weak topology} of $L^2$. However $\theta$ does not necessarily converge to the linear profile $\beta x_2+\gamma$ in $L^2$. From our analysis we only deduce
\begin{equation*}
  \|\theta(t) - \beta x_2 - \gamma\|_{L^2}\to C \quad \mbox{ as } t\to \infty
\end{equation*}
(see Remark \ref{about-non-conv-theta}). We briefly remark that in all of the works previously cited, the only temperature steady state under consideration is actually a family of linear profiles, with no clear specification of the constants.  
To the best of the author's knowledge, the only study that explicitly investigates the exact equilibrium is \cite{Jang_Kim_2023}, where this profile is implicitly defined within the context of free-slip boundary conditions.
  However, the result is only proved for small initial data, and there is no explicit connection made to the linear profile.

The decay results \eqref{stabilityTheorem_decay_U} and \eqref{stabilityTheorem_decay_ThetaP} show that the hydrostatic equilibrium \eqref{HEq} is linearly stable whenever $\beta$ is positive. That is, perturbations about the hydrostatic equilibrium \eqref{HEq} of the linearized system converge to the hydrostatic equilibrium in strong norms.

The absence of nonlinear terms allows us to work solely in Hilbert spaces. The main difficulty for this problem stems from the absence of conservation laws and maximum principles for \eqref{linearized_pde_theta}. The convergence in \eqref{stabilityTheorem_decay_U} is based on the uniform boundedness in time of  $\|\nabla \Theta\|_{L^2}$. We are able to show this when $\Omega=\mathbb{T}\times(0,1)$, taking advantage of the periodicity in the horizontal direction. We want to stress that the bound we derive for $\nabla \Theta$ is not straightforward: We compute 
\begin{align}
   \frac{d}{dt}\left( \|\nabla \linomega\|_{L^2}^2 + \|\linomega\|_{L^2}^2 + \frac{1}{\beta} \|\nabla^2 \Theta\|_{L^2}^2 + \frac{1}{\beta}\|\nabla \Theta\|_{L^2}^2  + \frac{2}{\beta} \int_{\partial\Omega} \alpha (\partial_2 \Theta)^2\right)
\end{align}
(see \eqref{linearized_combination_a}) and show that, due to cancellations, this is bounded by $ \|U\|_{H^1}^2+\|U_t\|_{H^1}^2$, which, in turn, converges to zero uniformly in time. As a byproduct, we obtain a even higher order regularity, that is, $U\in L^2((0,\infty); H^3(\Omega))$. We believe that the same ideas can be employed to show the same result in the infinite strip $\Omega=\mathbb{R}\times(0,1)$.

We remark that, differently from the results of Dong \cite{Dong_2021} and Dong \& Sun \cite{Dong_Sun_2022,Dong_Sun_2023}, with the methods employed in this paper, we are not able to give explicit decay rates for the velocity, temperature and their derivatives.

The paper is structured as follows: Section \ref{section_proof_of_well_posedness} presents a series of consecutive a priori bounds leading to the \hyperlink{proof_of_theorem_well_posedness}{proof} of Theorem \ref{theorem_well_posedness}. Section \ref{section_large_time_behavior} establishes the convergence to hydrostatic equilibrium, as detailed in Theorem \ref{new-th-large_time}. Section \ref{section_linear_stability} focuses on the linearized system, providing the \hyperlink{proof_theorem_linearStability}{proof} of Theorem \ref{theorem_linearStability}. The Appendix includes \hyperlink{subsection:GradientEst}{gradient estimates} relevant to our geometry and boundary conditions, as well as \hyperlink{subsection:technicalLemmas}{technical lemmas} used for the main results.

\section{Proof of Theorem \ref{theorem_well_posedness}}\label{section_proof_of_well_posedness}

Before starting our analysis, we recall that the signed curvature $\kappa$ of a planar curve is defined as the rate of change of its tangent vector with respect to the curves arc length, i.e.
\begin{align}
    \label{def_kappa}
    (\tau \cdot \nabla) \tau = \kappa n.
\end{align}
For notational simplicity, since $u \cdot n = 0$ on the boundary, throughout the paper we will denote $u_\tau := u\cdot \tau$.

Unless otherwise stated, in the following sections we will always assume  
$$\Omega \mbox{ is a } C^{1,1}\mbox{-domain} \mbox{ and } 0<\alpha\in L^\infty(\partial\Omega) \mbox{  almost everywhere on } \partial\Omega.$$
Note that $\Omega$ is a $C^{k,1}$-domain if locally its boundary $\partial\Omega$ can be described as a $C^{k,1}$ map, i.e. the map is $k$-th order differentiable and each derivative is Lipschitz-continuous. For details see Section 1.2.1 in \cite{grisvard1985}.

 Finally, we remark that, as a solution of the transport equation, $\theta$ fulfills
\begin{align}
    \label{theta_conservation}
    \|\theta\|_{L^p}=\|\theta_0\|_{L^p}
\end{align}
for all $1\leq p \leq \infty$ if $\theta_0\in L^p$.

\subsection{Argument for \texorpdfstring{$u \in L^{\infty}((0,\infty); W^{1,p}(\Omega))$}{W1p-regularity} for \texorpdfstring{$p\geq 2$}{p bigger or equal 2}}\label{Sec:u-W1p_regularity}
We start by proving $H^1$-regularity by standard energy estimates. 
\begin{lemma}[Energy Bound]
\label{lemma_energy_bound}
Let $u_0,\theta_0\in L^2(\Omega)$. Then the energy is bounded by
\begin{equation*}
    \|u\|_{L^2} \leq e^{- \frac{\nu}{C} t}\|u_0\|_{L^2} + C \nu^{-1} \|\theta_0\|_{L^2}
\end{equation*}
for some constant $C = C(\alpha,\Omega)>0$.
\end{lemma}
\begin{proof}
Testing \eqref{navierStokes} with $u$,  we find
\begin{align}
\label{testingNSEwithU}
    \frac{1}{2}\frac{d}{dt} \|u\|_{L^2}^2 = - \int_\Omega u \cdot (u\cdot \nabla) u + \nu \int_\Omega u\cdot \Delta u - \int_\Omega u\cdot \nabla p + \int_\Omega u_2 \theta.
\end{align}
notice that by \eqref{nonPenetration} the first and the third term on the right-hand side of \eqref{testingNSEwithU} vanish under partial integration, which together with \eqref{lemma_uv_identities_second_id} implies
\begin{align*}
    \frac{1}{2}\frac{d}{dt} \|u\|_{L^2}^2 + 2 \nu \|\D u\|_{L^2}^2 + 2 \nu \int_{\partial\Omega}\alpha u_\tau^2 = \int_\Omega u_2 \theta.    
\end{align*}
Now Lemma \ref{lemma_coercivity} yields, for a constant $C>0$ depending only on $\alpha$ and $\Omega$,
\begin{align*}
    \frac{1}{2}\frac{d}{dt} \|u\|_{L^2}^2 + \frac{\nu}{C}\|u\|_{H^1}^2 &\leq \int_\Omega u_2 \theta
    \\
    &\leq \epsilon\|u\|_{L^2}^2 + \frac{1}{4\epsilon} \|\theta\|_{L^2}^2
\end{align*}
for any $\epsilon >0$. 
Choosing $\epsilon< \frac{\nu}{2C}$, \eqref{theta_conservation} and Gr{\"o}nwall's inequality yields
\begin{align}
    \frac{d}{dt} \|u\|_{L^2}^2 + \frac{\nu}{C}\|u\|_{H^1}^2 &\leq C \nu^{-1} \|\theta_0\|_{L^2}^2 
    \label{energy_decay_estimate_ode_uL2}
    \\
    \|u\|_{L^2}^2 &\leq e^{-\frac{\nu}{C} t}\|u_0\|_{L^2}^2 + C^2 \nu^{-2} \|\theta_0\|_{L^2}^2. \notag
\end{align}
\end{proof}

\begin{lemma}[\texorpdfstring{$H^1$}{H1}-Bound]
\label{lemma_L2H1_bound}
Let $u_0, \theta_0\in L^2(\Omega)$. Then $u\in L^2\left((0,\infty);H^1(\Omega)\right)$.
\end{lemma}
\begin{proof}
Let us define the new variables $\hat\theta = \theta - x_2$ and $\hat p = p - \frac{1}{2}x_2^2$. Then $u$, $\hat p$ and $\hat \theta$ solve
\begin{alignat}{2}
    u_t + u\cdot \nabla u - \nu \Delta u +\nabla \hat p &= \hat \theta e_2 &\qquad \text{ in } &\Omega \label{hat_navStokes}
    \\
    \nabla \cdot u &= 0 &\qquad \text{ in } &\Omega \label{hat_incompressibility}
    \\
    \hat\theta_t +u\cdot \nabla \hat \theta &= - u_2 &\qquad \text{ in } &\Omega\label{hat_advectionEquation}
    \\
    u \cdot n&=0 &\qquad \text{ on } &\partial\Omega \label{hat_nonPenetration}
    \\
    (\D u\ n + \alpha u)\cdot \tau  &= 0 &\qquad \text{ on } &\partial\Omega \label{hat_navSlip}
\end{alignat}
and testing \eqref{hat_navStokes} with $u$ and \eqref{hat_advectionEquation} with $\hat\theta$ we find
\begin{equation*}
    \begin{aligned}
        \frac{1}{2} \frac{d}{dt} \left( \|u\|_{L^2}^2 + \|\hat\theta\|_{L^2}^2\right)
        &= - \int_\Omega u \cdot (u\cdot \nabla ) u + \nu \int_\Omega u\cdot \Delta u - \int_\Omega \nabla \hat p \cdot u - \int_\Omega \hat \theta u\cdot \nabla \hat \theta.
    \end{aligned}
\end{equation*}
Notice that the first, third and fourth term on the right-hand side vanish under partial integration by \eqref{incompressibility} and \eqref{nonPenetration}. For the remaining diffusion term, \eqref{lemma_uv_identities_second_id} implies
\begin{align}
    \label{energy_identity}
    \frac{d}{dt} \left( \|u\|_{L^2}^2 + \|\theta- x_2\|_{L^2}^2\right) + 4 \nu \left(\|\D u\|_{L^2}^2 + \int_{\partial\Omega} \alpha u_\tau^2 \right) &= 0.
\end{align}
Integrating \eqref{energy_identity} in time,
\begin{equation*}
    \begin{aligned}
        \|u(t)\|_{L^2}^2 + \|\theta(t)- x_2\|_{L^2}^2 + 4 \nu  &\int_0^t \left(\|\D u(s)\|_{L^2}^2 + \int_{\partial\Omega} \alpha u_\tau^2(s) \right) \ ds
        \\
        &\qquad\qquad\qquad\qquad= \|u_0\|_{L^2}^2 + \|\theta_0 -x_2\|_{L^2}^2
    \end{aligned}
\end{equation*}
for all $t>0$, which by Lemma \ref{lemma_coercivity}, i.e. $\|u\|_{H^1}^2\leq C (\|\D u\|_{L^2}^2 + \int_{\partial\Omega} \alpha u_\tau^2)$, implies
\begin{align}
    \label{energy_int_uH1}
    \frac{\nu}{C} \int_0^t \|u(s)\|_{H^1}^2 \ ds \leq \|u_0\|_{L^2}^2 + \|\theta_0-x_2\|_{L^2}^2
\end{align}
with $C,\nu>0$, where the right-hand side of \eqref{energy_int_uH1} is independent of $t$ and therefore 
\begin{align}
    \label{uInL2H1}
    u\in L^2\left((0,\infty);H^1(\Omega)\right).
\end{align}
\end{proof}

Next, recall that the vorticity  $\omega =\nabla^\perp \cdot u$, where $\nabla^\perp=(-\partial_2,\partial_1)$, satisfies
\begin{alignat}{2}
    \omega_t + u\cdot \nabla \omega -\nu\Delta \omega &= \partial_1\theta & \qquad\text{ in }&\Omega\label{vorticity_eq}\\
    \omega &= -2(\alpha+\kappa) u_\tau & \qquad\text{ on }&\partial\Omega\label{vorticity_bc},
\end{alignat}
The boundary condition notice that as $\tau=(-n_2,n_1)$
\begin{equation}
    \label{vorticity_bc_derivation_1}
    \begin{aligned}
    \omega 
    &= \omega \tau\cdot \tau
    = \omega (-\tau_1 n_2 + \tau_2 n_1) 
    = 2 (\D u \ n)\cdot \tau - 2 n \cdot (\tau \cdot \nabla) u.
    \end{aligned}
\end{equation}
As $u$ is tangential to the boundary by \eqref{nonPenetration}, the last term in \eqref{vorticity_bc_derivation_1} can be rewritten as
\begin{align}
    \label{vorticity_bc_derivation_2}
    n\cdot (\tau\cdot\nabla) u = n\cdot (\tau\cdot\nabla) (u_\tau \tau)
    = \kappa u_\tau,
\end{align}
where in the last equality we used $n\cdot \tau = 0$ and the definition of $\kappa$, i.e. \eqref{def_kappa}. Combining \eqref{vorticity_bc_derivation_1}, \eqref{vorticity_bc_derivation_2} and \eqref{navSlip} results in the boundary condition \eqref{vorticity_bc}.

Notice since $u$ satisfies \eqref{incompressibility}, we have the identity
\begin{align}
    \label{nablaOmega_minusDeltaUperp}
    \nabla^\perp \omega = (\partial_2^2 u_1 -\partial_1\partial_2 u_2, -\partial_1\partial_2 u_1 + \partial_1^2 u_2) = \Delta u.
\end{align}

\begin{lemma}[\texorpdfstring{$W^{1,p}$}{W1p}-Bound]
\label{lemma_vorticity_bound}
Let $p\geq 2$ and $u_0\in W^{1,p}(\Omega)$. 
Then $u\in L^\infty\left((0,\infty);W^{1,p}(\Omega)\right)$.
\end{lemma}
The following proof is a slight variation of \cite[Lemma 3.7]{bleitnerNobili} and we state it here for the convenience of the reader.
\begin{proof}
Let $p>2$. 
Fix an arbitrary $T>0$, set $\Lambda = 2\|(\alpha+\kappa)u_\tau\|_{L^\infty(0,T\times \partial\Omega)}$ and let $\tilde\omega^\pm$ solve
\begin{alignat*}{2}
    \tilde\omega^\pm_t + u\cdot \nabla \tilde\omega^\pm -\nu \Delta \tilde\omega^\pm &= \partial_1 \theta & \qquad\text{ in }&\Omega\\
    \tilde\omega^\pm_0 &=\pm |\omega_0| & \qquad\text{ in }&\Omega\\
    \tilde\omega^\pm &= \pm \Lambda & \qquad\text{ on }&\partial\Omega.
\end{alignat*}
Then the difference $\bar\omega^\pm = \omega - \tilde\omega^\pm$ solves
\begin{alignat*}{2}
    \bar\omega^\pm_t + u\cdot \nabla \bar\omega^\pm -\nu \Delta \bar\omega^\pm &= 0 & \qquad\text{ in }&\Omega\\
    \bar\omega^\pm_0 &=\omega_0 \mp |\omega_0| & \qquad\text{ in }&\Omega\\
    \bar\omega^\pm &= -2 (\alpha+\kappa)u_\tau \mp \Lambda & \qquad\text{ on }&\partial\Omega.    
\end{alignat*}
Since the initial and boundary values of $\bar \omega^\pm$ are sign definite, this implies $\bar \omega^+ \leq 0$ ($\bar \omega^- \geq 0$) by the maximum principle. By the definition of $\bar \omega^\pm$ we therefore get $\tilde \omega^-\leq \omega \leq \tilde\omega^+$ and
\begin{align}
    \label{omega_leq_omegaTilde}
    |\omega|\leq \max\left\lbrace|\tilde\omega^+|,|\tilde\omega^-|\right\rbrace.
\end{align}
We will now derive upper bounds for $\tilde\omega^+$, which by symmetry also hold for $\tilde\omega^-$. Omitting the indices, we define
\begin{align}
    \label{def_omega_hat}
    \hat \omega = \tilde \omega - \Lambda,
\end{align}
which satisfies
\begin{alignat}{2}
    \hat\omega_t + u\cdot \nabla \hat \omega - \nu \Delta \hat \omega &= \partial_1 \theta & \qquad\text{ in }&\Omega\label{hatOmega_pde}\\
    \hat\omega_0 &= |\omega_0|-\Lambda & \qquad\text{ in }&\Omega\label{hotOmega_ic}\\
    \hat \omega &= 0 & \qquad\text{ on }&\partial\Omega.
\end{alignat}
Recall $p>2$ by assumption. Testing \eqref{hatOmega_pde} with $\hat \omega|\hat\omega|^{p-2}$ and integrating by parts we obtain
\begin{equation}
    \label{hatOmega_estimate_1}
    \begin{aligned}
        \frac{1}{p}\frac{d}{dt}\|\hat\omega\|_{L^p}^p 
        &= -(p-1) \nu \int_\Omega |\hat\omega|^{p-2} |\nabla\hat \omega|^2 - (p-1) \int_\Omega |\hat \omega|^{p-2} \theta \partial_1 \hat \omega.
    \end{aligned}
\end{equation}
Next we estimate the last term on the right-hand side of \eqref{hatOmega_estimate_1} via Hölder's and Young's inequality by
\begin{equation}
    \label{hatOmega_estimate_2}
    \begin{aligned}
        (p-1)\left\||\hat \omega|^{p-2} \theta \partial_1 \hat \omega\right\|_{L^1} &\leq (p-1) \left\|\theta\right\|_{L^p} \left\|\hat \omega^\frac{p-2}{2}\right\|_{L^q} \left\|\hat\omega^\frac{p-2}{2}\nabla \hat \omega\right\|_{L^2}
        \\
        &\leq \frac{\nu (p-1)}{2} \left\|\hat\omega^\frac{p-2}{2}\nabla \hat\omega \right\|_{L^2}^2 + \frac{p-1}{2\nu}\|\theta\|_{L^p}^2 \left\|\hat\omega^\frac{p-2}{2}\right\|_{L^q}^2,        
    \end{aligned}
\end{equation}
where $\frac{1}{p}+\frac{1}{q}+\frac{1}{2}=1$ and therefore $q=\frac{2p}{p-2}$. Combining \eqref{hatOmega_estimate_1} and \eqref{hatOmega_estimate_2} we find
\begin{align*}
    \frac{1}{p}\frac{d}{dt} \|\hat\omega\|_{L^p}^p + \frac{2(p-1)\nu}{p^2}\left\|\nabla \left(\hat\omega^\frac{p}{2}\right)\right\|_{L^2}^2 \leq \frac{p-1}{2\nu}\|\theta\|_{L^p}^2 \left\|\hat\omega\right\|_{L^p}^{p-2}.
\end{align*}
Next by Poincaré's inequality, since $\hat\omega = 0$ on $\partial\Omega$,
\begin{align*}
    \frac{1}{p}\frac{d}{dt} \|\hat\omega\|_{L^p}^p + \frac{\nu}{C}\left\|\hat\omega\right\|_{L^p}^p \leq \frac{p-1}{2\nu}\|\theta\|_{L^p}^2 \left\|\hat\omega\right\|_{L^p}^{p-2}
\end{align*}
for some constant $C>0$. We divide by $\|\hat\omega\|_{L^p}^{p-2}$, then use \eqref{theta_conservation} and Gr{\"o}nwall's inequality to obtain
\begin{align}
    \label{bound_omega_hat}
    \|\hat\omega\|_{L^p} \leq e^{-\frac{\nu}{C}t}\|\hat\omega_0\|_{L^p} + C\nu^{-1}\|\theta_0\|_{L^p}.
\end{align}
Next we estimate $\Lambda$. By Gagliardo-Nirenberg interpolation and Young's inequality we get
\begin{align}
\notag
        \Lambda &= 2 \|(\alpha+\kappa) u_\tau\|_{L^\infty((0,T)\times \partial\Omega)}
        \leq 2 \|\alpha+\kappa\|_{L^\infty(\partial\Omega)} \|u\|_{L^\infty((0,T)\times \Omega)}
        \\ \notag
        &\leq C \|\nabla u\|_{L^\infty(0,T;L^p(\Omega))}^\frac{p}{2(p-1)} \|u\|_{L^\infty(0,T;L^2(\Omega))}^{\frac{p-2}{2(p-1)}} + C \|u\|_{L^\infty(0,T;L^2(\Omega))}
        \\ \label{Lambda_estimate}
        &\leq \epsilon C \|\nabla u\|_{L^\infty(0,T;L^p(\Omega))} + C \left(1+\epsilon^{\frac{p}{2-p}}\right)\|u\|_{L^\infty(0,T;L^2(\Omega))}
\end{align}
for arbitrary $\epsilon>0$ and $C>0$ depends on $\alpha$, $\Omega$ and $p$.
Plugging the estimate of Lemma \ref{preliminaries_lemma_elliptic_regularity} for $k=0$, proven in the appendix, into \eqref{Lambda_estimate} and using Lemma \ref{lemma_energy_bound} we find
\begin{equation}
    \label{bound_lambda}
    \begin{aligned}
        \Lambda
        &\leq \epsilon C \|\omega\|_{L^\infty(0,T;L^p(\Omega))} + C \left(1 + \epsilon +  \epsilon^{\frac{2}{2-p}}\right)\left(\|u_0\|_{L^2} + \nu^{-1}\|\theta_0\|_{L^2}\right).
    \end{aligned}
\end{equation}
Recall that $|\omega|\leq \max\lbrace|\tilde\omega^+|,|\tilde\omega^-|\rbrace$ by \eqref{omega_leq_omegaTilde}. Combining \eqref{def_omega_hat}, \eqref{bound_omega_hat}, \eqref{hotOmega_ic} and \eqref{bound_lambda} we obtain
\begin{equation*}
    \begin{aligned}
        \|\omega\|_{L^\infty(0,T;L^p(\Omega))}
        &\leq \|\tilde \omega\|_{L^\infty(0,T;L^p(\Omega))}
        = \|\hat \omega + \Lambda \|_{L^\infty(0,T;L^p(\Omega))}
        \\
        &\leq \|\hat\omega_0\|_{L^p} + C \nu^{-1}\|\theta_0\|_{L^p} + C \Lambda
        \\
        &\leq \|\omega_0\|_{L^p} + C \nu^{-1}\|\theta_0\|_{L^p} + \epsilon C \|\omega\|_{L^\infty(0,T;L^p(\Omega))}
        \\
        &\qquad + C \left(1 + \epsilon + \epsilon^{\frac{2}{2-p}}\right)\left(\|u_0\|_{L^2} + \nu^{-1}\|\theta_0\|_{L^2}\right).
    \end{aligned}
\end{equation*}
Finally choosing $\epsilon< \frac{1}{C}$ and using Hölder's inequality we find
\begin{align}
    \label{lp_omega_temp}
    \|\omega\|_{L^\infty(0,T;L^p(\Omega))} \leq C\left(\|\omega_0\|_{L^p} + \|u_0\|_{L^2} +  \nu^{-1}\|\theta_0\|_{L^p}\right).
\end{align}
By \eqref{Lambda_estimate} also $\|u\|_{L^\infty(0,T;L^p(\Omega))}$ is bounded by the right-hand side of \eqref{lp_omega_temp}, which together with Lemma \ref{preliminaries_lemma_elliptic_regularity} implies
\begin{align*}
    \|u\|_{L^\infty(0,T;W^{1,p}(\Omega))} \leq C\left(\|\nabla u_0\|_{L^p} + \|u_0\|_{L^2} + \nu^{-1}\|\theta_0\|_{L^p}\right).
\end{align*}
As the bound is independent of $T$ it holds uniformly in time.

For $p=2$, the statement follows from the embedding $L^p\subset L^2$ for any $p>2$.
\end{proof}

\subsection{Argument for \texorpdfstring{$p\in L^\infty((0,\infty);H^1(\Omega))$}{H1-pressure-regularity}}\label{Sec:p-H1_regularity}

The pressure satisfies
\begin{alignat}{2}
    \Delta p &= - (\nabla u)^T: \nabla u + \partial_2 \theta &\qquad \text{ in } &\Omega \label{pressure_pde}
    \\
    n\cdot \nabla p &= - \kappa u_\tau^2 + 2 \nu \tau \cdot \nabla \left((\alpha+\kappa) u_\tau \right) + n_2\theta &\qquad \text{ on } &\partial\Omega \label{pressure_bc}
\end{alignat}

The equation in the bulk is obtained by taking the divergence of \eqref{navierStokes} and using \eqref{incompressibility}. In order to derive the boundary conditions, we trace the projection of \eqref{navierStokes} in the normal direction on the boundary
\begin{align}
    \label{n_cdot_navStokes}
    n\cdot u_t + n \cdot (u\cdot \nabla) u - \nu n\cdot \Delta u + n\cdot \nabla p = n_2 \theta\,.
\end{align}
The first term on the left-hand side of \eqref{n_cdot_navStokes} vanishes by \eqref{nonPenetration}. For the nonlinear term we find
\begin{align*}
    n\cdot (u\cdot \nabla) u = u_\tau n\cdot (\tau \cdot \nabla) u = \kappa u_\tau^2
\end{align*}
thanks to \eqref{nonPenetration} and \eqref{vorticity_bc_derivation_2}. By \eqref{nablaOmega_minusDeltaUperp} and \eqref{vorticity_bc} the diffusion term fulfills
\begin{align}
    n \cdot \Delta u
    = n\cdot \nabla^\perp \omega
    = - 2 n\cdot \nabla^\perp \left((\alpha+\kappa)u_\tau\right)
    = 2 \tau \cdot \nabla \left((\alpha+\kappa)u_\tau\right).
    \label{nCdotDeltaU_on_boundary}
\end{align}

\begin{lemma}[Pressure Bound]
\label{lemma_pressure_bound}
Suppose $u_0\in W^{1,4}(\Omega)$ and $\theta_0\in L^4(\Omega)$.  Then $p\in L^\infty((0,\infty);H^1(\Omega))$. 
\end{lemma}
\begin{proof}
Testing \eqref{pressure_pde} with $p$ and integrating by parts we obtain
\begin{align}
    \label{test_pressure_pde}
    \int_\Omega  p \Delta p
    &= - \int_\Omega p (\nabla u)^T: \nabla u + \int_\Omega p\partial_2 \theta
    =  - \int_\Omega p (\nabla u)^T: \nabla u + \int_{\partial\Omega} p n_2 \theta - \int_\Omega \theta \partial_2 p\,.
\end{align}
Again integration by parts and \eqref{pressure_bc} imply
\begin{equation}
    \label{identity_nablaP}
    \begin{aligned}
        \|\nabla p\|_{L^2}^2
        &= \int_{\partial\Omega} n p \cdot \nabla p - \int_\Omega p \Delta p
        \\
        &= - \int_{\partial\Omega} p \kappa u_\tau^2 + \int_{\partial\Omega} 2  p \nu \tau \cdot \nabla \left((\alpha+\kappa) u_\tau \right) +  \int_{\partial\Omega} p n_2 \theta - \int_\Omega p \Delta p.
    \end{aligned}
\end{equation}
Plugging \eqref{test_pressure_pde} into \eqref{identity_nablaP} we find
\begin{equation}
    \label{nablaP_identity_2}
    \begin{aligned}
        \|\nabla p\|_{L^2}^2
        &= - \int_{\partial\Omega} p \kappa u_\tau^2 + 2 \int_{\partial\Omega} p \nu \tau \cdot \nabla \left((\alpha+\kappa) u_\tau \right) + \int_\Omega p (\nabla u)^T: \nabla u + \int_\Omega \theta \partial_2 p.
    \end{aligned}
\end{equation}
We estimate the right-hand side of \eqref{nablaP_identity_2} individually. The first term can be bounded by the trace theorem, Hölder's inequality, and $\epsilon$-Young's inequality by
\begin{equation}
    \label{p_estimate_estimate_1}
    \begin{aligned}
        \left| \int_{\partial\Omega} p \kappa u_\tau^2\right|
        &\leq C \|\kappa\|_{L^\infty} \| p u^2\|_{W^{1,1}}
        \leq C \|p\|_{H^1} \|u^2\|_{H^1}
        \\
        &
        \leq C \|p\|_{H^1} \|u\|_{L^4} \|u\|_{W^{1,4}}
        \leq \epsilon \|p\|_{H^1}^2 + C \epsilon^{-1} \|u\|_{W^{1,4}}^4,        
    \end{aligned}
\end{equation}
the second term by Lemma \ref{lemma_nablaPcdotU_onBoundary} and $\epsilon$-Young's inequality
\begin{align}
    \label{p_estimate_estimate_2}
    \left|\int_{\partial\Omega} 2 p \nu \tau \cdot \nabla \left((\alpha+\kappa) u_\tau\right) \right| &\leq C \nu\|(\alpha+\kappa)n\|_{W^{1,\infty}(\partial\Omega)}\|p\|_{H^1}\|u\|_{H^1}
    \\
    &\lesssim \epsilon \|p\|_{H^1}^2 + C \epsilon^{-1} \|u\|_{H^1}^2,
\end{align}
the third term by
\begin{align}
    \label{p_estimate_estimate_3}
    \left\| p |\nabla u|^2\right\|_{L^1} \leq \|p\|_{L^2} \|\nabla u\|_{L^4}^2 \leq \epsilon \|p\|_{H^1}^2 + \epsilon^{-1} \|u\|_{W^{1,4}}^4,
\end{align}
and the fourth term by
\begin{align}
    \label{p_estimate_estimate_4}
    \|\theta \partial_2 p \|_{L^1} \leq \|\theta \|_{L^2} \|p\|_{H^1} \leq \epsilon \|p\|_{H^1}^2 + \epsilon^{-1} \|\theta\|_{L^2}^2.
\end{align}
Combining \eqref{nablaP_identity_2}, \eqref{p_estimate_estimate_1}, \eqref{p_estimate_estimate_2}, \eqref{p_estimate_estimate_3} and \eqref{p_estimate_estimate_4}
\begin{align*}
    \|\nabla p\|_{L^2}^2 \leq 4 \epsilon \|p\|_{H^1}^2 + C\epsilon^{-1} \left(\|u\|_{W^{1,4}}^4 + \nu^{2} \|u\|_{H^1}^2 + \|\theta\|_{L^2}^2\right).
\end{align*}
As $p$ is only defined up to a constant we choose it such that $p$ is average free. Therefore Poincaré's inequality yields
\begin{align*}
    \|p\|_{H^1}^2 \leq C\|\nabla p\|_{L^2}^2 \leq 4 C\epsilon \|p\|_{H^1}^2 + C\epsilon^{-1} \left(\|u\|_{W^{1,4}}^4 + \nu^{2} \|u\|_{H^1}^2 + \|\theta\|_{L^2}^2\right).
\end{align*}
Choosing $\epsilon$ sufficiently small we find
\begin{align*}
    \|p\|_{H^1}^2 \leq C\left(\|u\|_{W^{1,4}}^4 + \nu^{2} \|u\|_{H^1}^2 + \|\theta\|_{L^2}^2\right) \leq C\left(\|u\|_{W^{1,4}}^4 + \nu^{2} \|u\|_{W^{1,4}}^2 + \|\theta\|_{L^4}^2\right),
\end{align*}
where in the last inequality we used Hölder's inequality and applying the previous bounds for $u$ and $\theta$, i.e. Lemma \ref{lemma_vorticity_bound} and \eqref{theta_conservation},
\begin{align*}
    \|p\|_{H^1} &\leq C\left(\|u_0\|_{W^{1,4}}^2 + \nu^{-2}\|\theta_0\|_{L^4}^2+ \nu \|u_0\|_{W^{1,4}} + \|\theta_0\|_{L^4}\right)
    \\
    &\leq C\left(\|u_0\|_{W^{1,4}}^2 + \nu^{-2}\|\theta_0\|_{L^4}^2+\nu^2\right).
\end{align*}
\end{proof}


\subsection{Argument for \texorpdfstring{$u_t \in L^\infty\left((0,\infty);L^2(\Omega)\right)\cap L^2\left((0,\infty);H^1(\Omega)\right)$}{ut-regularity}}\label{Sec:u_t-regularity}
\begin{lemma}[\texorpdfstring{$u_t$}{ut}-Bound]
\label{lemma_ut_bound}
Let $\theta_0\in L^4(\Omega)$, $u_0\in H^2(\Omega)$, $\nabla \cdot u_0=0$ in $\Omega$ and  $(\D u_0\ n + \alpha u_0)\cdot \tau=0$ on $\partial \Omega$. Then $u_t \in L^\infty\left((0,\infty);L^2(\Omega)\right)\cap L^2\left((0,\infty);H^1(\Omega)\right)$.
\end{lemma}

\begin{proof}

Differentiating \eqref{navierStokes}, \eqref{incompressibility}, \eqref{nonPenetration} and \eqref{navSlip} with respect to time yields
\begin{alignat}{2}
    u_{tt} + u_t\cdot \nabla u + u\cdot \nabla u_t - \nu \Delta u_t +\nabla p_t &= \theta_t e_2 &\qquad \text{ in } &\Omega \label{navStokes_t}
    \\
    \nabla \cdot u_t &= 0 &\qquad \text{ in } &\Omega \label{incompressibility_t}
    \\
    u_t \cdot n &=0 &\qquad \text{ on } &\partial\Omega \label{nonPenetration_t}
    \\
    (\D u_t \ n +\alpha u_t) \cdot \tau &= 0 &\qquad \text{ on } &\partial\Omega.
    \label{navSlip_t}
\end{alignat}
Testing \eqref{navStokes_t} with $u_t$ and using \eqref{incompressibility_t}, \eqref{nonPenetration_t}, \eqref{navSlip_t} and analogous estimates as in the proof of Lemma \ref{lemma_energy_bound} we find
\begin{align*}
    \frac{1}{2}\frac{d}{dt}\|u_t\|_{L^2}^2 + 2 \nu \|\D u_t\|_{L^2}^2 + 2 \nu \int_{\partial\Omega} \alpha (\tau\cdot u_t)^2 = - \int_\Omega u_t \cdot (u_t\cdot\nabla) u + \int_\Omega \theta_t u_t \cdot e_2.
\end{align*}
Substituting the thermal evolution equation \eqref{advection} and using integration by parts, \eqref{nonPenetration}, and \eqref{incompressibility},
\begin{equation}
    \label{ut_ode_identity}
    \begin{aligned}
        \frac{1}{2}\frac{d}{dt}\|u_t\|_{L^2}^2 + 2 \nu \|\D u_t\|_{L^2}^2 + 2 \nu \int_{\partial\Omega} \alpha (\tau\cdot u_t)^2
        &= - \int_\Omega u_t \cdot (u_t\cdot\nabla) u + \int_\Omega \theta u \cdot \nabla (u_t\cdot e_2).        
    \end{aligned}
\end{equation}
By Hölder's, Ladyzhenskaya's, and Young's inequalities
\begin{equation}
    \label{ut_estimate_1}
    \begin{aligned}
        \int_\Omega \left|u_t\cdot (u_t\cdot \nabla ) u\right|
        &\leq \|\nabla u\|_{L^2}\|u_t\|_{L^4}^2
        \leq C\|\nabla u\|_{L^2}\left(\|\nabla u_t\|_{L^2}^\frac{1}{2}\|u_t\|_{L^2}^\frac{1}{2}+\|u_t\|_{L^2}\right)^2
        \\
        &\leq C \|\nabla u\|_{L^2} \|\nabla u_t\|_{L^2} \|u_t\|_{L^2} + C \|\nabla u\|_{L^2} \|u_t\|_{L^2}^2
        \\
        &\leq \epsilon \|u_t\|_{H^1}^2 + C \epsilon^{-1} \|\nabla u\|_{L^2}^2 \|u_t\|_{L^2}^2
    \end{aligned}
\end{equation}
for all $\epsilon>0$ and similarly
\begin{align}
    \label{ut_estimate_2}
    \int_\Omega \left| \theta u\cdot \nabla (u_t\cdot e_2)\right| 
    &\leq \|\theta\|_{L^4} \|u\|_{L^4} \|\nabla u_t\|_{L^2}
    \leq \epsilon \|\nabla u_t\|_{L^2}^2 + \epsilon^{-1} \| \theta\|_{L^4}^2\|u\|_{H^1}^2.
\end{align}
Combining \eqref{ut_ode_identity}, \eqref{ut_estimate_1} and \eqref{ut_estimate_2}
\begin{equation*}
    \begin{aligned}
        \frac{1}{2}\frac{d}{dt}\|u_t\|_{L^2}^2 &+ 2 \nu \|\D u_t\|_{L^2}^2 + 2 \nu \int_{\partial\Omega} \alpha (\tau\cdot u_t)^2 
        \\
        &\leq 2\epsilon \|u_t\|_{H^1}^2 + C\epsilon^{-1}\|\nabla u\|_{L^2}^2 \|u_t\|_{L^2}^2 + \epsilon^{-1} \|\theta\|_{L^4}^2\|u\|_{H^1}^2.
    \end{aligned}
\end{equation*}
Using the coercivity estimate (i.e. Lemma \ref{lemma_coercivity} applied to $u_t$) \eqref{theta_conservation}, and choosing $\epsilon$ sufficiently small,
\begin{align}
    \label{ut_ode_estimate}
    \frac{d}{dt}\|u_t\|_{L^2}^2 &\leq \frac{d}{dt}\|u_t\|_{L^2}^2 + \frac{1}{\tilde C} \|u_t\|_{H^1}^2\leq C\left( \|u_t\|_{L^2}^2 + \|\theta_0\|_{L^4}^2\right)\|u\|_{H^1}^2
\end{align}
for constants $C,\tilde C >0$ depending on $\Omega$, $\nu$, $\|\alpha^{-1}\|_{L^\infty}^{-1}$, $\|\alpha^{-1}\kappa\|_{L^\infty}^{-1}$. Grönwall's inequality yields
\begin{align}
    \label{ut_gronwall_estimate}
    \|u_t(t)\|_{L^2}^2 \leq e^{C\int_0^t\|u(s)\|_{H^1}^2 ds}\|u_t(0)\|_{L^2}^2 + C\|\theta_0\|_{L^4}^2\int_0^t e^{C\int_s^t\|u(r)\|_{H^1}^2\ dr} \|u(s)\|_{H^1}^2 \ ds\,.
\end{align}
Testing smooth solutions of \eqref{navierStokes}-\eqref{navSlip} with $u_t$ one gets
\begin{align*}
    \|u_t\|_{L^2}^2 &\leq \|u_t\|_{L^2}\|u\cdot \nabla u\|_{L^2} + \nu \|u_t\|_{L^2}\|\Delta u\|_{L^2} + \|u_t\|_{L^2}\|\theta\|_{L^2}
    \\
    &\leq \|u_t\|_{L^2}\left((\|u\|_{H^1} + \nu) \|u\|_{H^2} + \|\theta\|_{L^2}\right),
\end{align*}
implying $\|u_t(0)\|_{L^2}\leq (\|u_0\|_{H^1} + \nu) \|u_0\|_{H^2} + \|\theta_0\|_{L^2}$. Therefore, using Lemma \ref{lemma_L2H1_bound}, the right-hand side of \eqref{ut_gronwall_estimate} is universally bounded in time, implying
\begin{align}
    \label{ut_l2_bound_in_proof}
    u_t\in L^\infty\left((0,\infty);L^2(\Omega)\right).
\end{align}
Integrating \eqref{ut_ode_estimate} in time,
\begin{align*}
    \int_0^\infty \|u_t(s)\|_{H^1}^2\ ds\leq C\left(\|\theta_0\|_{L^4}^2+\esssup_{0\leq s\leq\infty} \|u_t(s)\|_{L^2}^2 \right) \int_0^\infty \|u(s)\|_{H^1}^2 \ ds + C \|u_t(0)\|_{L^2}^2 <\infty
\end{align*}
by Lemma \ref{lemma_L2H1_bound} and \eqref{ut_l2_bound_in_proof}, implying
\begin{align}
    \label{ut_in_L2H1}
    u_t \in L^2((0,\infty);H^1(\Omega)).
\end{align}
\end{proof}

\vspace{1cm}

We are now ready to prove Theorem \ref{theorem_well_posedness}.

\begin{proof}[Proof of Theorem \ref{theorem_well_posedness}]
\hypertarget{proof_of_theorem_well_posedness}{\phantom{a}}
\leavevmode
\begin{itemize}
    \item Regularity:
        In Section \ref{Sec:u-W1p_regularity} we showed that for any $p\geq 2$
        \begin{equation}\label{u-inter}u\in L^{\infty}((0,\infty); W^{1,p}).\end{equation}
        The regularity \eqref{reg-p} and \eqref{reg-ut} is proved in Section \ref{Sec:p-H1_regularity} and \ref{Sec:u_t-regularity} respectively. The regularity $u\in L^{\infty}((0,\infty), H^2)$ now follows by subtraction. Indeed, by \eqref{nablaOmega_minusDeltaUperp} and \eqref{navierStokes}
        \begin{equation}
            \label{nablaOmega_estimate}
            \begin{aligned}
                \nu^2 \|\nabla \omega\|_{L^2}^2 &= \nu^2\|\Delta u\|_{L^2}^2
                = \|u_t + u\cdot \nabla u+\nabla p -\theta e_2 \|_{L^2}^2
                \\
                &\leq C \left(\|u_t\|_{L^2}^2 + \|u\|_{W^{1,4}}^2 + \|p\|_{H^1}^2 + \|\theta\|_{L^2}^2\right).
            \end{aligned}
        \end{equation}
        Since $\Omega$ is a $C^{2,1}$ domain and $\alpha\in W^{2,\infty}$, Lemma \ref{preliminaries_lemma_elliptic_regularity} implies
        \begin{align}
            \label{uH2_estimate}
            \|u\|_{H^2}^2
            &\leq \|\nabla u\|_{H^1}^2 + \|u\|_{L^2}^2
            \leq C \|\omega\|_{H^1} + C \|u\|_{L^2}^2
            \leq C \|\nabla \omega\|_{L^2}^2 + C \|u\|_{H^1}^2. 
        \end{align}
        Combining \eqref{nablaOmega_estimate} and \eqref{uH2_estimate} and using H{\"o}lder's inequality,
        \begin{equation}
            \label{uH2_estimate2}
            \begin{aligned}
                \|u\|_{H^2}^2
                &\leq C \|\nabla \omega\|_{L^2}^2 + C \|u\|_{H^1}^2
                \\
                &\leq C \left(\|u_t\|_{L^2}^2 + \|u\|_{W^{1,4}}^2 + \|p\|_{H^1}^2 + \|\theta\|_{L^2}^2\right).
            \end{aligned}
        \end{equation}
     Notice that by Lemmas \ref{lemma_pressure_bound}, \ref{lemma_vorticity_bound}, and \ref{lemma_ut_bound} and \eqref{theta_conservation}, the right-hand side of \eqref{uH2_estimate2} is universally bounded in time with a constant $C = C(\alpha,\Omega,\nu, \kappa) > 0$, implying
        \begin{align}
            \label{H2}
            u\in L^\infty((0,\infty);H^2(\Omega)).
        \end{align}
        Finally the regularity of $u$ in \eqref{reg-u} follows from \eqref{u-inter},  \eqref{H2}, and the Gagliardo-Nirenberg interpolation inequality, 
        \begin{align*}
            \|u\|_{W^{1,p}}^p\leq C \|u\|_{H^2}^{p-2} \|u\|_{H^1}^{2}\,,
        \end{align*}
        for any $2\leq p <\infty$. 

    \item Higher Regularity:
        Now let $\Omega$ be a simply connected domain with $C^{3,1}$ boundary and $0<\alpha \in W^{2,\infty}(\partial\Omega)$. Testing \eqref{vorticity_eq} with $\Delta\omega$ we find
        \begin{align}
            \label{vorticity_testDeltaOmega}
            \int_\Omega \omega_t \Delta \omega + \int_\Omega u\cdot \nabla \omega \Delta \omega -\nu \|\Delta\omega\|_{L^2}^2 =\int_\Omega \partial_1\theta \Delta\omega.
        \end{align}
        The first term on the left-hand side can be written as
        \begin{align*}
            \int_\Omega \omega_t \Delta\omega = - \int_\Omega \nabla \omega \cdot \nabla\omega_t + \int_{\partial\Omega} \omega_t n\cdot\nabla \omega = -\frac{1}{2}\frac{d}{dt}\|\nabla\omega\|_{L^2}^2 + \int_{\partial\Omega} \omega_t n\cdot\nabla \omega,
        \end{align*}
        which combined with \eqref{vorticity_testDeltaOmega} implies
        \begin{equation}
            \label{h3nablaOmega_estimate}
            \begin{aligned}
                \frac{1}{2}\frac{d}{dt}\|\nabla\omega\|_{L^2}^2 &+ \nu \|\Delta\omega\|_{L^2}^2 = \int_{\partial\Omega} \omega_t n\cdot\nabla \omega + \int_\Omega u\cdot\nabla\omega \Delta\omega - \int_\Omega \partial_1 \theta \Delta\omega.
            \end{aligned}
        \end{equation}
        We estimate these terms individually. Since $\alpha$, $\kappa$ and $\tau$ are independent of time, \eqref{vorticity_bc} implies
        \begin{align*}
            \omega_t = -2 (\alpha+\kappa) u_t\cdot \tau
        \end{align*}
        on $\partial\Omega$, which combined with Hölder's inequality, the trace theorem and Young's inequality yields
        \begin{equation}
            \label{h3nablaOmega_estimate_part1}
            \begin{aligned}
                \int_{\partial\Omega} \omega_t n\cdot\nabla \omega &= -2\int_{\partial\Omega} (\alpha+\kappa) u_t\cdot \tau n\cdot\nabla\omega
                \leq C\|\alpha+\kappa\|_{L^\infty} \|u_t \nabla\omega\|_{W^{1,1}}
                \\
                &\leq C \|u_t\|_{H^1} \|\nabla\omega\|_{H^1}
                \leq \epsilon \|\nabla\omega\|_{H^1}^2 + \epsilon^{-1} C \|u_t\|_{H^1}^2
            \end{aligned}
        \end{equation}
        for any $\epsilon>0$. For the second term on the right-hand side of \eqref{h3nablaOmega_estimate}, Hölder's inequality, Ladyzhenskaya's interpolation inequality, and Young's inequality imply
        \begin{equation}
            \label{h3nablaOmega_estimate_part2}
            \begin{aligned}
                \int_\Omega u\cdot \nabla \omega \Delta \omega
                &\leq \|u\|_{L^4} \|\nabla \omega\|_{L^4} \|\Delta\omega\|_{L^2}
                \leq \|u\|_{H^1}^\frac{1}{2}\|u\|_{L^2}^\frac{1}{2}\|\nabla\omega\|_{H^1}^\frac{1}{2}\|\nabla\omega\|_{L^2}^\frac{1}{2}\|\Delta\omega\|_{L^2}
                \\
                &\leq C\|u\|_{H^1} \|\nabla\omega\|_{H^1}^\frac{3}{2} \|\nabla\omega\|_{L^2}^\frac{1}{2}
                \leq \epsilon \|\nabla\omega\|_{H^1}^2 + C\epsilon^{-1} \|u\|_{H^1}^4 \|\nabla \omega\|_{L^2}^2.
            \end{aligned}
        \end{equation}
        Notice also that Lemma \ref{preliminaries_lemma_elliptic_regularity} applied to \eqref{h3nablaOmega_estimate_part1} and \eqref{h3nablaOmega_estimate_part2} yields
        \begin{align}
            \label{h3nablaOmega_estimate_a}
            \int_{\partial\Omega} \omega_t n\cdot\nabla \omega \leq \epsilon C \|\Delta\omega\|_{L^2}^2 + \epsilon^{-1}C\|u_t\|_{H^1}^2 + \epsilon C \|u\|_{H^2}^2
        \end{align}
        and
        \begin{align}
            \label{h3nablaOmega_estimate_b}
            \int_\Omega u\cdot \nabla \omega \Delta \omega \leq \epsilon C \|\Delta\omega\|_{L^2}^2 + C \left(\epsilon + \epsilon^{-1} \|u\|_{H^1}^4 \right) \|u\|_{H^2}^2.
        \end{align}
        Let at first $q\geq 2$, then the last term on the right-hand side of \eqref{h3nablaOmega_estimate} can be estimated by Young's inequality as
        \begin{align}
            \label{h3nablaOmega_estimate_part4}
            -\int_\Omega \partial_1 \theta \Delta \omega
            &\leq \|\nabla\theta\|_{L^2}\|\Delta\omega\|_{L^2} \notag
            \\
            &\leq \epsilon \|\Delta\omega\|_{L^2}^2 +  C \epsilon^{-1}\left(\|\nabla\theta\|_{L^q}^q+1\right) 
        \end{align}
        Combining \eqref{h3nablaOmega_estimate}, \eqref{h3nablaOmega_estimate_a}, \eqref{h3nablaOmega_estimate_b} \eqref{h3nablaOmega_estimate_part4}, and choosing $\epsilon$ sufficiently small,
        \begin{equation}
            \label{testNablaOmega_estimate}
            \begin{aligned}
                \frac{d}{dt}\|\nabla\omega\|_{L^2}^2 + \nu \|\Delta\omega\|_{L^2}^2 &\lesssim \|u_t\|_{H^1}^2 + (1 + \|u\|_{H^1}^4) \|u\|_{H^2}^2 + \|\nabla\theta\|_{L^q}^q+1.
            \end{aligned}
        \end{equation}
        Taking the gradient of \eqref{advection} and testing with $|\nabla \theta|^{q-2}\nabla\theta$,
        \begin{align}
            \label{gradTheta_estimate}
            \frac{1}{q}\frac{d}{dt}\|\nabla \theta\|_{L^q}^q = - \int_\Omega |\nabla \theta|^{q-2} \nabla\theta \cdot (\nabla \theta \cdot \nabla) u - \int_\Omega |\nabla \theta|^{q-2} \nabla \theta \cdot (u \cdot \nabla) \nabla \theta.
        \end{align}
        Using partial integration, \eqref{incompressibility} and \eqref{nonPenetration} the last on the right-hand side of \eqref{gradTheta_estimate} vanishes as
        \begin{align*}
            0=-\int_\Omega \nabla \cdot u |\nabla\theta|^q = -\int_{\partial\Omega} n\cdot u|\nabla \theta|^q + q \int_\Omega |\nabla \theta|^{q-2}\nabla \theta u: \nabla^2\theta
        \end{align*}
        and therefore Hölder's inequality implies
        \begin{align}
            \label{gradTheta_estimate_2}
            \frac{1}{q}\frac{d}{dt}\|\nabla \theta \|_{L^q}^q &\leq \|\nabla \theta\|_{L^q}^q \|\nabla u\|_{L^\infty}.
        \end{align}
        The logarithmic Sobolev inequality (for details see Lemma 2.2 in \cite{doeringWuZhaoZhen2018})
        \begin{align*}
            \|\nabla u\|_{L^\infty} \leq C \left(1+\|\nabla u \|_{H^1}\right) \log \left(1+\|\nabla u\|_{H^2}\right)
        \end{align*}
        applied to \eqref{gradTheta_estimate_2} and combined with Lemma \ref{preliminaries_lemma_elliptic_regularity} and Young's implies
        \begin{align}
        \notag
                \frac{1}{q}\frac{d}{dt}\|\nabla \theta \|_{L^q}^q 
                &\leq C \|\nabla \theta\|_{L^q}^q \left(1+\|u\|_{H^2}\right)\log \left(1+ C\|\Delta \omega\|_{L^2} + C\|u\|_{H^2}\right)
            \\
                &\leq C \|\nabla \theta\|_{L^q}^q \left(1+\|u\|_{H^2}\right)\log \left(1+ \|\Delta \omega\|_{L^2}\right) + C\|\nabla \theta\|_{L^q}^q (1+\|u\|_{H^2}^2). \label{nabla_theta_qq_estimate}
        \end{align}
        Combining \eqref{testNablaOmega_estimate} and \eqref{nabla_theta_qq_estimate} we find
        \begin{equation}
            \label{yzab_explicitly_written}
            \begin{aligned}
                \frac{d}{dt}\bigg(\|\nabla\omega\|_{L^2}^2 &+ \frac{2}{q}\|\nabla \theta\|_{L^q}^q\bigg) + \nu\|\Delta \omega\|_{L^2}^2
                \\
                &\lesssim \|\nabla\theta\|_{L^q}^q\left(1+\|u\|_{H^2}^2\right) + \|\nabla \theta\|_{L^q}^q \left(1+\|u\|_{H^2}\right)\log \left(1+ \|\Delta \omega\|_{L^2}\right)
                \\
                &\qquad + \|u\|_{H^1}^4 \|u\|_{H^2}^2 + \|u\|_{H^2}^2 + \|u_t\|_{H^1}^2.
            \end{aligned}
        \end{equation}
        Next we define
        \begin{align*}
            Y(t)&= \|\nabla\omega\|_{L^2}^2 + \frac{2}{q}\|\nabla\theta\|_{L^q}^q +1,
            \\
            Z(t)&=\nu\|\Delta\omega\|_{L^2}^2,
            \\
            A(t)&=C\left(1+\|u\|_{H^2}^2+ \|u\|_{H^1}^4 \|u\|_{H^2}^2 + \|u_t\|_{H^1}^2\right),
            \\
            B(t)&= C\left(1+\|u\|_{H^2}\right),
        \end{align*}
        which fulfill $A\in L^1(0,T)$ by  \eqref{H2} and Lemma \ref{lemma_ut_bound}, $B\in L^2(0,T)$ by \eqref{H2} and
        \begin{equation*}
            \begin{aligned}
                \frac{d}{dt}Y(t) + Z(t)
                &\leq A(t)Y(t) + B(t)Y(t)\log\left(1+Z(t)\right)
            \end{aligned}
        \end{equation*}
        because of \eqref{yzab_explicitly_written}. Therefore the logarithmic Grönwall's inequality (for details see Lemma 2.3 in \cite{doeringWuZhaoZhen2018}) implies $Y \in L^\infty(0,T), Z \in L^1(0,T)$
        for all $T>0$, which by their definitions imply
        \begin{align}
            \label{h3_bound_nablaTheta_first_bound}
            \nabla \theta &\in L^\infty\big((0,T);L^q(\Omega)\big)
            \\
            \Delta\omega &\in L^2\left((0,T);L^2(\Omega)\right)
        \end{align}
        for $2<q$.  Since $\|\theta\|_{L^q}$ is bounded by \eqref{theta_conservation} and $\|u\|_{H^2}$ by \eqref{H2}, then by Lemma \ref{preliminaries_lemma_elliptic_regularity}
        \begin{align}
            \label{h3_bound_u_in_h3_in_proof}
            u &\in  L^2\left((0,T);H^3(\Omega)\right).
        \end{align}
        By Hölder's inequality and \eqref{theta_conservation} we are able to generalize \eqref{h3_bound_nablaTheta_first_bound} to
        \begin{align*}
            \theta&\in L^\infty\left((0,T);W^{1,q}(\Omega)\right)
        \end{align*}
        for all $1\leq q <\infty$.
        By Gagliardo-Nirenberg interpolation
        \begin{align*}
            \|u\|_{W^{2,p}}^\frac{2p}{p-2} &\leq C \|u\|_{H^3}^2  \|u\|_{H^2}^{\frac{4}{p-2}}
        \end{align*}
        for all $2<p<\infty$, which because of \eqref{H2} and \eqref{h3_bound_u_in_h3_in_proof} imply $u\in L^\frac{2p}{p-2}\left((0,T);W^{2,p}(\Omega)\right)$
        for all $2\leq p<\infty$, where the case $p = 2$ is due to \eqref{H2}.
\end{itemize}
\end{proof}

\section{Proof of Theorem \ref{new-th-large_time}: Large-Time Behavior}
\label{section_large_time_behavior}

\subsection{Proof of Theorem \ref{new-th-large_time} Part \ref{largetimetheorem_part_decay}}

\paragraph{Argument for (a):}

Recalling \eqref{energy_decay_estimate_ode_uL2}, one has
\begin{align*}
    \frac{d}{dt} \|u\|_{L^2}^2 \leq C \nu^{-1} \|\theta_0\|_{L^2}^2,
\end{align*}
and by \eqref{uInL2H1} $\|u\|_{L^2}^2\in L^1(0,\infty)$. Therefore Lemma \ref{lemma_L1_Nonnegative_UniformlyCont_Implies_Decay} implies
\begin{align}\label{utozero}
    \|u(t)\|_{L^2}^2\to 0 \qquad \text{ for }t\to\infty.
\end{align}

Now we show the convergence to zero of the $H^1$-norm of $u$.   
Testing \eqref{vorticity_eq} with $\omega$
\begin{align}
    \label{test_vorticity}
    \frac{1}{2}\frac{d}{dt}\|\omega\|_{L^2}^2
    &= - \int_\Omega \omega u \cdot \nabla \omega + \nu \int_\Omega \omega \Delta \omega + \int_\Omega \partial_1 \theta \omega.
\end{align}
The first term on the right-hand side of \eqref{test_vorticity} vanishes under integration by parts because of \eqref{incompressibility} and \eqref{nonPenetration}. In order to estimate the second term one needs boundary values for $\nabla \omega$. Notice that by \eqref{nablaOmega_minusDeltaUperp}, $\nabla \omega = -\Delta u^\perp$,
and therefore
\begin{align}
    \label{nablaOmega_along_boundary}
    \nu n\cdot \nabla \omega = \nu \tau \cdot \Delta u = \tau \cdot u_t + \tau \cdot (u\cdot \nabla) u + \tau \cdot \nabla p - \theta \tau_2,
\end{align}
where the second identity is derived from tracing \eqref{navierStokes} along the boundary. Using partial integration, \eqref{vorticity_bc} and \eqref{nablaOmega_along_boundary} we find
\begin{equation}
    \label{vorticity_balance_diffusionTerm}
    \begin{aligned}
        \nu \int_\Omega \omega \Delta \omega
        &= - \nu \|\nabla \omega \|_{L^2}^2 + \nu \int_{\partial\Omega} \omega n \cdot \nabla \omega
        \\
        &=  - \nu \|\nabla \omega \|_{L^2}^2 - 2 \nu \int_{\partial\Omega} (\alpha+\kappa) u_\tau n \cdot \nabla \omega
        \\
        &= - \nu \|\nabla \omega \|_{L^2}^2 - 2 \int_{\partial\Omega} (\alpha+\kappa) u\cdot u_t - 2 \int_{\partial\Omega} (\alpha+\kappa) u \cdot (u\cdot \nabla) u 
        \\
        &\qquad - 2 \int_{\partial\Omega} (\alpha+\kappa) u \cdot \nabla p + 2 \int_{\partial\Omega} (\alpha+\kappa) u_\tau \theta \tau_2
        \\
        &=  - \nu \|\nabla \omega \|_{L^2}^2 - \frac{d}{dt} \int_{\partial\Omega} (\alpha+\kappa) u_\tau^2 - 2 \int_{\partial\Omega} (\alpha+\kappa) u \cdot (u\cdot \nabla) u 
        \\
        &\qquad - 2 \int_{\partial\Omega} (\alpha+\kappa) u \cdot \nabla p - \int_{\partial\Omega} \omega \theta \tau_2.
    \end{aligned}
\end{equation}
For the last term on the right-hand side of \eqref{test_vorticity}, partial integration yields
\begin{align}
    \label{vorticity_balance_temperatureTerm}
    \int_\Omega \partial_1 \theta \omega 
    &= \int_\Omega \nabla \cdot (\theta e_1) \omega
    = \int_{\partial\Omega} n_1 \theta \omega - \int_\Omega \theta \partial_1 \omega.
\end{align}
Combining \eqref{test_vorticity}, \eqref{vorticity_balance_diffusionTerm}, and \eqref{vorticity_balance_temperatureTerm},
\begin{equation}
    \label{vorticity_balance_almost}
    \begin{aligned}
        \frac{1}{2}\frac{d}{dt}\left(\|\omega\|_{L^2}^2 + 2 \int_{\partial\Omega} (\alpha+\kappa) u_\tau^2\right) &+ \nu \|\nabla \omega \|_{L^2}^2
        \\
        &= - 2 \int_{\partial\Omega} (\alpha+\kappa) u \cdot (u\cdot \nabla) u - 2 \int_{\partial\Omega} (\alpha+\kappa) u \cdot \nabla p
        \\
        &\qquad - \int_{\partial\Omega} \omega \theta \tau_2 + \int_{\partial\Omega} n_1 \theta \omega - \int_\Omega \theta \partial_1 \omega.
    \end{aligned}
\end{equation}
As $(-n_2,n_1)=n^\perp=\tau=(\tau_1,\tau_2)$ the third and fourth term on the right-hand side of \eqref{vorticity_balance_almost} cancel resulting in
\begin{equation}
    \label{decayProof_vort_balance}
    \begin{aligned}
        \frac{1}{2}\frac{d}{dt}&\left(\|\omega\|_{L^2}^2 + 2 \int_{\partial\Omega} (\alpha+\kappa) u_\tau^2\right)  + \nu \|\nabla \omega \|_{L^2}^2 
        \\
        &\qquad= - 2 \int_{\partial\Omega} (\alpha+\kappa) u \cdot (u\cdot \nabla) u - 2 \int_{\partial\Omega} (\alpha+\kappa) u \cdot \nabla p - \int_\Omega \theta \partial_1 \omega.
    \end{aligned}
\end{equation}
We estimate the first term on the right-hand side of \eqref{decayProof_vort_balance} by Hölder's inequality and the trace theorem as
\begin{align}
    \label{estimate_ucdotucdotnablau_onboundary}
    \int_{\partial\Omega} \left|(\alpha+\kappa) u\cdot (u\cdot\nabla) u\right|
    &\leq \|\alpha+\kappa\|_{L^\infty} \int_{\partial\Omega} \left|u\cdot (u\cdot\nabla) u\right|
    \leq C \left\|u^2 |\nabla u|\right\|_{W^{1,1}}
    \\
    &\leq C \|u\|_{W^{1,4}}^2 \|u\|_{H^2},
\end{align}
where the constant $C>0$ depends on $\Omega$ and $\alpha$. Due to Lemma \ref{lemma_nablaPcdotU_onBoundary} one gets
\begin{equation}
    \label{estimate_uGradpOnBoundary}
    \begin{aligned}
        -2 \int_{\partial\Omega} (\alpha+\kappa) u\cdot \nabla p 
        \leq C \|p\|_{H^1}\|u\|_{H^1}
    \end{aligned}
\end{equation}
for the third term on the right-hand side of \eqref{decayProof_vort_balance}. The last term on the right-hand side of \eqref{decayProof_vort_balance} can be estimated by
\begin{align}
    \label{estimate_thetaPartial1Omega}
    \int_\Omega |\theta \partial_1\omega |\leq \|\theta\|_{L^2} \| u\|_{H^2}.
\end{align}
Combining \eqref{decayProof_vort_balance}, \eqref{estimate_ucdotucdotnablau_onboundary}, \eqref{estimate_uGradpOnBoundary}, \eqref{estimate_thetaPartial1Omega}, Young's inequality and Lemma \ref{preliminaries_lemma_elliptic_regularity} yields
\begin{equation*}
    \begin{aligned}
        \frac{1}{2}\frac{d}{dt}&\left(\|\omega\|_{L^2}^2 + 2 \int_{\partial\Omega} (\alpha+\kappa) u_\tau^2\right) + \nu \|\nabla \omega \|_{L^2}^2 
        \\
        &\qquad\qquad\qquad\leq C \left(\|u\|_{W^{1,4}}^2 + \|p\|_{H^1} + \|\theta\|_{L^2}\right)\|u\|_{H^2}
        \\
        &\qquad\qquad\qquad\leq C \epsilon^{-1}\left(\|u\|_{W^{1,4}}^4 + \|p\|_{H^1}^2 + \|\theta\|_{L^2}^2\right) + \epsilon C \|\nabla\omega\|_{L^2}^2 + \epsilon C\|u\|_{H^1}^2
    \end{aligned}
\end{equation*}
for all $\epsilon>0$. Choosing $\epsilon = \frac{\nu}{2C}$ and using Lemma \ref{lemma_preliminaries_uv},
\begin{align}
    \label{odeDuPlusAlphaUsquared}
    \frac{d}{dt}\left( \|\D u\|_{L^2}^2 + \int_{\partial\Omega} \alpha u_\tau^2 \right)  + \frac{\nu}{2} \|\nabla \omega\|_{L^2}^2 &\leq C \nu^{-1}\left(\|u\|_{W^{1,4}}^4 + \|p\|_{H^1}^2 + \|\theta\|_{L^2}^2\right) + \nu C\|u\|_{H^1}^2.
\end{align}
Notice by Hölder's inequality, Lemma \ref{lemma_vorticity_bound}, \eqref{theta_conservation}, and Lemma \ref{lemma_pressure_bound}, the right-hand side of \eqref{odeDuPlusAlphaUsquared} is uniformly bounded in time and in particular
\begin{align}
    \frac{d}{dt}\left( \|\D u\|_{L^2}^2 + \int_{\partial\Omega} \alpha u_\tau^2 \right) \leq C
\end{align}
for some constant $C$.
Notice that $t\mapsto \|\D u(t)\|_{L^2}^2 + \int_{\partial\Omega} \alpha u_\tau^2 (t)\in L^1(0,\infty)$ by the trace theorem and \eqref{uInL2H1}.
Therefore Lemma \ref{lemma_L1_Nonnegative_UniformlyCont_Implies_Decay} implies
\begin{align*}
    \|\D u\|_{L^2}^2 + \int_{\partial\Omega} \alpha u_\tau^2 \to 0 \qquad \text{ for }t\to\infty
\end{align*}
and
\begin{align}
    \label{uinH1To0}
    \|u\|_{H^1} \to 0 \qquad \text{ for }t\to\infty
\end{align}
due to Lemma \eqref{lemma_coercivity}. 
Finally the convergence in $W^{1,p}$ for $2\leq p<\infty$ follows by the 
Gagliardo-Nirenberg interpolation inequality
\begin{align}
    \label{uW1pGagliardoNirenberg}
    \|u\|_{W^{1,p}} \leq C \|u\|_{H^2}^{1-\frac{2}{p}} \|u\|_{H^1}^{\frac{2}{p}}
\end{align}
for $2\leq p <\infty$ and the fact that by \eqref{H2} $\|u\|_{H^2}$ is bounded by a constant and that $\|u\|_{H^1}$ vanishes in the time limit by \eqref{uinH1To0}.

\paragraph{Argument for (b):}
Recall that in Lemma \ref{lemma_ut_bound} we proved $u_t \in L^2((0,\infty);H^1(\Omega))$ and
\begin{align*}
    \frac{d}{dt}\|u_t\|_{L^2}^2 &\leq C\,
\end{align*}
(see \eqref{ut_in_L2H1}, \eqref{ut_ode_estimate} and \eqref{ut_l2_bound_in_proof}).
Therefore Lemma \ref{lemma_L1_Nonnegative_UniformlyCont_Implies_Decay} implies
\begin{align}
    \label{utTo0}
    \|u_t(t)\|_{L^2}\to 0 \qquad \text{ for }t\to \infty.
\end{align}

\paragraph{Argument for (c):}

Since $H^1$ is dense in $L^2$, we find for any $g\in L^2$ there exists a function $h\in H^1$ such that $\|g-h\|_{L^2}\leq \epsilon$ for any $\epsilon>0$. Then
\begin{align}
    \langle g, \Delta u \rangle = \langle g-h,\Delta u \rangle + \langle h,\Delta u\rangle.
    \label{deltaU_weak_est_1}
\end{align}
We will estimate these terms individually. By Cauchy-Schwartz inequality
\begin{align}
    |\langle g-h ,\Delta u \rangle |\leq \|g-h\|_{L^2}\|\Delta u \|_{L^2} \leq \|g-h\|_{L^2} \|u\|_{H^2} \leq \epsilon C
    \label{deltaU_weak_est_2}
\end{align}
for some $C>0$ independent of time and any $\epsilon>0$, where in the last estimate we used \eqref{H2}. For the second term on the right-hand side of \eqref{deltaU_weak_est_1}, Lemma \ref{lemma_preliminaries_uv} yields
\begin{align}
    |\langle h,\Delta u\rangle| \leq C \|h\|_{H^1}\|u\|_{H^1}\to 0,
    \label{deltaU_weak_est_3}
\end{align}
thanks to \eqref{uinH1To0}. Combining \eqref{deltaU_weak_est_1}, \eqref{deltaU_weak_est_2}, \eqref{deltaU_weak_est_3} we find that for any $\delta>0$ and $g\in L^2$
\begin{align}
    |\langle g,\Delta u\rangle |\leq \delta
\end{align}
provided $t$ is sufficiently large, i.e.
\begin{align}
    \nabla^\perp \omega = \Delta u\rightharpoonup 0
    \label{DeltaU_weak_to0}
\end{align}
in $L^2$, where we additionally used \eqref{nablaOmega_minusDeltaUperp}.

For any $g\in L^2$, by \eqref{navierStokes} and Hölder's inequality
\begin{align}
    |\langle\nabla p - \theta_2, g\rangle| &= |\langle \nu \Delta u - u_t - u\cdot \nabla u , g\rangle|
    \\
    &\leq \nu |\langle \Delta u, g\rangle| + (\|u\|_{L^4} \|\nabla u\|_{L^4} + \|u_t\|_{L^2}) \|g\|_{L^2}\to 0
\end{align}
thanks to \eqref{DeltaU_weak_to0}, \eqref{largetimetheorem_decay_u} and \eqref{utTo0}, proving \eqref{largetimetheorem_decay_pTheta_weak}.

\begin{remark}\label{about-non-conv-theta}
We note that the weak convergence to a hydrostatic equilibrium state, i.e., $\nabla p(t)-\theta(t) e_2\rightharpoonup 0$, is a property that occurs independently of the limiting behavior of the function $\theta$.
As a side note, we observe that our analysis indicates that $\theta$ does not converge to the linear function $\beta x_2+\gamma$ , which would be the natural equilibrium candidate based on the analysis of the fully dissipative system. In fact, the energy identity \eqref{energy_identity} implies
\begin{equation*}
    \frac{d}{dt} \left( \beta \|u\|_{L^2}^2 + \|\theta-\beta x_2-\gamma\|_{L^2}^2\right) =-4 \beta \nu \left(\|\D u\|_{L^2}^2 + \int_{\partial\Omega} \alpha u_\tau^2 \right) < 0.
\end{equation*}
and, in particular, for all $t\in[0,\infty)$ the function $t\rightarrow \|u\|_{L^2}^2 + \|\theta-\beta x_2-\gamma\|_{L^2}^2 $ is a decreasing function of $t$ and 
$$0< \|u\|_{L^2}^2 + \|\theta-\beta x_2-\gamma\|_{L^2}^2 \leq  C_0^2$$
where $C_0^2=\beta \|u_0\|_{L^2}^2 + \|\theta_0-\beta x_2-\gamma\|_{L^2}^2$.
Hence there exists a constant $c_0$ such that 
$$ \|u\|_{L^2}^2 + \|\theta-\beta x_2-\gamma\|_{L^2}^2 \to c_0^2\leq C_0^2$$
Using the fact that $\|u\|_{L^2}^2\to 0$ as $t\to \infty$ we conclude that 
$$ \|\theta-\beta x_2-\gamma\|_{L^2}^2 \to c_0^2\leq C_0^2$$.
\end{remark}

\subsection{Argument for Theorem \ref{new-th-large_time} Part \ref{largetimetheorem_part_decomposed}}

The Hilbert space $L^2$ can be decomposed (for details see \cite{ConstantinFoiasBook}, Chapter 1) in $L^2 = L^2_\sigma \oplus L^2_\#$, where 
\begin{align}
    L^2_\sigma &= \lbrace \xi \in L^2\ \vert \ \nabla \cdot \xi = 0, n\cdot \xi = 0\rbrace,
    \\
    L^2_\# &= \lbrace \nabla \chi \ \vert \ \chi \in H^1, \int_\Omega \chi = 0\rbrace.    
\end{align}
Let $\xi\in L^2_\sigma$ and $\nabla \chi\in L^2_\#$ such that $\theta e_2 = \xi+\nabla \chi$. Then by \eqref{navierStokes}
\begin{align}
    \|\nabla (\chi-p)\|_{L^2}^2 &= \int_{\Omega} \nabla (\chi-p) \cdot (u_t+ u\cdot \nabla u- \nu \Delta u - \xi)
    \\
    &= \int_{\Omega} \nabla (\chi-p) \cdot (u_t+u\cdot\nabla u) - \nu \int_{\Omega} \nabla (\chi-p) \cdot \Delta u,
    \label{weak_conv_proj_id1}
\end{align}
where the term with $\xi$ vanished due to orthogonality of $L^2_\sigma$ and $L^2_\#$. In order to estimate the last term on the right hand-side of \eqref{weak_conv_proj_id1} we use integration by parts, \eqref{incompressibility} and \eqref{nCdotDeltaU_on_boundary} yield
\begin{align}
    \int_\Omega \nabla (\chi-p) \cdot \Delta u &= \int_{\partial\Omega} (\chi-p) n \cdot \Delta u = 2\int_{\partial\Omega} (\chi-p) \tau\cdot\nabla ((\alpha+\kappa) u_\tau).
    \label{weak_conv_proj_id2}
\end{align}
As connected components of the boundary are periodic and $\tau \cdot\nabla$ is the tangential derivative we find
\begin{align}
    2\int_{\partial\Omega} (\chi-p) \tau\cdot\nabla ((\alpha+\kappa) u_\tau) &= - 2\int_{\partial\Omega} \tau\cdot\nabla (\chi-p) ((\alpha+\kappa) u_\tau)
    \\
    &= - 2\int_{\partial\Omega} (\alpha+\kappa) u \cdot\nabla (\chi-p).
    \label{weak_conv_proj_id3}
\end{align}
Combining \eqref{weak_conv_proj_id1}, \eqref{weak_conv_proj_id2} and \eqref{weak_conv_proj_id3} we find
\begin{align}
    \|\nabla (\chi-p)\|_{L^2}^2 &= \int_{\Omega} \nabla (\chi-p) \cdot (u_t+u\cdot\nabla u) + 2\nu \int_{\partial\Omega} (\alpha+\kappa) u \cdot\nabla (\chi-p)
    \\
    &\leq \|\nabla (\chi -p)\|_{L^2}(\|u_t\|_{L^2} + \|u\|_{W^{1,4}}^2) + C\|u\|_{H^1} \|\chi -p\|_{H^1},
\end{align}
where we additionally used Hölder's inequality and Lemma \ref{lemma_preliminaries_uv}. Since $p$ and $\chi$ can be assumed to be average free Poincaré's inequality yields
\begin{align}
    \|\nabla (\chi-p)\|_{L^2}^2 &\leq \|\nabla (\chi -p)\|_{L^2}(\|u_t\|_{L^2} + \|u\|_{W^{1,4}}^2+C\|u\|_{H^1})
\end{align}
implying
\begin{align}
    \|\nabla (\chi-p)\|_{L^2} &\leq \|u_t\|_{L^2} + \|u\|_{W^{1,4}}^2+C\|u\|_{H^1} \to 0
    \label{oneMinusPthetaMinusGradP_to0}
\end{align}
by \eqref{utTo0} and \eqref{largetimetheorem_decay_u}, proving \eqref{largetimetheorem_decay_p_grad_strong}.

For any $g\in L^2$, using the equation for $u$ we have
\begin{align}
    |\langle \xi, g\rangle| &= |\langle \theta e_2 - \nabla \chi, g \rangle|
    \\
    &= |\langle u_t + u\cdot \nabla u - \nu \Delta u + \nabla p - \nabla \chi, g \rangle|
    \\
    &\leq \nu |\langle \Delta u,g\rangle| + (\|u_t\|_{L^2}+\|u\|_{W^{1,4}}^2 + \|\nabla (\chi-p)\|_{L^2}) \|g\|_{L^2}
    \\
    &\to 0
\end{align}
thanks to \eqref{DeltaU_weak_to0}, \eqref{utTo0}, \eqref{largetimetheorem_decay_u} and \eqref{oneMinusPthetaMinusGradP_to0}, proving \eqref{largetimetheorem_decay_Projtheta_weak}.

Finally by \eqref{navierStokes} and Hölder's inequality
\begin{align}
    \|\nu \Delta u + \xi\|_{L^2} &= \|\Delta u + \theta e_2 - \nabla \chi\|_{L^2}
    \\
    &= \|u_t + u\cdot\nabla u + \nabla p - \nabla \chi\|_{L^2}
    \\
    &\leq \|u_t\|_{L^2} + \|u\|_{W^{1,4}}^2 + \|\nabla (p-\chi)\|_{L^2} \to 0
\end{align}
thanks to \eqref{utTo0}, \eqref{largetimetheorem_decay_u} and \eqref{oneMinusPthetaMinusGradP_to0}, proving \eqref{largetimetheorem_decay_DeltaUprojTheta}.

\subsection{Argument for Theorem \ref{new-th-large_time} Part \ref{largetimetheorem_part_assume}}\hypertarget{proof_convergence_theorem_part_assumption}{The} claims follow easily from the fact that $\xi\rightarrow 0$ strongly in $L^2$. In fact, using the assumption $\lim_{t\rightarrow \infty}\|\theta e_2-\bar{\theta}e_2\|_{L^2}=0$ and that $\langle\xi(t),\nabla \chi(t)\rangle=0$ for all $t$ by orthogonality, then
\begin{align*}
\|\xi\|_{L^2}^2&=\langle\theta e_2-\nabla \chi, \xi\rangle\\
&=\langle\theta e_2-\bar{\theta}e_2, \xi\rangle+\langle\bar{\theta}e_2, \xi\rangle\\
&\leq\|\theta e_2-\bar{\theta}e_2\|_{L^2}\|\xi\|_{L^2}+\langle\bar{\theta}e_2, \xi\rangle\rightarrow 0 \mbox{ as } t\rightarrow \infty
\end{align*}
where we used that, thanks to the weak convergence of $\xi$ to zero (from part \eqref{largetimetheorem_decay_Projtheta_weak}),  $\|\xi\|_{L^2}\leq C$ and $\langle\bar{\theta}e_2, \xi\rangle\rightarrow 0$.

\section{Proof of Theorem \ref{theorem_linearStability}: Linear Stability}
\label{section_linear_stability}
As described in the Introduction, in this section we explore linear stability of the hydrostatic equilibrium, under the following conditions:
\begin{enumerate}
    \item The domain is a periodic strip, $\Omega=\mathbb{T}\times (0,h)$,
    \item The friction coefficient $\alpha$ is constant.
\end{enumerate}
Due to the flat boundaries the curvature vanishes, $n=(0,n_2)$, $\tau = (-n_2,0)$ and $n_2=1$, $n_2=-1$ on the top and bottom boundary respectively.
From these assumptions it follows that the boundary conditions simplify to
\begin{alignat}{2}
    \partial_{2}U_1&=-2\alpha U_1 & \qquad &\mbox{ at } x_2=h,
    \\
    \partial_{2}U_1&=2\alpha U_1 & \qquad &\mbox{ at } x_2=0.
\end{alignat}
Additionally due to \eqref{linearized_pde_theta} and \eqref{linearized_no_penetration} $\Theta$ satisfies $\Theta(t)=\Theta_0$ on the boundaries, implying
\begin{align}
    \partial_1 \Theta &= 0
    \label{linearized_partial1Theta_0_on_boundary}
\end{align}
on $\partial\Omega$.

The corresponding vorticity $\linomega = \nabla^\perp \cdot U$,
analogously to \eqref{vorticity_eq} and \eqref{vorticity_bc}, fulfills
\begin{alignat}{2}
    \linomega_t -\nu\Delta \linomega &= \partial_1\Theta & \qquad\text{ in }&\Omega,\label{lin_vorticity_eq}\\
    \linomega &= 2\alpha U_1 & \qquad\text{ at }&x_2 = h,
    \\
    \linomega &= -2\alpha U_1 & \qquad\text{ at }&x_2 = 0.
\end{alignat}
In order to keep the compact notations of the previous section, for the boundary terms we will write
\begin{alignat}{2}
    n_2\partial_{2}U_1&=-2\alpha U_1 & \qquad &\mbox{ at } \partial\Omega,
    \\
    \linomega&=-2\alpha U_\tau & \qquad &\mbox{ at } \partial\Omega.
    \label{lin_vorticity_bc}
\end{alignat}
In this section we will also use the notation $f\lesssim g$, if there exists a constant $C>0$, potentially depending on $\nu$, $\alpha$ and $\Omega$, such that $f\leq C g$. Notice that similar to Lemma \ref{lemma_L2H1_bound}, Lemma \ref{lemma_ut_bound} and Lemma \ref{lemma_pressure_bound}
\begin{align}
    U&\in L^\infty\left((0,\infty);L^2(\Omega)\right) \cap L^2\left((0,\infty);H^1(\Omega)\right)\label{linearized_u_pre_reg}
    \\
    \Theta &\in L^\infty\left((0,\infty);L^2(\Omega)\right)\label{linearized_theta_pre_reg}
    \\
    U_t&\in L^\infty\left((0,\infty);L^2(\Omega)\right) \cap L^2\left((0,\infty);H^1(\Omega)\right)\label{linearized_ut_pre_reg}
    \\
    \|P\|_{H^1}^2&\lesssim \|U\|_{H^1}^2 + \|\Theta\|_{L^2}^2\label{linearized_p_pre_est}
\end{align}
and using \eqref{linearized_theta_pre_reg} and a corresponding estimate as in Lemma \ref{lemma_energy_bound} $\|U\|_{L^2}^2$ satisfies the assumptions of Lemma \ref{lemma_L1_Nonnegative_UniformlyCont_Implies_Decay}, which implies
\begin{align}
    \label{linearized_uL2_tempTo0}
    \|U\|_{L^2}\to 0.
\end{align}
By Lemma \ref{preliminaries_lemma_elliptic_regularity}, \eqref{nablaOmega_minusDeltaUperp}, \eqref{linearized_pde_u}, \eqref{linearized_p_pre_est}, Gagliardo-Nirenberg interpolation and Young's inequality
\begin{equation*}
    \begin{aligned}
        \|U\|_{H^2}^2 &\lesssim \|\nabla\linomega\|_{L^2}^2 +\|U\|_{H^1}^2\lesssim \|\Delta U\|_{L^2}^2 + \|U\|_{H^1}^2\lesssim \|U\|_{H^1}^2 + \|P\|_{H^1}^2 +\|\Theta\|_{L^2}^2 + \|U_t\|_{L^2}^2 
        \\
        &\lesssim  \|U\|_{H^1}^2 +\|\Theta\|_{L^2}^2 + \|U_t\|_{L^2}^2 \lesssim \epsilon \|U\|_{H^2}^2+\left(1+\epsilon^{-1}\right)\|U\|_{L^2}^2 +\|\Theta\|_{L^2}^2 + \|U_t\|_{L^2}^2 
    \end{aligned}
\end{equation*}
for any $\epsilon >0$. Choosing $\epsilon$ sufficiently small we can absorb the $H^2$ term on the right-hand side and using \eqref{linearized_u_pre_reg}, \eqref{linearized_theta_pre_reg} and \eqref{linearized_ut_pre_reg} find
\begin{align}
    \label{linearized_uH2_tempreg}
    U\in L^\infty\left((0,\infty);H^2(\Omega)\right).
\end{align}
Again using Gagliardo-Nirenberg interpolation we get $H^1$-decay
\begin{align*}
    \|U\|_{H^1}^2 \lesssim \|U\|_{H^2}\|U\|_{L^2} \to 0,
\end{align*}
where we used \eqref{linearized_uH2_tempreg} and \eqref{linearized_uL2_tempTo0}. Combining these estimates we get the regularity results
\begin{align}
    U&\in L^\infty\left((0,\infty);H^2(\Omega)\right) \cap L^2\left((0,\infty);H^1(\Omega)\right)\label{linearized_u_reg}
    \\
    \Theta &\in L^\infty\left((0,\infty);L^2(\Omega)\right)\label{linearized_theta_reg}
    \\
    U_t&\in L^\infty\left((0,\infty);L^2(\Omega)\right) \cap L^2\left((0,\infty);H^1(\Omega)\right)\label{linearized_ut_reg}
    \\
    P &\in L^\infty\left((0,\infty);H^1(\Omega)\right)\label{linearized_p_reg}
\end{align}
and
\begin{align}
    \label{linearized_u_H1_to_0}
    \|U(t)\|_{H^1}&\to 0
\end{align}
for $t\to\infty$ similar to the nonlinear system.%

\begin{proof}[\hypertarget{proof_theorem_linearStability}{Proof of Theorem \ref{theorem_linearStability}.}]

First, in order to show the regularity of $U_t$ we need to derive bounds for $P_t$. In order to do so notice that taking the divergence and time derivative of \eqref{linearized_pde_u} and using the incompressibility and \eqref{linearized_pde_theta}
\begin{align}
    \label{linearized_pde_laplace_P}
    \Delta P_t = \partial_2 \Theta_t = - \beta \partial_2 U_2.
\end{align}
Using integration by parts and \eqref{linearized_pde_laplace_P}
\begin{align}
    \label{linearzed_pt_id}
    \|\nabla P_t\|_{L^2}^2 = \int_{\partial\Omega} P_t n\cdot \nabla P_t - \int_{\Omega} P_t \Delta P_t = \int_{\partial\Omega} P_t n\cdot \nabla P_t + \beta\int_{\Omega} P_t \partial_2 U_2.
\end{align}
In order to calculate the boundary term notice that similar to \eqref{pressure_bc} $P_t$ fulfills
\begin{align}
    \label{linearized_pt_bc}
    n\cdot \nabla P_t = 2\nu \tau\cdot \nabla (\alpha U_t\cdot \tau) + n_2 \Theta_t = 2\nu \tau\cdot \nabla (\alpha U_t\cdot \tau) - \beta n_2 U_2.
\end{align}
Plugging \eqref{linearized_pt_bc} into \eqref{linearzed_pt_id} and using the periodicity of the boundary we find
\begin{equation}
    \label{linearzed_pt_id_2}
    \begin{aligned}
        \|\nabla P_t\|_{L^2}^2 &= 2\nu  \int_{\partial\Omega} P_t \tau\cdot \nabla (\alpha U_t\cdot \tau) - \beta \int_{\partial\Omega} P_t n_2 U_2 + \beta\int_{\Omega} P_t \partial_2 U_2
        \\
        &= -2\nu  \int_{\partial\Omega} \alpha U_t \cdot \nabla  P_t  - \beta \int_{\partial\Omega} P_t n_2 U_2 + \beta\int_{\Omega} P_t \partial_2 U_2
        \\
        &\lesssim \|P_t\|_{H^1}\|U_t\|_{H^1} + \|P_t\|_{H^1}\|U\|_{H^1},
    \end{aligned}
\end{equation}
where in the last inequality we used Lemma \ref{lemma_nablaPcdotU_onBoundary}, trace Theorem and Hölder's inequality. As $P$ is only defined up to a constant we can choose it to be average free, which also implies that $P_t$ has vanishing average. Therefore Poincaré's inequality and \eqref{linearzed_pt_id_2} imply
\begin{align}
    \label{linearzed_pt_bound} 
    \|P_t\|_{H^1}^2 \lesssim \|\nabla P_t\|_{L^2}^2 \lesssim \|P_t\|_{H^1}\|U_t\|_{H^1} + \|P_t\|_{H^1}\|U\|_{H^1},
\end{align}
which after dividing by $\|P\|_{H^1}$ yields $\|P_t\|_{H^1} \lesssim \|U_t\|_{H^1} + \|U\|_{H^1}$.

With a bound for the time derivative of the pressure at hand we can estimate $\linomega_t$. Taking the time derivative of \eqref{lin_vorticity_eq}, testing with $\linomega_t$ and using \eqref{linearized_pde_theta} we find
\begin{align}
    \label{linearzed_omegat_id_1}
    \frac{1}{2}\frac{d}{dt}\|\linomega_t\|_{L^2}^2 = \nu \int_{\Omega} \linomega_t \Delta \linomega_t + \int_{\Omega} \linomega_t \partial_1 \Theta_t = \nu \int_{\Omega} \linomega_t \Delta \linomega_t - \beta \int_{\Omega} \linomega_t \partial_1 U_2.
\end{align}
We first focus on the first term on the right-hand side of \eqref{linearzed_omegat_id_1}. Partial integration yields
\begin{equation}
    \label{linearzed_omegat_id_2}
    \begin{aligned}
        \nu\int_{\Omega} \linomega_t \Delta \linomega_t
        &= \nu \int_{\partial\Omega} \linomega_t n\cdot \nabla \linomega_t - \nu\|\nabla \linomega_t\|_{L^2}^2.
    \end{aligned}
\end{equation}
Similar to \eqref{nablaOmega_along_boundary} we find
\begin{align}
    \label{linearzed_omegat_id_3}
    \nu n\cdot \nabla \linomega_t = \tau\cdot U_{tt} + \tau \cdot \nabla P_t - \tau_2 \Theta_t = \tau\cdot U_{tt} + \tau \cdot \nabla P_t + \beta \tau_2 U_2.
\end{align}
Plugging \eqref{linearzed_omegat_id_2} and \eqref{linearzed_omegat_id_3} into \eqref{linearzed_omegat_id_1} and, since by \eqref{lin_vorticity_bc}, $\linomega_t = -2 \alpha \tau \cdot U_t$,
\begin{equation*}
    \begin{aligned}
        \frac{1}{2}\frac{d}{dt}&\|\linomega_t\|_{L^2}^2 + \nu\|\nabla \linomega_t\|_{L^2}^2
        \\
        &= \int_{\partial\Omega} \linomega_t \tau\cdot U_{tt} + \int_{\partial\Omega} \linomega_t \tau \cdot \nabla P_t + \beta \int_{\partial\Omega} \linomega_t\tau_2 U_2 - \beta \int_{\Omega} \linomega_t \partial_1 U_2
        \\
        &= -2\int_{\partial\Omega} \alpha\tau\cdot U_t\tau\cdot U_{tt} - 2 \int_{\partial\Omega} \alpha U_t \cdot \nabla P_t + \beta \int_{\partial\Omega} \linomega_t\tau_2 U_2 - \beta \int_{\Omega} \linomega_t \partial_1 U_2
        \\
        &= - \frac{d}{dt}\int_{\partial\Omega} \alpha(\tau\cdot U_t)^2 - 2 \int_{\partial\Omega} \alpha U_t \cdot \nabla P_t + \beta \int_{\partial\Omega} \linomega_t\tau_2 U_2 - \beta \int_{\Omega} \linomega_t \partial_1 U_2,
    \end{aligned}
\end{equation*}
which we can estimate by Lemma \ref{lemma_nablaPcdotU_onBoundary}, trace theorem and Hölder's inequality as
\begin{equation*}
    \begin{aligned}
        \frac{1}{2}\frac{d}{dt}&\left(\|\linomega_t\|_{L^2}^2 +2 \int_{\partial\Omega} \alpha(\tau\cdot U_t)^2\right)+ \nu\|\nabla \linomega_t\|_{L^2}^2
        \\
        &\qquad\qquad\lesssim \|U_t\|_{H^1}\|P_t\|_{H^1} + \|U_t\|_{H^2}\|U\|_{H^1} + \|U_t\|_{H^1}\|U\|_{H^1}.
    \end{aligned}
\end{equation*}
\eqref{linearzed_pt_bound} and Young's inequality yield
\begin{equation*}
    \begin{aligned}
        \frac{1}{2}\frac{d}{dt}&\left(\|\linomega_t\|_{L^2}^2 +2 \int_{\partial\Omega} \alpha(\tau\cdot U_t)^2\right)+ \nu\|\nabla \linomega_t\|_{L^2}^2
        \\
        &\qquad\qquad\lesssim \|U_t\|_{H^1}^2 + \|U_t\|_{H^2}\|U\|_{H^1} + \|U_t\|_{H^1}\|U\|_{H^1}
        \\
        &\qquad\qquad\lesssim \|U_t\|_{H^1}^2 + \epsilon\|U_t\|_{H^2}^2 + (1+\epsilon^{-1})\|U\|_{H^1}^2
    \end{aligned}
\end{equation*}
for any $\epsilon>0$. Next by Lemmas \ref{lemma_preliminaries_uv} and \ref{preliminaries_lemma_elliptic_regularity} and choosing $\epsilon$ sufficiently small,
\begin{equation}
    \begin{aligned}
        \frac{d}{dt}\left(\|\D U_t\|_{L^2}^2 + \int_{\partial\Omega} \alpha(\tau\cdot U_t)^2\right) + \nu\|U_t\|_{H^2}^2 &\lesssim \|U_t\|_{H^1}^2 + \epsilon\|U_t\|_{H^2}^2 + (1+\epsilon^{-1})\|U\|_{H^1}^2
        \\
        \label{linearzed_omegat_id_4}
    \frac{d}{dt}\left(\|\D U_t\|_{L^2}^2 +  \int_{\partial\Omega} \alpha(\tau\cdot U_t)^2\right)+ \nu\|U_t\|_{H^2}^2 &\lesssim \|U_t\|_{H^1}^2 + \|U\|_{H^1}^2.
    \end{aligned}
\end{equation}
Integrating \eqref{linearzed_omegat_id_4} in time and noticing that the right-hand side is uniformly bounded in time by \eqref{linearized_ut_reg} and \eqref{linearized_u_reg} we find
\begin{align}
    \label{linearized_Ut_L2H2}
    U_t\in L^2\left((0,\infty);H^2(\Omega)\right)
\end{align}
as the bracket on the left-hand side of \eqref{linearzed_omegat_id_4} is positive since $\alpha>0$. In fact the equivalent statement to \eqref{lemma_coercivity} yields
\begin{align*}
    \|U_t\|_{H^1}^2 \lesssim \|\D U_t\|_{L^2}^2 + \int \alpha (\tau \cdot U_t)^2,
\end{align*}
implying
\begin{align}
    \label{linearized_Ut_LinftH1}
    U_t \in L^\infty\left((0,\infty);H^1(\Omega)\right).
\end{align}

Using \eqref{linearzed_omegat_id_4} we find that $\|\D U_t\|_{L^2}^2+\int \alpha (\tau \cdot U_t)^2$ satisfies Lemma \ref{lemma_L1_Nonnegative_UniformlyCont_Implies_Decay}, which implies
\begin{align*}
    \|\D U_t\|_{L^2}^2+\int \alpha (\tau \cdot U_t)^2 \to 0.
\end{align*}
Therefore Lemma \ref{lemma_coercivity} yields
\begin{align}
    \label{linearized_Ut_to_0}
    \|U_t\|_{H^1} \to 0
\end{align}
for $T\to \infty$.
Next, we show that $\|\nabla\Theta\|_{L^2}$ is uniformly bounded in time, which will be used to prove the decay of $\|U\|_{H^2}$. In order to do so we first prove the following two identities
\begin{align}
    \frac{1}{2} \frac{d}{dt} \left(\|\linomega\|_{L^2}^2+ \frac{1}{\beta}\|\nabla \Theta\|_{L^2}^2 \right) + \nu \|\nabla\linomega\|_{L^2}^2 &= - 2\nu \int_{\partial\Omega} \alpha U\cdot\Delta U,
    \label{linearized_id_A}
    \\
    \hspace{-5pt}\frac{1}{2}\frac{d}{dt}\left(\|\nabla \linomega\|_{L^2}^2 + \frac{1}{\beta} \|\nabla^2 \Theta\|_{L^2}^2 + \frac{2}{\beta} \int_{\partial\Omega} \alpha (\partial_2 \Theta)^2\right) + \nu \|\Delta \linomega\|_{L^2}^2 &= - 2 \nu \int_{\partial\Omega} \alpha U_t\cdot \Delta U.
    \label{linearized_id_B}
\end{align}
We first show \eqref{linearized_id_A}: taking the gradient of \eqref{linearized_pde_theta} one has
\begin{align}
	\nabla \Theta_t + \beta \nabla U_2 &= 0.
	\label{linearized_nablaTheta_pde}
\end{align}
Testing \eqref{linearized_nablaTheta_pde} with $\frac{1}{\beta} \nabla \Theta$ and \eqref{lin_vorticity_eq} with $\linomega$ and adding them we obtain
\begin{align}
	\frac{1}{2} \frac{d}{dt} \left(\|\linomega\|_{L^2}^2+ \frac{1}{\beta}\|\nabla \Theta\|_{L^2}^2 \right) &= \nu \int_{\Omega} \linomega \Delta \linomega + \int_{\Omega} \linomega \partial_1 \Theta -  \int_{\Omega} \nabla \Theta \cdot \nabla U_2.
	\label{linearized_H1_id_a}
\end{align}
We first focus on the last two terms on the right-hand side \eqref{linearized_H1_id_a}. By the definition of $\linomega$ and \eqref{linearized_incompressible}
\begin{equation}
    \label{linearized_H1_id_b}
    \begin{aligned}
    	\int_{\Omega} \linomega \partial_1 \Theta -  \int_{\Omega} \nabla \Theta \cdot \nabla U_2 &= \int_{\Omega} (-\partial_2 U_1 + \partial_1 U_2) \partial_1 \Theta - \int_{\Omega} \partial_1 \Theta \partial_1 U_2 - \int_\Omega \partial_2 \Theta \partial_2 U_2
    	\\
    	&= - \int_{\Omega} \partial_2 U_1 \partial_1 \Theta - \int_\Omega \partial_2 U_2 \partial_2 \Theta 
    	\\
    	&= - \int_{\Omega} \partial_2 U_1 \partial_1 \Theta + \int_\Omega \partial_1 U_1 \partial_2 \Theta.        
    \end{aligned}
\end{equation}
Integrating by parts twice, using \eqref{linearized_partial1Theta_0_on_boundary} and the periodicity in the horizontal direction, these terms cancel as
\begin{equation}
    \label{linearized_H1_id_c}
    \begin{aligned}
        - \int_{\Omega} \partial_2 U_1 \partial_1 \Theta + \int_\Omega \partial_1 U_1 \partial_2 \Theta &= - \int_{\partial\Omega} n_2 U_1 \partial_1 \Theta + \int_{\Omega} U_1 \partial_1\partial_2 \Theta + \int_{\Omega} \partial_1 U_1 \partial_2 \Theta
        \\
        &= - \int_{\Omega} \partial_1 U_1 \partial_2 \Theta + \int_{\Omega} \partial_1 U_1 \partial_2 \Theta = 0.
    \end{aligned}
\end{equation}
In order to treat the first term on the right-hand side of \eqref{linearized_H1_id_a} notice that by \eqref{nablaOmega_minusDeltaUperp}
\begin{align}
    n \cdot \nabla \linomega = - n \cdot \Delta U^\perp = \tau \cdot \Delta U
    \label{linearized_nnablaomega_minustauDeltaU}
\end{align}
and therefore, using integration by parts, \eqref{lin_vorticity_bc} and \eqref{nablaOmega_minusDeltaUperp}, it follows that
\begin{equation}
    \label{linearized_H1_id_d}
    \begin{aligned}
        \nu \int_\Omega \linomega \Delta \linomega
        &= - \nu \|\nabla \linomega \|_{L^2}^2 + \nu \int_{\partial\Omega} \linomega n \cdot \nabla \linomega
        =  - \nu \|\nabla \linomega \|_{L^2}^2 - 2 \nu \int_{\partial\Omega} \alpha U_\tau n \cdot \nabla \linomega
        \\
        &=  - \nu \|\nabla \linomega \|_{L^2}^2 - 2 \nu \int_{\partial\Omega} \alpha U_\tau \tau \cdot \Delta U
        %
        =  - \nu \|\nabla \linomega \|_{L^2}^2 - 2 \nu \int_{\partial\Omega} \alpha U \cdot \Delta U.
    \end{aligned}
\end{equation}
Therefore combining \eqref{linearized_H1_id_a}, \eqref{linearized_H1_id_b}, \eqref{linearized_H1_id_c} and \eqref{linearized_H1_id_d} results in
\begin{align*}
    \frac{1}{2} \frac{d}{dt} \left(\|\linomega\|_{L^2}^2+ \frac{1}{\beta}\|\nabla \Theta\|_{L^2}^2 \right) + \nu \|\nabla\linomega\|_{L^2}^2 &= - 2\nu \int_{\partial\Omega} \alpha U\cdot\Delta U,
\end{align*}
proving the first identity, i.e. \eqref{linearized_id_A}. 

Now we show \eqref{linearized_id_B}: taking the gradient of \eqref{lin_vorticity_eq} one has
\begin{align}
    \nabla \linomega_t - \nu \nabla \Delta \linomega &= \nabla \partial_1 \Theta
    \label{linearized_nabla_omega_pde}
\end{align}
and similar for \eqref{linearized_nablaTheta_pde}
\begin{align}
    \nabla^2 \Theta_t + \beta \nabla^2 U_2 &= 0.
    \label{linearized_hessian_theta_pde}
\end{align}
By \eqref{linearized_nabla_omega_pde}, \eqref{linearized_hessian_theta_pde} and \eqref{linearized_nablaTheta_pde} one obtains
\begin{equation}
    \label{linearized_H2_id_a}
    \begin{aligned}
        \frac{1}{2}\frac{d}{dt}&\left(\|\nabla \linomega\|_{L^2}^2 + \frac{1}{\beta} \|\nabla^2 \Theta\|_{L^2}^2 + \frac{2}{\beta} \int_{\partial\Omega} \alpha (\partial_2 \Theta)^2\right) 
        \\
        &= \nu \int_{\Omega} \nabla \linomega \cdot\nabla \Delta \linomega + \int_{\Omega} \nabla \linomega \cdot \nabla \partial_1 \Theta - \int_{\Omega} \nabla^2\Theta \colon \nabla^2 U_2 - 2 \int_{\partial\Omega} \alpha \partial_2\Theta \partial_2 U_2.
    \end{aligned}
\end{equation}
Using integration by parts, \eqref{linearized_nnablaomega_minustauDeltaU} and \eqref{lin_vorticity_eq}, the first term on the right-hand side of \eqref{linearized_H2_id_a} satisfies
\begin{equation}
    \label{linearized_H2_id_b}
    \begin{aligned}
        \nu \int_{\Omega} \nabla \linomega \cdot\nabla \Delta \linomega &= - \nu \|\Delta \linomega \|_{L^2}^2 + \nu \int_{\partial\Omega} \nabla \linomega \cdot n \Delta \linomega
        \\
        &= - \nu \|\Delta \linomega \|_{L^2}^2 + \nu \int_{\partial\Omega} \tau \cdot \Delta U \Delta \linomega
        \\
        &= - \nu \|\Delta \linomega \|_{L^2}^2 + \nu \int_{\partial\Omega} \tau \cdot \Delta U (\linomega_t-\partial_1 \Theta)
        \\
        &= - \nu \|\Delta \linomega \|_{L^2}^2 + \nu \int_{\partial\Omega} \tau \cdot \Delta U \linomega_t,
    \end{aligned}
\end{equation}
where in the last identity we used that $\partial_1 \Theta$ vanishes on $\partial\Omega$ by \eqref{linearized_partial1Theta_0_on_boundary}. Deriving \eqref{lin_vorticity_bc} with respect to time one has
\begin{align*}
    \linomega_t = -2\alpha \tau \cdot U_t
\end{align*}
on $\partial\Omega$, which combined with \eqref{linearized_H2_id_b} implies
\begin{align}
    \nu \int_{\Omega} \nabla \linomega \cdot\nabla \Delta \linomega &= - \nu \|\Delta \linomega \|_{L^2}^2 - 2 \nu \int_{\partial\Omega} \alpha U_t\cdot \Delta U.
    \label{linearized_H2_id_b2}
\end{align}
Next we will show that the last three terms on the right-hand side of \eqref{linearized_H2_id_a} cancel. By the definition of $\linomega$
\begin{equation}
    \label{linearized_H2_id_c}
    \begin{aligned}
        \hspace{-5pt}\int_{\Omega} \nabla \linomega \cdot \nabla \partial_1 \Theta - \int_{\Omega} \nabla^2\Theta \colon \nabla^2 U_2 
        &= \int_{\Omega} \nabla (-\partial_2 U_1+\partial_1 U_2) \cdot \nabla \partial_1 \Theta - \int_{\Omega} \nabla^2\Theta \colon \nabla^2 U_2
        \\
        &= -\int_{\Omega} \nabla \partial_2 U_1 \cdot \nabla \partial_1 \Theta - \int_{\Omega} \nabla \partial_2 \Theta \cdot \nabla \partial_2 U_2
        \\
        &= -\int_{\Omega} \nabla \partial_2 U_1 \cdot \nabla \partial_1 \Theta + \int_{\Omega} \nabla \partial_2 \Theta \cdot \nabla \partial_1 U_1,
    \end{aligned}
\end{equation}
where in the last identity we used \eqref{linearized_incompressible}. Integrating by parts twice, using the periodicity of the domain and \eqref{linearized_partial1Theta_0_on_boundary}, yields
\begin{align*}
    -\int_{\Omega} \partial_1 \partial_2 U_1 \partial_1^2 \Theta &= \int_{\Omega} \partial_1^2 \partial_2 U_1 \partial_1 \Theta = \int_{\partial\Omega} n_2 \partial_1^2 U_1 \partial_1 \Theta - \int_{\Omega} \partial_1^2 U_1 \partial_1 \partial_2 \Theta
    \\
    &= - \int_{\Omega} \partial_1^2 U_1 \partial_1 \partial_2 \Theta,
\end{align*}
which combined with \eqref{linearized_H2_id_c} implies
\begin{align}
    \int_{\Omega} \nabla \linomega \cdot \nabla \partial_1 \Theta - \int_{\Omega} \nabla^2\Theta \colon \nabla^2 U_2 
    &= -\int_{\Omega} \partial_2^2 U_1 \partial_1 \partial_2 \Theta + \int_{\Omega} \partial_2^2 \Theta \partial_1 \partial_2 U_1.
    \label{linearized_H2_id_d}
\end{align}
Using integration by parts for both terms, the periodicity of the domain and that by \eqref{linearized_navier_slip} $U$ satisfies
\begin{align*}
    n_2 \partial_1 \partial_2 U_1 + 2 \alpha \partial_1 U_1 = 0
\end{align*}
on $\partial\Omega$, we find
\begin{equation}
    \label{linearized_H2_id_e}
    \begin{aligned}
        -\int_{\Omega} \partial_2^2 U_1 \partial_1 \partial_2 \Theta &+ \int_{\Omega} \partial_2^2 \Theta \partial_1 \partial_2 U_1
        \\
        &=  \int_{\Omega}  \partial_1\partial_2^2 U_1 \partial_2 \Theta + \int_{\partial\Omega} n_2\partial_2 \Theta \partial_1 \partial_2 U_1 - \int_{\Omega} \partial_2 \Theta \partial_1 \partial_2^2 U_1
        \\
        &=  \int_{\Omega}  \partial_1\partial_2^2 U_1 \partial_2 \Theta - 2 \int_{\partial\Omega} \alpha\partial_2 \Theta \partial_1 U_1 - \int_{\Omega} \partial_2 \Theta \partial_1 \partial_2^2 U_1
        \\
        &= 2 \int_{\partial\Omega} \alpha\partial_2 \Theta \partial_2 U_2,
    \end{aligned}
\end{equation}
where in the last identity we used \eqref{linearized_incompressible}. Therefore combining \eqref{linearized_H2_id_d} and \eqref{linearized_H2_id_e} results in
\begin{equation}
     \label{linearized_H2_id_f}
    \begin{aligned}
         \int_{\Omega} \nabla \linomega \cdot \nabla \partial_1 \Theta &- \int_{\Omega} \nabla^2\Theta \colon \nabla^2 U_2 - 2 \int_{\partial\Omega} \alpha \partial_2\Theta \partial_2 U_2
         \\
         &=-\int_{\Omega} \partial_2^2 U_1 \partial_1 \partial_2 \Theta + \int_{\Omega} \partial_2^2 \Theta \partial_1 \partial_2 U_1 - 2 \int_{\partial\Omega} \alpha \partial_2\Theta \partial_2 U_2
         \\
         &=  2 \int_{\partial\Omega} \alpha\partial_2 \Theta \partial_2 U_2 - 2 \int_{\partial\Omega} \alpha \partial_2\Theta \partial_2 U_2 = 0,
    \end{aligned}
\end{equation}
i.e. the last three terms on the right-hand side of \eqref{linearized_H2_id_a} cancel. Therefore by \eqref{linearized_H2_id_a}, \eqref{linearized_H2_id_b2} and \eqref{linearized_H2_id_f} one has
\begin{align*}
    \hspace{-5pt}\frac{1}{2}\frac{d}{dt}&\left(\|\nabla \linomega\|_{L^2}^2 + \frac{1}{\beta} \|\nabla^2 \Theta\|_{L^2}^2 + \frac{2}{\beta} \int_{\partial\Omega} \alpha (\partial_2 \Theta)^2\right) + \nu \|\Delta \linomega\|_{L^2}^2 = - 2 \nu \int_{\partial\Omega} \alpha U_t\cdot \Delta U,
\end{align*}
proving the second identity, i.e. \eqref{linearized_id_B}.

Combining \eqref{linearized_id_A} and \eqref{linearized_id_B} we obtain
\begin{equation}
    \label{linearized_combination_a}
    \begin{aligned}
        \frac{1}{2}\frac{d}{dt}&\left(\|\nabla \linomega\|_{L^2}^2 + \|\linomega\|_{L^2}^2 + \frac{1}{\beta} \|\nabla^2 \Theta\|_{L^2}^2 + \frac{1}{\beta}\|\nabla \Theta\|_{L^2}^2  + \frac{2}{\beta} \int_{\partial\Omega} \alpha (\partial_2 \Theta)^2\right)
        \\
        &\qquad+ \nu \|\Delta \linomega\|_{L^2}^2 + \nu \|\nabla\linomega\|_{L^2}^2
        \\
        &= - 2 \nu \int_{\partial\Omega} \alpha (U+U_t)\cdot \Delta U
    \end{aligned}
\end{equation}
and, using trace estimate and Young's inequality, we have
\begin{equation}
     \label{linearized_combination_b}
    \begin{aligned}
         \nu \int_{\partial\Omega} |\alpha (U+U_t)\cdot \Delta U| &\leq \alpha \nu C (\|U\|_{H^1}+\|U_t\|_{H^1})\|U\|_{H^3}
         \\
         &\leq \nu \epsilon \|U\|_{H^3}^2 + \epsilon^{-1} \nu \alpha^2 C (\|U\|_{H^1}^2 + \|U_t\|_{H^1}^2)
    \end{aligned}
\end{equation}
for any $\epsilon>0$. By Lemma \ref{preliminaries_lemma_elliptic_regularity}, proven in the appendix, there exist constants $C,C_\alpha>0$ such that
\begin{align}
    \|U\|_{H^3}^2 \leq C \|\Delta \linomega\|_{L^2}^2 + C_\alpha \|U\|_{H^2}^2 \leq C \|\Delta \linomega\|_{L^2}^2 + C_\alpha \|\nabla \linomega\|_{L^2}^2 + C_\alpha \|U\|_{H^1}^2,
    \label{linearized_combinatino_H3_estimate}
\end{align}
which combined with \eqref{linearized_combination_a} and \eqref{linearized_combination_b} yields
\begin{equation}
    \label{linearized_combination_c}
    \begin{aligned}
        \frac{1}{2}\frac{d}{dt}&\left(\|\nabla \linomega\|_{L^2}^2 + \|\linomega\|_{L^2}^2 + \frac{1}{\beta} \|\nabla^2 \Theta\|_{L^2}^2 + \frac{1}{\beta}\|\nabla \Theta\|_{L^2}^2  + \frac{2}{\beta} \int_{\partial\Omega} \alpha (\partial_2 \Theta)^2\right)
        \\
        &\qquad+ \nu \|\Delta \linomega\|_{L^2}^2 + \nu \|\nabla\linomega\|_{L^2}^2
        \\
        &\leq C\epsilon \nu \|\Delta \linomega\|_{L^2}^2 + C_\alpha \epsilon \nu \|\nabla\linomega\|_{L^2}^2 + \tilde C_\alpha (1+\epsilon^{-1})\nu (\|U\|_{H^1}^2+\|U_t\|_{H^1}^2).
    \end{aligned}
\end{equation}
Choosing $\epsilon = \frac{1}{2(C+C_\alpha)}$ the $\linomega$ terms on the right-hand side of \eqref{linearized_combination_c} can be absorbed and therefore
\begin{equation}
    \label{linearized_combination_d}
    \begin{aligned}
        \frac{d}{dt}&\left(\|\nabla \linomega\|_{L^2}^2 + \|\linomega\|_{L^2}^2 + \frac{1}{\beta} \|\nabla^2 \Theta\|_{L^2}^2 + \frac{1}{\beta}\|\nabla \Theta\|_{L^2}^2  + \frac{2}{\beta} \int_{\partial\Omega} \alpha (\partial_2 \Theta)^2\right)
        \\
        &\qquad+ \nu \|\Delta \linomega\|_{L^2}^2 + \nu \|\nabla\linomega\|_{L^2}^2
        \\
        &\lesssim \|U\|_{H^1}^2+\|U_t\|_{H^1}^2
    \end{aligned}
\end{equation}
holds. Integrating in time $t\in(0,T)$ for any $T>0$ and using that the right-hand side of \eqref{linearized_combination_d} is integrable in time by \eqref{linearized_u_reg} and \eqref{linearized_ut_reg}, we obtain
\begin{align*}
    &\|\nabla \linomega(T)\|_{L^2}^2 + \|\linomega(T)\|_{L^2}^2 + \frac{1}{\beta} \|\nabla^2 \Theta(T)\|_{L^2}^2 + \frac{1}{\beta}\|\nabla \Theta(T)\|_{L^2}^2  + \frac{2}{\beta} \int_{\partial\Omega} \alpha (\partial_2 \Theta(T))^2
    \\
    &\quad\lesssim \frac{1}{\beta} \|U_0\|_{H^2}^2 + \|\Theta_0\|_{H^2}^2 + \frac{2}{\beta} \int_{\partial\Omega} \alpha (\partial_2\theta_0)^2 + \int_0^\infty (\|U(s)\|_{H^1}^2 + \|U_t(s)\|_{H^1}^2)\ ds
    \\
    &\quad\leq C
\end{align*}
for some constant $C>0$, independent of time, and hence,
\begin{align}
    \Theta \in L^\infty\left((0,\infty);H^2(\Omega)\right).
    \label{linearized_theta_in_H2}
\end{align}
Next we show the decay of $\|U\|_{H^2}$.
By \eqref{linearized_id_A}, \eqref{linearized_pde_u} and \eqref{linearized_nablaTheta_pde}
\begin{align*}
    \nu \|\nabla\linomega\|_{L^2}^2 &= - 2\nu \int_{\partial\Omega} \alpha U\cdot\Delta U - \frac{1}{2} \frac{d}{dt} \left(\|\linomega\|_{L^2}^2+ \frac{1}{\beta}\|\nabla \Theta\|_{L^2}^2 \right)
    \\
    &= - 2\nu \int_{\partial\Omega} \alpha U\cdot \Delta U - \int_\Omega \linomega \linomega_t - \frac{1}{\beta} \int_\Omega \nabla \Theta \cdot\nabla \Theta_t
    \\
    &= - 2\nu \int_{\partial\Omega} \alpha U\cdot(U_t + \nabla P - \Theta e_2) - \int_\Omega \linomega \linomega_t + \int_\Omega \nabla \Theta \cdot\nabla U_2
\end{align*}
and using trace theorem, Lemma \ref{lemma_nablaPcdotU_onBoundary} and Hölder's inequality
\begin{align*}
    \nu \|\nabla\linomega\|_{L^2}^2 \lesssim \|U\|_{H^1} (\|U_t\|_{H^1} + \|P\|_{H^1} + \|\Theta\|_{H^1}).
\end{align*}
By \eqref{linearized_Ut_LinftH1}, \eqref{linearized_p_reg} and \eqref{linearized_theta_in_H2} the bracket is universally bounded in time and as $\|U\|_{H^1}$ decays by \eqref{linearized_u_H1_to_0}, Lemma \ref{preliminaries_lemma_elliptic_regularity} results in
\begin{align}
    \|U(t)\|_{H^2}^2 \lesssim \|\nabla\linomega(t)\|_{L^2}^2 + \|U(t)\|_{H^1}^2 \to 0
    \label{linearized_U_H2_to_0}
\end{align}
for $t\to \infty$.

 By \eqref{linearized_pde_u}
\begin{align*}
    \|\nabla P-\Theta e_2\|_{L^2} = \|U_t - \nu \Delta U\|_{L^2} \leq \|U_t\|_{L^2} + \nu\|U\|_{H^2} \to 0,
\end{align*}
for $T\to \infty$, where in the last limit we used \eqref{linearized_Ut_to_0} and \eqref{linearized_U_H2_to_0}. This proves the desired convergence.

Finally we prove a higher order regularity. 
As $U,U_t \in L^2\left((0,\infty);H^1(\Omega)\right)$ by \eqref{linearized_u_reg} and \eqref{linearized_ut_reg}, integrating \eqref{linearized_combination_d} in time shows that
\begin{align*}
    \nu &\int_0^\infty (\|\Delta \linomega(s)\|_{L^2}^2 + \|\nabla \linomega(s)\|_{L^2}^2)\ ds
    \\
    &\quad\lesssim \frac{1}{\beta} \|U_0\|_{H^2}^2 + \|\Theta_0\|_{H^2}^2 + \frac{2}{\beta} \int_{\partial\Omega} \alpha (\partial_2\theta_0)^2 + \int_0^\infty (\|U(s)\|_{H^1}^2 + \|U_t(s)\|_{H^1}^2)\ ds
    \\
    &\quad<\infty
\end{align*}
and therefore \eqref{linearized_combinatino_H3_estimate} implies
\begin{align}
    U\in L^2\left((0,\infty);H^3(\Omega)\right).
\end{align}

\end{proof}

\section{Appendix}

\subsection{Gradient Estimates}\hypertarget{subsection:GradientEst}{\phantom{a}}

Here we provide key lemmas that enable our analysis. Lemma \ref{lemma_preliminaries_uv} proves an equivalence of norms for the full gradient, strain tensor, and vorticity.

In the following we use the notation
\begin{align*}
    \int \nabla u:\nabla v\ dx &=\int \partial_iu_j\partial_i v_j \ dx
    \\
    \int \D u:\D v\ dx &= \int (\D u)_{ij}(\D v)_{ij} \ dx =\frac{1}{2}\int \partial_i u_j\partial_i v_j+\partial_i u_j\partial_j u_i\ dx.
\end{align*}

\begin{lemma}
\label{lemma_preliminaries_uv}
Assume $\Omega$ is a $C^{1,1}$-domain, $u, v\in H^1(\Omega)$ fulfill \eqref{nonPenetration} and $u$ satisfies \eqref{incompressibility}. Then for $\omega = \nabla^\perp \cdot u$ and $\eta = \nabla^\perp \cdot v$, where $\nabla^\perp = (-\partial_2,\partial_1)$,
\begin{equation}
    \label{lemma_uv_identities_first_id}
    \begin{aligned}
        2\int_\Omega \D u: \D v
        = \int_\Omega \nabla u: \nabla v + \int_{\partial\Omega} \kappa u_\tau v_\tau
        = \int_\Omega \omega \eta + 2 \int_{\partial\Omega} \kappa u_\tau v_\tau.
    \end{aligned}
\end{equation}
If additionally $u\in H^2$ fulfills \eqref{navSlip}
\begin{equation}
    \label{lemma_uv_identities_second_id}
    \begin{aligned}
        -\int_\Omega \Delta u\cdot v
        &= 2\int_\Omega \D u: \D v + 2\int_{\partial\Omega} \alpha u_\tau v_\tau
    \end{aligned}
\end{equation}
and
\begin{equation}
    \label{lemma_uv_identities_third_id}
    \begin{aligned}
        \left|\int_\Omega \Delta u\cdot \Xi\right|
        &\leq C \|u\|_{H^1}\| \Xi\|_{H^1}
    \end{aligned}
\end{equation}
for all $\Xi\in H^1$, where $C=C(\alpha,\Omega)>0$.
\end{lemma}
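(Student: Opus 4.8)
The plan is to prove the three identities one at a time, in each case reducing the claim to a single integration by parts together with elementary boundary geometry. For \eqref{lemma_uv_identities_first_id} and \eqref{lemma_uv_identities_second_id} one may first argue for smooth $u,v$ obeying the stated constraints and then pass to the claimed regularity by density of smooth solenoidal vector fields tangent to $\partial\Omega$; I will suppress this routine approximation step.

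For \eqref{lemma_uv_identities_first_id} I would rely on two purely pointwise identities valid in two dimensions, neither of which uses any constraint: by symmetry of the Frobenius product $2\,\D u\!:\!\D v=\nabla u\!:\!\nabla v+\nabla u\!:\!(\nabla v)^T$, and a direct expansion gives $\nabla u\!:\!\nabla v-\nabla u\!:\!(\nabla v)^T=\omega\eta$. Both equalities in \eqref{lemma_uv_identities_first_id} then follow once one establishes the boundary identity $\int_\Omega\nabla u\!:\!(\nabla v)^T=\int_{\partial\Omega}\kappa\,u_\tau v_\tau$. To obtain this I integrate $\int_\Omega\partial_j u_i\,\partial_i v_j$ by parts in $x_i$: the interior contribution is $-\int_\Omega\partial_j(\nabla\cdot u)\,v_j=0$ by \eqref{incompressibility}, leaving the boundary term $\int_{\partial\Omega}v_j\,n_i\,\partial_j u_i$. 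Writing $n_i\partial_j u_i=\partial_j(u\cdot n)-u_i\partial_j n_i$ and using that on $\partial\Omega$ both $u$ and $v$ are tangential by \eqref{nonPenetration} — so $(v\cdot\nabla)$ acts there as $v_\tau(\tau\cdot\nabla)$ and $(\tau\cdot\nabla)(u\cdot n)=0$ because $u\cdot n\equiv0$ along the boundary — this collapses to $-\int_{\partial\Omega}u_\tau v_\tau\,\tau\!\cdot\!\big((\tau\cdot\nabla)n\big)$. Differentiating $n\cdot\tau=0$ along $\tau$ and invoking the definition \eqref{def_kappa} of curvature gives $\tau\cdot((\tau\cdot\nabla)n)=-\kappa$, which yields the claim.

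For \eqref{lemma_uv_identities_second_id} I would use $\Delta u=2\,\nabla\!\cdot\!\D u$, valid since $\nabla\cdot u=0$, so that $-\int_\Omega\Delta u\cdot v=-2\int_\Omega(\nabla\!\cdot\!\D u)\cdot v=2\int_\Omega\D u\!:\!\nabla v-2\int_{\partial\Omega}(\D u\cdot n)\cdot v$; symmetry of $\D u$ turns the interior term into $2\int_\Omega\D u\!:\!\D v$, while on $\partial\Omega$ one has $v=v_\tau\tau$, hence $(\D u\cdot n)\cdot v=v_\tau(\tau\cdot\D u\cdot n)=-\alpha\,u_\tau v_\tau$ by \eqref{navSlip}, giving the stated boundary term. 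Finally, for \eqref{lemma_uv_identities_third_id} I would use the representation $\Delta u=\nabla^\perp\omega$ from \eqref{nablaOmega_minusDeltaUperp}: integrating by parts, $\int_\Omega\Delta u\cdot\chi=-\int_\Omega\omega\,(\nabla^\perp\!\cdot\!\chi)+\int_{\partial\Omega}\omega\,\chi_\tau$. The interior term is bounded by $\|\omega\|_{L^2}\|\nabla^\perp\!\cdot\!\chi\|_{L^2}\le C\|u\|_{H^1}\|\chi\|_{H^1}$; for the boundary term I insert the vorticity boundary condition \eqref{vorticity_bc}, $\omega=-2(\alpha+\kappa)u_\tau$, and estimate $\big|\int_{\partial\Omega}(\alpha+\kappa)u_\tau\chi_\tau\big|\le\|\alpha+\kappa\|_{L^\infty(\partial\Omega)}\|u_\tau\|_{L^2(\partial\Omega)}\|\chi_\tau\|_{L^2(\partial\Omega)}\le C(\alpha,\Omega)\|u\|_{H^1}\|\chi\|_{H^1}$ by the trace inequality, where $\kappa\in L^\infty(\partial\Omega)$ because $\partial\Omega\in C^{1,1}$ and $\alpha\in L^\infty(\partial\Omega)$.

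I expect the only genuinely delicate step to be the boundary computation in \eqref{lemma_uv_identities_first_id}: recognising the interior term of the integration by parts as $\nabla(\nabla\cdot u)$ rather than a sum of mixed second derivatives seen componentwise, justifying the integration by parts at merely $H^1$ regularity on a $C^{1,1}$ domain (which is exactly what forces the density reduction), and cleanly extracting $\kappa$ from $\tau\cdot((\tau\cdot\nabla)n)$. The remaining manipulations are essentially Green's identities once the symmetric-gradient algebra and the identity $\Delta u=\nabla^\perp\omega$ are in hand.
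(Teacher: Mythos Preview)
Your argument is correct. The first two identities are handled essentially as in the paper: the paper integrates $\int_\Omega\partial_i u_j\partial_j v_i$ by parts and identifies the boundary contribution $\int_{\partial\Omega}n\cdot(v\cdot\nabla)u=\int_{\partial\Omega}\kappa u_\tau v_\tau$ via \eqref{vorticity_bc_derivation_2}, exactly your computation with the roles of $n$ and $\tau$ swapped in the last step. For the second equality in \eqref{lemma_uv_identities_first_id} you use the pointwise two-dimensional identity $\nabla u:\nabla v-\nabla u:(\nabla v)^T=\omega\eta$, which is cleaner than the paper's route: the paper performs a second integration by parts on $\int_\Omega\nabla u:\nabla v$ via $\Delta u=\nabla^\perp\omega$ and then needs a separate boundary calculation \eqref{DuDv_estimate_a_3}. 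Your proof of \eqref{lemma_uv_identities_second_id} through $\Delta u=2\,\nabla\!\cdot\!\D u$ is a minor variant of the paper's direct integration by parts followed by \eqref{lemma_uv_identities_first_id}.

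The real difference is in \eqref{lemma_uv_identities_third_id}. The paper integrates $-\int_\Omega\Delta u\cdot\chi$ by parts in the standard way, which for general $\chi\in H^1$ produces the boundary term $\int_{\partial\Omega}(\chi\cdot n)\,n\cdot(n\cdot\nabla)u$; this involves the trace of $\nabla u$, which one cannot control by $\|u\|_{H^1}$ via the trace theorem alone, so the paper extends $n$ to a $W^{1,4}(\Omega)$ field and converts the boundary integral into a volume integral via the divergence theorem. Your route through $\Delta u=\nabla^\perp\omega$ sidesteps this entirely: after one integration by parts the only boundary term is $\int_{\partial\Omega}\omega\,(\chi\cdot\tau)$, and since $\omega=-2(\alpha+\kappa)u_\tau$ on $\partial\Omega$ by \eqref{vorticity_bc} this is zeroth order in $u$ and is estimated directly by the $L^2$ trace inequality. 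This is a genuinely simpler argument; the price is that it relies on \eqref{vorticity_bc}, so it is specific to the two-dimensional setting and to the Navier-slip condition, whereas the paper's extension argument would adapt more readily to other boundary conditions.
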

\begin{proof}
Assume at first $u\in H^2$. Then
\begin{align}
    \label{DuDv_estimate_1}
    2 \int_\Omega \D u: \D v = \frac{1}{2} \int_\Omega (\nabla u + \nabla u^T) : (\nabla v + \nabla v^T) = \int_\Omega \nabla u : \nabla v + \int_\Omega \nabla u : \nabla v^T.
\end{align}
Using integration by parts the second term in \eqref{DuDv_estimate_1} is given by
\begin{align}
    \label{DuDv_estimate_2}
    \int_\Omega \partial_i u_j \partial_j v_i
    = - \int_\Omega \partial_i\partial_j u_j v_i + \int_{\partial\Omega} n \cdot (v\cdot \nabla) u
    = \int_{\partial\Omega} v_\tau n\cdot (\tau\cdot\nabla) u,
\end{align}
where the last identity is due to $u$ satisfying \eqref{incompressibility} and $v$ satisfying \eqref{nonPenetration}. Combining \eqref{DuDv_estimate_1}, \eqref{DuDv_estimate_2} and \eqref{vorticity_bc_derivation_2} we find
\begin{align*}
    2 \int_\Omega \D u: \D v = \int_\Omega \nabla u : \nabla v + \int_{\partial\Omega} \kappa u_\tau v_\tau,
\end{align*}
which by density also holds for $u\in H^1$, proving the first identity of \eqref{lemma_uv_identities_first_id}.

Similar to the proof of the first identity, integration by parts implies
\begin{align}
    \label{DuDv_estimate_a_1}
    \int_\Omega \nabla u : \nabla v
    = - \int_\Omega v \cdot \Delta u + \int_{\partial\Omega} v \cdot (n\cdot \nabla) u
    = - \int_\Omega v \cdot \nabla^\perp \omega + \int_{\partial\Omega} v_\tau \tau \cdot (n\cdot \nabla) u,
\end{align}
where the last identity is due to \eqref{nablaOmega_minusDeltaUperp} and $v$ satisfies \eqref{nonPenetration}. Using integration by parts again the first term on the right-hand side of \eqref{DuDv_estimate_a_1} is given by
\begin{align}
    \label{DuDv_estimate_a_2}
    -\int_\Omega v\cdot \nabla^\perp \omega
    = \int_\Omega v^\perp \cdot \nabla \omega
    = \int_{\partial\Omega} v^\perp \cdot n \omega - \int_\Omega \nabla \cdot v^\perp \omega
    = - \int_{\partial\Omega} v_\tau \omega + \int_\Omega \eta \omega.
\end{align}
In order to calculate the remaining boundary terms notice that the algebraic identity $\tau_i \tau_j + n_i n_j = \delta_{ij}$, where $\delta_{ij}$ is the Kronecker delta, i.e. $\delta_{ij}=0$ for $i\neq j$ and $\delta_{ij}=1$ for $i=j$, holds and therefore
\begin{equation*}
    \begin{aligned}
        \tau \cdot (n\cdot\nabla) u - \omega
        &= \tau \cdot (n\cdot\nabla) u - \nabla^\perp \cdot u
        = \tau \cdot (n\cdot\nabla) u - \tau \cdot (\tau \cdot \nabla^\perp) u - n \cdot (n \cdot \nabla^\perp) u
        \\
        &= \tau \cdot (n\cdot\nabla) u + \tau \cdot (\tau^\perp \cdot \nabla) u + n \cdot (n^\perp \cdot \nabla) u
        = n \cdot (\tau \cdot \nabla) u = \kappa u_\tau,        
    \end{aligned}
\end{equation*}
where the identity is due to \eqref{vorticity_bc_derivation_2}. Combining \eqref{DuDv_estimate_a_1}, \eqref{DuDv_estimate_a_2} and \eqref{DuDv_estimate_a_2} yields the second identity in \eqref{lemma_uv_identities_first_id}.

To prove \eqref{lemma_uv_identities_second_id}, integration by parts yields
\begin{align}
    \label{DuDv_estimate_b_1}
    -\int_\Omega \Delta u \cdot v = - \int_{\partial\Omega} v \cdot (n \cdot \nabla) u + \int_\Omega \nabla u : \nabla v = - \int_{\partial\Omega} v_\tau \tau \cdot (n \cdot \nabla) u + \int_\Omega \nabla u : \nabla v, 
\end{align}
as $v$ satisfies \eqref{nonPenetration}. The boundary term can be written as
\begin{align}
    \label{DuDv_estimate_b_2}
    -\tau \cdot (n\cdot\nabla) u = - 2 (\D u \ n)\cdot \tau + n \cdot (\tau\cdot \nabla ) u = (2 \alpha + \kappa) u_\tau,
\end{align}
where in the last identity we used \eqref{navSlip} and \eqref{vorticity_bc_derivation_2}. Combining \eqref{DuDv_estimate_b_1}, \eqref{DuDv_estimate_b_2} and \eqref{lemma_uv_identities_first_id} yields the claim.

Next we focus on \eqref{lemma_uv_identities_third_id}. Partial integration again yields
\begin{align*}
    -\int_{\Omega} \Delta u \cdot \Xi = \int_\Omega \nabla u \colon \nabla \Xi - \int_{\partial\Omega} \Xi \cdot (n \cdot \nabla) u
\end{align*}
Note that $n_i n_j + \tau_i \tau_j = \delta_{ij}$ and \eqref{DuDv_estimate_b_2} imply
\begin{equation}
    \label{lemma_uv_third_proof_est1}
    \begin{aligned}
        -\int_{\Omega} \Delta u \cdot \Xi &= \int_\Omega \nabla u \colon \nabla \Xi - \int_{\partial\Omega} (\Xi\cdot \tau) \tau \cdot (n \cdot \nabla) u - \int_{\partial\Omega} (\Xi\cdot n) n \cdot (n \cdot \nabla) u
        \\
        &= \int_\Omega \nabla u \colon \nabla \Xi + \int_{\partial\Omega} (2\alpha+\kappa) \Xi_\tau u_\tau - \int_{\partial\Omega} (\Xi\cdot n) n \cdot (n \cdot \nabla) u.
    \end{aligned}
\end{equation}
In order to estimate the last term on the right-hand side of \eqref{lemma_uv_third_proof_est1} we extend $n$ to the whole space $\Omega$ according to Corollary \ref{corollary_boundary_extension}, i.e. there exists $\eta\in W^{1,4}(\Omega)$
fulfilling
\begin{align}
    \eta\vert_{\partial\Omega} &= n,\\
    \label{lemma_uv_third_proof_eta_bound}
    \|\eta\|_{W^{1,4}} &\leq C \|n\|_{W^{1,\infty}} \leq C (1 + \|\kappa\|_\infty).
\end{align}
Therefore Stokes' theorem implies
\begin{equation}
    \label{lemma_uv_third_proof_est2}
    \begin{aligned}
        \bigg|&\int_{\partial\Omega} (\Xi\cdot n) n \cdot (n \cdot \nabla) u\bigg|
        \\
        &= \left| \int_{\partial\Omega} n \cdot \left( (\Xi\cdot \eta)  (\eta \cdot \nabla) u \right) \right|
        = \left| \int_{\Omega} \nabla \cdot \left( (\Xi\cdot \eta)  (\eta \cdot \nabla) u \right) \right|
        \\
        &\leq \left| \int_{\Omega} \partial_i \Xi_j \eta_j \eta_k \partial_k u_i \right| + \left| \int_{\Omega} \Xi_j \partial_i\eta_j \eta_k \partial_k u_i \right| + \left| \int_{\Omega} \Xi_j \eta_j \partial_i \eta_k \partial_k u_i \right| + \left| \int_{\Omega} \Xi_j \eta_j \eta_k \partial_k \partial_i u_i \right|
        \\
        &\leq \|\eta\|_{L^\infty} \left(\|\eta\|_{L^\infty} \|\nabla\Xi\|_{L^2} \|\nabla u\|_{L^2} + 2 \|\nabla \eta\|_{L^4} \|\Xi\|_{L^4} \|\nabla u\|_{L^2} \right) + \left| \int_{\Omega} (\Xi \cdot \eta) (\eta \cdot \nabla) (\nabla \cdot u) \right|.
    \end{aligned}
\end{equation}
The last term in \eqref{lemma_uv_third_proof_est2} vanishes by \eqref{incompressibility} and the remaining terms can be bounded using Sobolev embedding resulting in
\begin{equation}
    \label{lemma_uv_third_proof_est3}
    \begin{aligned}
        \left|\int_{\partial\Omega} (\Xi\cdot n) n \cdot (n \cdot \nabla) u\right|
        &\leq C\|\eta\|_{W^{1,4}}^2 \|\Xi\|_{H^1} \|\nabla u\|_{L^2} \leq C (1+\|\kappa\|_\infty^2) \|\Xi\|_{H^1} \|\nabla u\|_{L^2},
    \end{aligned}
\end{equation}
where in the last inequality we used \eqref{lemma_uv_third_proof_eta_bound}.
Using Hölder's inequality for the first and trace Theorem and Hölder's inequality for the second term on the right-hand side of \eqref{lemma_uv_third_proof_est1} we find, after plugging in \eqref{lemma_uv_third_proof_est3},
\begin{align*}
    \left|\int_{\Omega} \Delta u \cdot \Xi \right| &\leq \left(1+ \|2\alpha+\kappa\|_{L^\infty} + \|\kappa\|_{L^\infty}^2 \right) C \|\Xi\|_{H^1} \|u\|_{H^1}.
\end{align*}
\end{proof}

Lemma \ref{lemma_coercivity} exploits Lemma \ref{lemma_preliminaries_uv}, showing coercivity for the Laplace operator combined with the Navier-slip boundary conditions due to $\alpha>0$.

\begin{lemma}[Coercivity]
\label{lemma_coercivity}
Let $\Omega$ be a $C^{1,1}$-domain, $\alpha>0$ almost everywhere on $\partial\Omega$ and $u\in H^1$ satisfy \eqref{incompressibility} and \eqref{nonPenetration}. Then there exists a constant $C = C(|\Omega|)>0$ 
such that
\begin{align}
    \|\nabla u\|_{L^2}^2 + \int_{\partial\Omega} \alpha u_\tau^2&\geq \frac{1}{C} \min \left\lbrace 1,\|\alpha^{-1}\|_{L^\infty}^{-1}\right\rbrace \|u\|_{H^1}^2,
    \label{coercitvity_estimate_nabla}
    \\
    \|\D u\|_{L^2}^2 + \int_{\partial\Omega} \alpha u_\tau^2&\geq \frac{1}{C} \min\left\lbrace 1, \|\alpha^{-1}\|_{L^\infty}^{-1}, \|\alpha^{-1}\kappa\|_{L^\infty}^{-1}\right\rbrace \|u\|_{H^1}^2.
    \label{coercitvity_estimate_D}
\end{align}
\end{lemma}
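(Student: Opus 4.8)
The plan is to reduce both inequalities to a single Poincaré-type bound for divergence-free fields with vanishing normal component, and then to insert the weight $\alpha$ — together with, for the strain version, the gradient/strain identity of Lemma~\ref{lemma_preliminaries_uv}. The underlying bound I would first establish is: there is a constant $c_\Omega>0$ such that every $u\in H^1$ satisfying \eqref{incompressibility} and \eqref{nonPenetration} obeys $\|\nabla u\|_{L^2}^2+\|u_\tau\|_{L^2(\partial\Omega)}^2\ge c_\Omega\|u\|_{H^1}^2$. Since $u\cdot n=0$ on $\partial\Omega$, the trace of $u$ equals $u_\tau\tau$, so $\|u_\tau\|_{L^2(\partial\Omega)}=\|u\|_{L^2(\partial\Omega)}$, and this is nothing but the generalized Poincaré inequality $\|u\|_{L^2(\Omega)}^2\le C(\|\nabla u\|_{L^2(\Omega)}^2+\|u\|_{L^2(\partial\Omega)}^2)$ valid on a bounded domain, after adding $\|\nabla u\|_{L^2}^2$ to both sides; it holds with $c_\Omega=c_\Omega(\Omega)$. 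The divergence-free structure is what allows one to track the dependence on $|\Omega|$ alone: writing $u=\nabla^\perp\psi$ with $\psi$ constant on $\partial\Omega$ one has $\|u\|_{L^2}=\|\nabla\psi\|_{L^2}$, $\|\Delta\psi\|_{L^2}\le\sqrt2\,\|\nabla u\|_{L^2}$, and the Dirichlet–Poincaré inequality for $\psi$, whose constant is controlled by the first Dirichlet eigenvalue and hence, by Faber–Krahn in two dimensions, by $|\Omega|$, closes the estimate; for the applications any constant $c_\Omega(\Omega)$ would suffice.

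Given this, estimate \eqref{coercitvity_estimate_nabla} is immediate. Pointwise on $\partial\Omega$ one has $\alpha\ge\|\alpha^{-1}\|_{L^\infty}^{-1}$ (trivially so if $\|\alpha^{-1}\|_{L^\infty}=\infty$, in which case the right-hand side of \eqref{coercitvity_estimate_nabla} vanishes), hence $\int_{\partial\Omega}\alpha u_\tau^2\ge\|\alpha^{-1}\|_{L^\infty}^{-1}\|u_\tau\|_{L^2(\partial\Omega)}^2$, and therefore
\[
\|\nabla u\|_{L^2}^2+\int_{\partial\Omega}\alpha u_\tau^2\ \ge\ \min\{1,\|\alpha^{-1}\|_{L^\infty}^{-1}\}\bigl(\|\nabla u\|_{L^2}^2+\|u_\tau\|_{L^2(\partial\Omega)}^2\bigr)\ \ge\ c_\Omega\min\{1,\|\alpha^{-1}\|_{L^\infty}^{-1}\}\,\|u\|_{H^1}^2 .
\]

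For \eqref{coercitvity_estimate_D} I would start from \eqref{lemma_uv_identities_first_id} with $v=u$, i.e.\ $2\|\D u\|_{L^2}^2=\|\nabla u\|_{L^2}^2+\int_{\partial\Omega}\kappa u_\tau^2$, which rearranges to $\|\D u\|_{L^2}^2+\int_{\partial\Omega}\alpha u_\tau^2=\tfrac12\|\nabla u\|_{L^2}^2+\int_{\partial\Omega}(\alpha+\tfrac12\kappa)u_\tau^2$. Since $|\kappa|\le\|\alpha^{-1}\kappa\|_{L^\infty}\,\alpha$ pointwise on $\partial\Omega$, this yields $\|\D u\|_{L^2}^2+\int_{\partial\Omega}\alpha u_\tau^2\ge\tfrac12\|\nabla u\|_{L^2}^2+\bigl(1-\tfrac12\|\alpha^{-1}\kappa\|_{L^\infty}\bigr)\int_{\partial\Omega}\alpha u_\tau^2$, while $\|\D u\|_{L^2}^2\ge0$ gives $\|\D u\|_{L^2}^2+\int_{\partial\Omega}\alpha u_\tau^2\ge\int_{\partial\Omega}\alpha u_\tau^2$. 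Taking the convex combination of these two inequalities with weight $\theta:=\max\{0,\,1-\|\alpha^{-1}\kappa\|_{L^\infty}^{-1}\}\in[0,1)$ on the second one, a short computation shows
\[
\|\D u\|_{L^2}^2+\int_{\partial\Omega}\alpha u_\tau^2\ \ge\ \tfrac12\min\{1,\|\alpha^{-1}\kappa\|_{L^\infty}^{-1}\}\,\|\nabla u\|_{L^2}^2+\tfrac12\int_{\partial\Omega}\alpha u_\tau^2 .
\]
Bounding the remaining boundary term below by $\tfrac12\|\alpha^{-1}\|_{L^\infty}^{-1}\|u_\tau\|_{L^2(\partial\Omega)}^2$ and applying the Poincaré bound of the first paragraph, the two weights $\tfrac12\min\{1,\|\alpha^{-1}\kappa\|_{L^\infty}^{-1}\}$ and $\tfrac12\|\alpha^{-1}\|_{L^\infty}^{-1}$ merge into $\tfrac12\min\{1,\|\alpha^{-1}\|_{L^\infty}^{-1},\|\alpha^{-1}\kappa\|_{L^\infty}^{-1}\}$, which is \eqref{coercitvity_estimate_D}.

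The main obstacle is the sign-indefinite curvature term $\int_{\partial\Omega}\kappa u_\tau^2$ coming from the strain/gradient identity: when $|\kappa|$ is large compared to $\alpha$ it can be large and negative, so it cannot simply be discarded, and estimating $\int_{\partial\Omega}|\kappa|u_\tau^2$ by a trace inequality would reintroduce $\|u\|_{L^2(\Omega)}^2$ and be circular. The convex-combination step handles exactly this: it keeps a full $\tfrac12$-fraction of $\int_{\partial\Omega}\alpha u_\tau^2$ intact while paying only a factor $\|\alpha^{-1}\kappa\|_{L^\infty}^{-1}$ on the gradient term, which is precisely why this quantity enters the constant in \eqref{coercitvity_estimate_D} but is absent from \eqref{coercitvity_estimate_nabla}. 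The only other point requiring care is arranging the constant in the underlying Poincaré inequality to depend on $|\Omega|$ alone, which is where incompressibility is genuinely used, beyond its role of identifying $u_\tau$ with the full boundary trace of $u$.
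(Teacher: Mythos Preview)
Your proposal is correct and follows essentially the same approach as the paper: establish a Poincar\'e-type bound $\|\nabla u\|_{L^2}^2+\|u_\tau\|_{L^2(\partial\Omega)}^2\ge c_\Omega\|u\|_{H^1}^2$, insert the weight $\alpha$, and for the strain version use the identity $2\|\D u\|_{L^2}^2=\|\nabla u\|_{L^2}^2+\int_{\partial\Omega}\kappa u_\tau^2$ together with $|\kappa|\le\|\alpha^{-1}\kappa\|_{L^\infty}\alpha$.

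The differences are in packaging. For the Poincar\'e step the paper argues directly via the fundamental theorem of calculus along vertical segments joining interior points to $\partial\Omega$, rather than through the stream function and Faber--Krahn; your route is cleaner for isolating the $|\Omega|$-dependence, but note that it needs $\psi$ to vanish on \emph{all} of $\partial\Omega$, which only holds when $\Omega$ is simply connected---for multiply connected $\Omega$ the constants on different boundary components need not agree, so Dirichlet--Poincar\'e does not apply as stated (your remark that any $c_\Omega(\Omega)$ suffices covers this). For the strain-to-gradient step the paper takes the more direct route of writing $\|\nabla u\|_{L^2}^2\le 2\max\{1,\|\alpha^{-1}\kappa\|_{L^\infty}\}\bigl(\|\D u\|_{L^2}^2+\tfrac12\int_{\partial\Omega}\alpha u_\tau^2\bigr)$ and rearranging; this lands on exactly your intermediate inequality $\|\D u\|_{L^2}^2+\int_{\partial\Omega}\alpha u_\tau^2\ge\tfrac12\min\{1,\|\alpha^{-1}\kappa\|_{L^\infty}^{-1}\}\|\nabla u\|_{L^2}^2+\tfrac12\int_{\partial\Omega}\alpha u_\tau^2$ without the convex-combination detour.
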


\begin{proof}
First use the fundamental theorem of calculus, Young's and Hölder's inequality to get
\begin{align}
    \label{coercivity_proof_estimate_u_pointwise}
    |u(x_1,x_2)|^2 &= \left|u(x_1,\tilde x_2)+ \int_{\tilde x_2}^{x_2} \partial_2 u(x_1,z) \ dz\right|^2 \leq 2 u_\tau^2(x_1,\tilde x_2) + 2|x_2-\tilde x_2|\|\partial_2 u\|_{L^2_v(x_1)}^2
\end{align}
for all $x\in \Omega$ and $\tilde x_2$ such that $(x_1,\tilde x_2)\in \partial\Omega$ and the straight line from $(x_1,\tilde x_2)$ to $(x_1,x_2)$ are completely in $\Omega$, where $L^2_v(x_1)$ indicates the $L^2$-norm of the vertical line pieces at $x_1$. In the inequality in \eqref{coercivity_proof_estimate_u_pointwise} we also used $u(x_1,\tilde x_2)=u_\tau(x_1,\tilde x_2)$ as $u\cdot n=0$ on the boundary by \eqref{nonPenetration}. Integrating over $\Omega$ we find
\begin{align}
    \label{poincare_type_estimate}
    \|u\|_{L^2}^2 \leq C \left(\int_{\partial\Omega} u_\tau^2+\|\nabla u\|_{L^2}^2\right) \leq C \max\left\lbrace 1, \| \alpha^{-1}\|_{L^\infty} \right\rbrace \left( \int_{\partial\Omega} \alpha u_\tau^2 + \|\nabla u\|_{L^2}^2 \right)
\end{align}
for some constant $C>0$ depending on $|\Omega|$, proving \eqref{coercitvity_estimate_nabla}. In order to prove \eqref{coercitvity_estimate_D} notice that by Lemma \ref{lemma_preliminaries_uv}
\begin{align*}
    \|\nabla u\|_{L^2}^2 &= 2\|\D u\|_{L^2}^2 - \int_{\partial\Omega} \kappa u_\tau^2 \leq 2\|\D u\|_{L^2}^2 + \|\alpha^{-1}\kappa\|_{L^\infty} \int_{\partial\Omega} \alpha u_\tau^2 
    \\
    &\leq 2\max \left\lbrace 1,  \|\alpha^{-1}\kappa\|_{L^\infty}\right\rbrace \left(\|\D u\|_{L^2}^2+ \frac{1}{2}\int_{\partial\Omega} \alpha u_\tau^2\right),
\end{align*}
which yields
\begin{align*}
    \|\D u\|_{L^2}^2 + \int_{\partial\Omega} \alpha  u_\tau^2 
    &\geq \frac{1}{2}\min\left\lbrace 1, \|\alpha^{-1}\kappa\|_{L^\infty}^{-1}\right\rbrace \|\nabla u\|_{L^2}^2 + \frac{1}{2}\int_{\partial\Omega}\alpha u_\tau^2 
    \\
    &\geq \frac{1}{2}\min\left\lbrace 1, \|\alpha^{-1}\|_{L^\infty}^{-1}, \|\alpha^{-1}\kappa\|_{L^\infty}^{-1}\right\rbrace \left( \|\nabla u\|_{L^2}^2 + \int_{\partial\Omega} u_\tau^2 \right)
    \\
    &\geq \frac{1}{C} \min\left\lbrace 1, \|\alpha^{-1}\|_{L^\infty}^{-1}, \|\alpha^{-1}\kappa\|_{L^\infty}^{-1}\right\rbrace \|u\|_{H^1}^2,
\end{align*}
where in the last inequality we used \eqref{poincare_type_estimate}.
\end{proof}

Using the stream function formulation, Lemma \ref{preliminaries_lemma_elliptic_regularity} extends Lemma \ref{lemma_preliminaries_uv}, allowing us to exchange between the full gradient and the vorticity for higher order Sobolev norms.

\begin{lemma}
\label{preliminaries_lemma_elliptic_regularity}
Let $1<p<\infty$, $k\in \mathbb{N}_0$, $1\leq r\leq \infty$, $\Omega$ be a $C^{k+1,1}$-domain and $u$ satisfy \eqref{incompressibility} and \eqref{nonPenetration} and $\omega=\nabla^\perp \cdot u$. Then there exists a constant $C=C(\Omega,p,k,r)>0$ 
such that
\begin{align}
    \label{ellptic_reg_first_part_inequality}
    \|\nabla u\|_{W^{k,p}} \leq C \left(\|\omega\|_{W^{k,p}} + \|u\|_{L^r}\right).
\end{align}
Additionally for $\alpha\in W^{k+2,\infty}(\partial\Omega)$ and a $C^{k+3,1}$-domain $\Omega$, 
\begin{align}
    \label{ellptic_reg_second_part_inequality}
    \|\nabla u\|_{W^{k+2,p}}\leq C \|\Delta \omega\|_{W^{k,p}}+ C\left(\|\alpha+\kappa\|_{W^{k+2,\infty}(\partial\Omega)} + 1\right) \|u\|_{W^{k+2,p}}.
\end{align}
\end{lemma}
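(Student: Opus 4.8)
The plan is to reduce everything to standard elliptic estimates for the scalar stream function. Since $u$ satisfies \eqref{incompressibility} and \eqref{nonPenetration} on the (say, simply connected, or handle the harmonic part separately on a general domain) domain $\Omega$, there exists a stream function $\psi$ with $u = \nabla^\perp \psi$, $\Delta \psi = \omega$, and $\psi$ constant on each connected component of $\partial\Omega$; normalizing, $\psi$ solves a Dirichlet problem $\Delta\psi = \omega$ with boundary data that is (piecewise) constant. For \eqref{ellptic_reg_first_part_inequality}, I would invoke $W^{2,p}$ (and higher $W^{k+2,p}$) elliptic regularity for the Dirichlet Laplacian on a $C^{k+1,1}$ domain: $\|\psi\|_{W^{k+2,p}} \leq C(\|\omega\|_{W^{k,p}} + \|\psi\|_{W^{k+1,p}})$, then absorb/iterate the lower-order term down to $\|\psi\|_{L^p}$, and finally bound $\|\psi\|_{L^p}$ by $\|u\|_{L^r}$ via a Poincaré-type inequality (using that $\psi$ has a fixed constant on the boundary component, or has mean-zero gradient). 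Since $\nabla u = \nabla\nabla^\perp\psi$, this gives $\|\nabla u\|_{W^{k,p}} \leq \|\psi\|_{W^{k+2,p}} \lesssim \|\omega\|_{W^{k,p}} + \|u\|_{L^r}$.

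For the second inequality \eqref{ellptic_reg_second_part_inequality}, the idea is to apply the same scheme but now to $\omega$ itself, using the vorticity boundary condition \eqref{vorticity_bc}: $\omega = -2(\alpha+\kappa)u_\tau$ on $\partial\Omega$. So $\omega$ solves the Dirichlet problem $\Delta\omega = \Delta\omega$ (trivially) with boundary data $g := -2(\alpha+\kappa)u_\tau$, and elliptic regularity yields $\|\omega\|_{W^{k+2,p}} \leq C\big(\|\Delta\omega\|_{W^{k,p}} + \|g\|_{W^{k+3/2-1/p,p}(\partial\Omega)}\big)$. The boundary norm of $g$ is controlled, via the multiplicative/trace inequalities on $\partial\Omega$, by $\|\alpha+\kappa\|_{W^{k+2,\infty}(\partial\Omega)}\,\|u_\tau\|_{W^{k+3/2-1/p,p}(\partial\Omega)}$, and the trace theorem bounds $\|u_\tau\|_{W^{k+3/2-1/p,p}(\partial\Omega)} \lesssim \|u\|_{W^{k+2,p}(\Omega)}$ (this needs $\partial\Omega \in C^{k+3,1}$ to make sense of the regularity of $\kappa$ and of the trace space, hence the stronger hypothesis). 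Combining with \eqref{ellptic_reg_first_part_inequality} applied with the role of $\omega$ replaced appropriately — namely $\|\nabla u\|_{W^{k+2,p}} \lesssim \|\omega\|_{W^{k+2,p}} + \|u\|_{L^r} \lesssim \|\Delta\omega\|_{W^{k,p}} + (\|\alpha+\kappa\|_{W^{k+2,\infty}(\partial\Omega)}+1)\|u\|_{W^{k+2,p}}$ — gives the claim (absorbing the $\|u\|_{L^r}$ into $\|u\|_{W^{k+2,p}}$).

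The main obstacle I anticipate is bookkeeping the fractional boundary Sobolev norms and the product estimate $\|(\alpha+\kappa)u_\tau\|_{W^{s,p}(\partial\Omega)} \lesssim \|\alpha+\kappa\|_{W^{\lceil s\rceil,\infty}}\|u_\tau\|_{W^{s,p}}$ cleanly, together with making sure the domain-regularity hypotheses ($C^{k+1,1}$ versus $C^{k+3,1}$) exactly match what the elliptic and trace theorems require; one must be careful that $\kappa \in W^{k+2,\infty}(\partial\Omega)$ is genuinely available, which is precisely why $\partial\Omega \in C^{k+3,1}$ is imposed. A secondary point, only relevant on multiply connected domains, is handling the finitely many boundary components: there $\psi$ takes a different (unknown) constant on each component, but these constants are determined by the circulation data and contribute only finite-dimensional, lower-order corrections that are harmless for the estimate. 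Everything else is a routine iteration of classical $L^p$ elliptic regularity.
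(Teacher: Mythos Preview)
Your proposal is correct and follows essentially the same route as the paper: stream-function Dirichlet elliptic regularity for \eqref{ellptic_reg_first_part_inequality}, then Dirichlet elliptic regularity for $\omega$ with boundary data $-2(\alpha+\kappa)u_\tau$ plus trace estimates for \eqref{ellptic_reg_second_part_inequality}. The only slip is the trace exponent: the Dirichlet data for a $W^{k+2,p}$ solution lives in $W^{k+2-1/p,p}(\partial\Omega)$, not $W^{k+3/2-1/p,p}(\partial\Omega)$, and the paper also makes explicit the step you gloss over---controlling the piecewise-constant boundary values of $\psi$ by $\|u\|_{L^r}$ via trace, Poincar\'e, and a Gagliardo--Nirenberg/Young absorption argument.
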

\begin{proof}
Let $\phi$ be the stream function of $u$, i.e. $u=\nabla^\perp \phi$, then taking the curl of $u$ shows $\Delta \phi = \omega$. Additionally $\phi$ is constant along connected components of the boundary as 
\begin{align*}
    \frac{d}{d\lambda}\phi(x_1(\lambda),x_2(\lambda)) = \frac{d}{d\lambda}x(\lambda) \cdot \nabla\phi=\tau\cdot \nabla \phi = \tau^\perp \cdot \nabla^\perp \phi = - n \cdot u = 0,
\end{align*}
where $\lambda$ is the parameterization of $\partial\Omega$ by arc length. Therefore the stream function fulfills
\begin{alignat*}{2}
        \Delta \phi &= \omega &\qquad \text{ in }&\Omega
        \\
        \phi &= \psi_i &\qquad \text{ on }&\Gamma_i,
\end{alignat*}
where $\psi_i$ are constants and $\Gamma_i$ are the connected components of $\partial\Omega$. As $\phi$ is only defined up to a constant we can assume that it has vanishing average to be able to use Poincaré inequality. By elliptic regularity (for details see Remark 2.5.1.2 in \cite{grisvard1985}) $\phi\mapsto (-\Delta \phi,\phi\vert_{\partial\Omega})$ is an isomorphism from $W^{k+2,p}(\Omega)$ onto $W^{k,p}(\Omega)\times W^{k+2-\frac{1}{p},p}(\partial\Omega)$, which combined with the definition of $\phi$ implies
\begin{align}
    \label{elliptic_reg_est1}
    \|\nabla u\|_{W^{k,p}} \leq \|\phi\|_{W^{k+2,p}}\leq C \|\omega\|_{W^{k,p}} + C \|\phi\|_{W^{k+2-\frac{1}{p},p}(\partial\Omega)}.
\end{align}
 In order to estimate the boundary terms note that by trace estimate
\begin{equation}
    \label{elliptic_reg_est2}
    \begin{aligned}
        \|\phi\|_{W^{s,p}(\partial\Omega)}^p
        &= \sum_i \left\| \psi_i \right\|_{W^{s,p}(\Gamma_i)}^p
        = \sum_i \int_{\Gamma_i} |\psi_i|^p
        = \|\phi\|_{L^p(\partial\Omega)}^p
        \\
        &\leq \|\phi\|_{W^{1,p}(\Omega)}^p
        \leq C \|\nabla \phi\|_{L^p(\Omega)}^p
        = C \|u\|_{L^p}^p        
    \end{aligned}
\end{equation}
for all $s\geq 0$, where in the last inequality we used Poincaré inequality and in the last identity the definition of $\phi$. Gagliardo-Nirenberg interpolation and Young's inequality imply
\begin{align}
    \label{elliptic_reg_est3}
    \|u\|_{L^p} \leq C \|\nabla u\|_{L^p}^\rho \|u\|_{L^1}^{1-\rho} + C \|u\|_{L^1} \leq \epsilon \rho C \|\nabla u\|_{L^p} + \left((1-\rho) \epsilon^{-\frac{\rho}{1-\rho}} + 1\right) C \|u\|_{L^1}
\end{align}
for all $\epsilon>0$, where $\rho = \frac{2p-2}{3p-2}$. Combining \eqref{elliptic_reg_est1}, 
\eqref{elliptic_reg_est2}, \eqref{elliptic_reg_est3} we find
\begin{align*}
    \|\nabla u\|_{W^{k,p}} \leq C \|\omega\|_{W^{k,p}} + \epsilon C \|\nabla u\|_{L^p} + C_\epsilon \|u\|_{L^r},
\end{align*}
due to H\"older's inequality applied to the last term.
This estimate yields \eqref{ellptic_reg_first_part_inequality} after choosing $\epsilon$ sufficiently small.

The proof of \eqref{ellptic_reg_second_part_inequality} follows a similar strategy. By elliptic regularity for the vorticity, $\omega \mapsto (-\Delta \omega, \omega\vert_{\partial\Omega})$ is an isomorphism from $W^{k+2,p}(\Omega)$ onto $W^{k,p}(\Omega)\times W^{k+2-\frac{1}{p},p}(\partial\Omega)$, which combined with the boundary condition \eqref{vorticity_bc} yields
\begin{align}
    \label{higher_order_ellpitic_regularity_est_1}
    \|\omega\|_{W^{k+2,p}} \leq C \|\Delta \omega\|_{W^{k,p}} + C\|(\alpha+\kappa)u_\tau\|_{W^{k+2-\frac{1}{p},p}(\partial\Omega)}.
\end{align}
The boundary term can be estimated by Hölder's inequality and the trace theorem (for details see Theorem 1.5.1.2 in \cite{grisvard1985}) by
\begin{equation}
    \label{higher_order_ellpitic_regularity_est_2}
    \begin{aligned}
        \|(\alpha+\kappa) u_\tau\|_{W^{k+2-\frac{1}{p},p}(\partial\Omega)}
        &\leq \|\alpha+\kappa\|_{W^{k+2,\infty}(\partial\Omega)} \|u\|_{W^{k+2,p}(\Omega)}.        
    \end{aligned}
\end{equation}
Combining \eqref{ellptic_reg_first_part_inequality}, 
\eqref{higher_order_ellpitic_regularity_est_1} and \eqref{higher_order_ellpitic_regularity_est_2} we find
\begin{align*}
    \|\nabla u\|_{W^{k+2,p}}
    \leq C \|\Delta \omega\|_{W^{k,p}} + C\left(\|\alpha+\kappa\|_{W^{k+2,\infty}(\partial\Omega)}+1\right) \|u\|_{W^{k+2,p}}.
\end{align*}

\end{proof}

Lemma \ref{lemma_nablaPcdotU_onBoundary} provides estimates for gradients multiplied with vector fields satisfying \eqref{incompressibility} and \eqref{nonPenetration}. While integrals over the whole space would vanish because of orthogonality, they are in general nonzero for boundary integrals. However, the orthogonality provides bounds that require less regularity than trace estimates.

\begin{lemma}
\label{lemma_nablaPcdotU_onBoundary}
For a $C^{1,1}$-domain $\Omega$, $f\in W^{1,\infty}(\partial\Omega)$, $\rho \in H^1(\Omega)$ and $v\in H^1(\Omega)$ satisfying $\nabla \cdot v = 0$ in $\Omega$ and $v\cdot n = 0$ on $\partial\Omega$ there exists a constant $C>0$ depending only on the Lipschitz constant of $\Omega$ such that
\begin{align*}
    \left|\int_{\partial\Omega}  f v \cdot \nabla \rho \right| = \left|\int_{\partial\Omega} \rho \tau\cdot \nabla (f v\cdot \tau) \right| \leq C \|fn\|_{W^{1,\infty}(\partial\Omega)} \|\rho\|_{H^1} \|v\|_{H^1}.
\end{align*}
\end{lemma}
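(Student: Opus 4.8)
The asserted equality is a tangential integration by parts along the closed curve $\partial\Omega$, while the estimate is obtained by turning the boundary integral into a volume integral that only involves $\nabla v$ in $L^2(\Omega)$. First, since $v\cdot n=0$ on $\partial\Omega$, on the boundary $v=v_\tau\tau$, so $fv\cdot\nabla\rho=f v_\tau\,(\tau\cdot\nabla\rho)=\partial_\tau(\rho f v_\tau)-\rho\,\partial_\tau(f v_\tau)$, where $\partial_\tau=\tau\cdot\nabla$ is the tangential derivative. Each connected component $\Gamma_i$ of $\partial\Omega$ is a closed curve, so $\int_{\Gamma_i}\partial_\tau(\rho f v_\tau)=0$ and hence $\int_{\partial\Omega}fv\cdot\nabla\rho=-\int_{\partial\Omega}\rho\,\tau\cdot\nabla(fv\cdot\tau)$; taking absolute values gives the first identity. (These manipulations are carried out for smooth $v,\rho$ and then extended by density, the boundary integral being read through the bulk identity derived next whenever $\rho$ is no smoother than $H^1$.)

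\textbf{Reduction to a volume integral.} Since $\partial\Omega\in C^{1,1}$ we have $n\in W^{1,\infty}(\partial\Omega)$, hence $fn\in W^{1,\infty}(\partial\Omega)$; by Corollary~\ref{corollary_boundary_extension} we extend it to $Z\in W^{1,4}(\Omega)$ with $\|Z\|_{W^{1,4}(\Omega)}\le C\|fn\|_{W^{1,\infty}(\partial\Omega)}$, and set $g:=-Z\cdot v^\perp\in H^1(\Omega)$. On $\partial\Omega$ one has $v^\perp=-v_\tau n$ (because $v=v_\tau\tau$ and $\tau^\perp=-n$), so $g\vert_{\partial\Omega}=-(fn)\cdot v^\perp=f v_\tau$; moreover $\tau\cdot\nabla g=-n\cdot\nabla^\perp g$ on $\partial\Omega$. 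Therefore, using the divergence theorem and $\nabla\cdot\nabla^\perp g=0$,
\[
\int_{\partial\Omega}fv\cdot\nabla\rho
=-\int_{\partial\Omega}\rho\,\tau\cdot\nabla g
=\int_{\partial\Omega}n\cdot(\rho\nabla^\perp g)
=\int_{\Omega}\nabla\rho\cdot\nabla^\perp g .
\]

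\textbf{Estimate.} By Cauchy--Schwarz, $\bigl|\int_{\partial\Omega}fv\cdot\nabla\rho\bigr|\le\|\nabla\rho\|_{L^2}\|\nabla^\perp g\|_{L^2}=\|\nabla\rho\|_{L^2}\|\nabla g\|_{L^2}$. Since $g=-Z\cdot v^\perp$, pointwise $|\nabla g|\le|\nabla Z|\,|v|+|Z|\,|\nabla v|$, and the two-dimensional embeddings $H^1(\Omega)\hookrightarrow L^4(\Omega)$ and $W^{1,4}(\Omega)\hookrightarrow L^\infty(\Omega)$ give $\|\nabla g\|_{L^2}\le C\|Z\|_{W^{1,4}}\|v\|_{H^1}\le C\|fn\|_{W^{1,\infty}(\partial\Omega)}\|v\|_{H^1}$, which yields the claim with $C$ depending only on the (Lipschitz) character of $\Omega$.

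\textbf{Main obstacle.} The delicate point—and the reason the lemma is stated—is that a naive trace estimate of $\int_{\partial\Omega}\rho\,\tau\cdot\nabla(fv_\tau)$ needs $\nabla v$ to possess an $L^2$-trace on $\partial\Omega$, i.e. essentially $v\in H^{3/2}(\Omega)$, which is not available. It is precisely the combination of $v\cdot n=0$ (giving $v^\perp=-v_\tau n$, hence a \emph{bulk} representative $g$ of $fv_\tau$) and $\nabla\cdot v=0$ (entering through $\nabla\cdot\nabla^\perp g=0$) that allows one to trade the boundary integral for the pairing $\int_\Omega\nabla\rho\cdot\nabla^\perp g$, finite for $v,\rho\in H^1$; the rest is the density bookkeeping and the routine product/Sobolev estimates above.
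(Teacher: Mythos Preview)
Your proof is correct but follows a genuinely different route from the paper. The paper, after the same tangential integration by parts, expands $\int_{\partial\Omega}\rho\,\partial_\tau(fv_\tau)=\int_{\partial\Omega}\rho v\cdot\nabla f+\int_{\partial\Omega}f\rho\,\partial_\tau v_\tau$, treats the first piece by trace estimates, and for the second uses the identity $\partial_\tau v_\tau=-n\cdot(n\cdot\nabla)v$ (which consumes $\nabla\cdot v=0$) together with the extension $\zeta$ of $fn$ to convert $\int_{\partial\Omega}f\rho\,n\cdot(n\cdot\nabla)v$ into the divergence of $\rho(\zeta\cdot\nabla)v$; incompressibility is used a second time to kill the term $\int_\Omega\rho(\zeta\cdot\nabla)(\nabla\cdot v)$. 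Your argument is more economical: you build a single bulk function $g=-Z\cdot v^\perp$ whose trace is exactly $fv_\tau$, and then exploit $\nabla\cdot\nabla^\perp g=0$ to reduce directly to $\int_\Omega\nabla\rho\cdot\nabla^\perp g$, which is estimated by one application of Cauchy--Schwarz.

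One remark on your commentary: your claim that ``$\nabla\cdot v=0$ enters through $\nabla\cdot\nabla^\perp g=0$'' is a misattribution. The identity $\nabla\cdot\nabla^\perp g=0$ holds for \emph{any} scalar $g$; your argument nowhere uses the incompressibility of $v$ (only $v\cdot n=0$). So your proof in fact establishes the lemma without the hypothesis $\nabla\cdot v=0$, which is a small strengthening over the paper's statement. This is harmless for the applications in the paper, where $v$ is always divergence-free, but you should correct the explanatory sentence so it does not suggest a dependence that is not there.
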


\begin{proof}
First, as connected components of the boundary are closed curves and $\tau \cdot \nabla$ is the tangential derivative along $\partial\Omega$ parameterized by arc-length, we find the integration by parts formula
\begin{align*}
    \int_{\partial\Omega} f v \cdot \nabla \rho
    = \int_{\partial\Omega} f v_\tau \tau \cdot \nabla \rho
    = \int_{\partial\Omega} \tau \cdot \nabla \left(f v_\tau \rho\right) - \int_{\partial\Omega} \rho \tau\cdot \nabla (f v_\tau)
    = - \int_{\partial\Omega} \rho \tau\cdot \nabla (f v_\tau),
\end{align*}
where $v_\tau = v\cdot \tau$. Using product rule and $v\cdot n=0$
\begin{align}
    \label{proof_nablaPcdotU_onBoundary_id_1}
    \int_{\partial\Omega} \rho \tau \cdot \nabla \left(f v_\tau \right) = \int_{\partial\Omega} \rho v \cdot \nabla f + \int_{\partial\Omega} f \rho \tau \cdot \nabla v_\tau.
\end{align}
The first term on the right-hand side of \eqref{proof_nablaPcdotU_onBoundary_id_1} can be bounded using Hölder's inequality and the trace theorem as
\begin{align*}
    \left|\int_{\partial\Omega} \rho v \cdot \nabla f\right|\leq C \|f\|_{W^{1,\infty}(\partial\Omega)} \|\rho\|_{H^1}\|v\|_{H^1} \leq C \|fn\|_{W^{1,\infty}(\partial\Omega)} \|\rho\|_{H^1}\|v\|_{H^1}.
\end{align*}
In order to estimate the second term on the right-hand side of \eqref{proof_nablaPcdotU_onBoundary_id_1} notice that since $W^{1,\infty}(\partial\Omega)\subset W^{1-\frac{1}{p},p}(\partial\Omega)$ 
for all $1<p< \infty$ we are able to extend $f n \in W^{1,\infty}(\partial\Omega)$ to $\eta \in W^{1,p}(\Omega)$ as described in Corollary \ref{corollary_boundary_extension}, i.e.
\begin{align}
    \label{proof_nablaPcdotU_onBoundary_id_2}
    \eta\vert_{\partial\Omega} = fn, \qquad \|\eta\|_{W^{1,p}(\Omega)} \leq C \|f n\|_{W^{1,\infty}(\partial\Omega)}.
\end{align}
Since $\tau \cdot (\tau \cdot \nabla) \tau = \frac{1}{2}\tau \cdot \nabla (\tau\cdot \tau) = 0$
\begin{align*}
    \tau\cdot \nabla v_\tau &= \tau \cdot \nabla (v\cdot \tau) = v\cdot (\tau \cdot \nabla) \tau + \tau \cdot (\tau \cdot \nabla) v =  v_\tau \tau \cdot (\tau \cdot \nabla) \tau + \tau \cdot (\tau \cdot \nabla) v
    \\
    &= \tau \cdot (\tau \cdot \nabla) v,
\end{align*}
which combined with $\tau_i \tau_j+ n_i n_j = \delta_{ij}$ implies
\begin{align}
    \label{proof_nablaPcdotU_onBoundary_id_3}
    \tau\cdot \nabla v_\tau = \tau_i \tau_j \partial_i v_j = \delta_{ij} \partial_i v_j - n_i n_j \partial_i v_j = \nabla \cdot v - n \cdot (n \cdot \nabla) v = n \cdot (n \cdot \nabla) v,
\end{align}
where in the last identity we used the divergence free assumption on $v$. Using \eqref{proof_nablaPcdotU_onBoundary_id_2} and \eqref{proof_nablaPcdotU_onBoundary_id_3} and Stokes' theorem we can estimate the second term on the right-hand side of \eqref{proof_nablaPcdotU_onBoundary_id_1} as
\begin{equation}
    \label{proof_nablaPcdotU_onBoundary_id_4}
    \begin{aligned}
        \int_{\partial\Omega} f \rho \tau \cdot \nabla v_\tau
        &= - \int_{\partial\Omega} f \rho n \cdot (n \cdot \nabla) v
        = - \int_{\partial\Omega} \rho n \cdot (\eta \cdot \nabla) v
        = - \int_{\Omega} \nabla \cdot (\rho (\eta \cdot \nabla) v)
        \\
        &= - \int_{\Omega} \nabla \rho \cdot (\eta \cdot \nabla) v - \int_{\Omega} \rho \partial_i \eta_j \partial_j v_i - \int_{\Omega} \rho (\eta \cdot \nabla) (\nabla \cdot v)
    \end{aligned}
\end{equation}
The last term on the right-hand side of \eqref{proof_nablaPcdotU_onBoundary_id_4} vanishes by the divergence free assumption on $v$ and for the other terms we use Hölder's inequality, the Sobolev embedding theorem, and \eqref{proof_nablaPcdotU_onBoundary_id_2} to find
\begin{align*}
    \left| \int_{\partial\Omega} f \rho \tau \cdot \nabla v_\tau \right|
    &\leq C \|\eta\|_{L^\infty} \|\rho\|_{H^1} \|v\|_{H^1} + C\|\eta\|_{W^{1,4}} \|\rho\|_{L^4} \|v\|_{H^1}
    \\
    &\leq C \|\eta\|_{W^{1,4}} \|\rho\|_{H^1} \|v\|_{H^1}
    \\
    &\leq C \| f n\|_{W^{1,\infty}(\partial\Omega)} \|\rho\|_{H^1} \|v\|_{H^1}.
\end{align*}

\end{proof}

\subsection{Technical Lemmas}\hypertarget{subsection:technicalLemmas}{\phantom{a}}

The following Lemma is a slight modification of Lemma 3.1 in \cite{doeringWuZhaoZhen2018}. For details see Lemma 49 in \cite{bleitner2024Buoyancy}.
\begin{lemma}
\label{lemma_L1_Nonnegative_UniformlyCont_Implies_Decay}
Assume $f\in L^1(0,\infty)$ is a non negative that satisfies $f'\leq C$ for a constant $C$ and all $t\geq 0$. Then
\begin{align*}
    f(t)\to 0 \qquad \text{ for } t\to\infty.
\end{align*}
\end{lemma}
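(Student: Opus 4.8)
The plan is to argue by contradiction, extracting from a hypothetical failure of decay a ``mass'' in $f$ that is spread over infinitely many well-separated intervals, each carrying a fixed amount, thereby violating $f\in L^1(0,\infty)$.

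Concretely, I would first suppose that $f(t)\not\to 0$ as $t\to\infty$. Then there exist $\varepsilon>0$ and a sequence $t_n\to\infty$ with $f(t_n)\ge\varepsilon$ for all $n$. Next I would invoke uniform continuity: there is $\delta>0$ such that $|f(t)-f(s)|<\varepsilon/2$ whenever $|t-s|\le\delta$. Combining these, for every $n$ one has $f(t)\ge\varepsilon/2$ for all $t\in[t_n-\delta,t_n+\delta]$ (and since $t_n\to\infty$ we may assume $t_n>\delta$, so these intervals lie in $(0,\infty)$).

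Then I would pass to a subsequence, still denoted $t_n$, for which the intervals $I_n:=[t_n-\delta,t_n+\delta]$ are pairwise disjoint; this is possible because $t_n\to\infty$, so one can greedily select indices with $t_{n_{k+1}}>t_{n_k}+2\delta$. Finally, monotonicity of the integral gives
\begin{align}
    \int_0^\infty f(t)\,dt \;\ge\; \sum_{k} \int_{I_{n_k}} f(t)\,dt \;\ge\; \sum_{k} 2\delta\cdot\frac{\varepsilon}{2} \;=\; \sum_{k}\delta\varepsilon \;=\;+\infty,
\end{align}
contradicting $f\in L^1(0,\infty)$. Hence $f(t)\to0$ as $t\to\infty$.

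There is essentially no serious obstacle here; the only point requiring a little care is the selection of a disjoint subsequence of the intervals $I_n$, which must be done after fixing $\delta$ (so that the interval length is uniform) and uses only that $t_n\to\infty$. Everything else is a direct application of the definitions of uniform continuity and of the $L^1$ norm together with countable additivity/monotonicity of the Lebesgue integral.
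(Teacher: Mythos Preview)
Your argument is correct and is the classical proof of this well-known fact; the only thing to note is that the paper itself does not supply a proof but merely records the statement and cites Lemma~3.1 of \cite{doeringWuZhaoZhen2018}, so there is no in-paper argument to compare against.
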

The following Corollary is an immediate consequence of Theorem 1.5.1.2 in \cite{grisvard1985} with $l=k=0$ and $s=1$.
\begin{corollary}
\label{corollary_boundary_extension}
Let $\Omega$ be a $C^{1,1}$-domain and $1<p<\infty$. Then for every $g\in W^{1-\frac{1}{p},p}(\partial\Omega)$ there exists $f\in W^{1,p}(\Omega)$ with
\begin{align*}
    f\vert_{\partial\Omega} = g\qquad \text{ and }\qquad \|f\|_{W^{1,p}(\Omega)}\leq C \|g\|_{W^{1-\frac{1}{p},p}(\partial\Omega)}.
\end{align*}
\end{corollary}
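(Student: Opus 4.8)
The plan is to read the corollary directly off the preceding lemma, which supplies a bounded linear right inverse of the trace operator. I would let $T\colon W^{1,p}(\Omega)\to W^{1-\frac{1}{p},p}(\partial\Omega)$ denote the trace map $f\mapsto f\vert_{\partial\Omega}$; by the lemma, $T$ is bounded and surjective and possesses a bounded linear right inverse $E\colon W^{1-\frac{1}{p},p}(\partial\Omega)\to W^{1,p}(\Omega)$, meaning $T\circ E=\mathrm{id}$ on $W^{1-\frac{1}{p},p}(\partial\Omega)$, and $E$ may in fact be taken independent of $p$.

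Given $g\in W^{1-\frac{1}{p},p}(\partial\Omega)$, I would simply set $f:=E g\in W^{1,p}(\Omega)$. Then $f\vert_{\partial\Omega}=T f=T E g=g$, which is the first claimed identity, and the boundedness of $E$ yields $\|f\|_{W^{1,p}(\Omega)}=\|E g\|_{W^{1,p}(\Omega)}\le \|E\|\,\|g\|_{W^{1-\frac{1}{p},p}(\partial\Omega)}$, so the constant $c:=\|E\|$ (the operator norm of the extension operator, depending only on $\Omega$ and $p$) does the job for the second claim.

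I do not expect any genuine obstacle here: the corollary is just the qualitative ``there exists a norm-controlled extension of $g$'' reformulation of the quantitative ``the trace operator admits a bounded right inverse'' assertion of the lemma, obtained by evaluating that right inverse at the prescribed boundary datum $g$. The only points worth a sentence of care are recording that the right inverse is \emph{linear} (so that $\|E\|$ is a genuine operator norm and hence finite, giving a single constant $c$ valid for all $g$) and remarking, if desired, that by the final clause of the lemma $E$ — and therefore $c$ — can be chosen independently of $p$.
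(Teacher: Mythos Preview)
Your proposal is correct and matches the paper's approach: the paper simply declares the corollary to be ``an immediate consequence'' of the preceding lemma, and your write-up spells out exactly that immediate consequence by applying the bounded right inverse $E$ to the given boundary datum $g$.
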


\section*{Acknowledgments}

E.C. acknowledges and respects the Lekwungen peoples on whose traditional territory the University of Victoria stands, and the Songhees, Esquimalt and WSÁNEĆ peoples whose historical relationships with the land continue to this day.
C.N. thanks Christian Seis for useful comments on a previous version of the paper.

\paragraph{Funding}

F.B. was supported by DFG-GRK-2583.
The research of E.C. was supported by the Pacific Institute for the Mathematical Sciences (PIMS). The research and findings may not reflect those of the Institute. E.C. was also supported in part by the Department of Defense Vannevar Bush Faculty Fellowship, under ONR award N00014-22-1-2790.
C.N. was partially supported by DFG-TRR181 and GRK-2583.

\printbibliography
\end{document}